\theoremstyle{plain}
\newtheorem{theorem}{Theorem}[section]
\newtheorem{prop}[theorem]{Proposition}
\newtheorem{lemma}[theorem]{Lemma}
\newtheorem{notation}[theorem]{Notation}
\newtheorem{cor}[theorem]{Corollary}
\theoremstyle{definition}
\newtheorem{defin}[theorem]{Definition}
\theoremstyle{remark}
\newtheorem{remark}[theorem]{Remark}
\newcommand{\PP}{\mathcal{P}}
\newcommand{\XX}{\mathscr{X}}
\newcommand{\Z}{\mathbf{Z}}
 \DeclareFontFamily{U}{wncy}{}
 \DeclareFontShape{U}{wncy}{m}{n}{<->wncyr10}{}
 \DeclareSymbolFont{mcy}{U}{wncy}{m}{n}
 \DeclareMathSymbol{\Sh}{\mathord}{mcy}{"58} 
\begin{document}

\title[Local-global principles]
{Local-global principles for norm one tori over semi-global fields}

\author[]{ Sumit Chandra Mishra }
\address{Department of Mathematics, 
Emory University, 
400 Dowman Drive NE, 
Atlanta, GA 30322, USA}
\email{sumitcmishra$@$gmail.com}

\date{}

\begin{abstract}
Let $K$ be a complete discretely valued field with 
the residue field $\kappa$.
Let $F$ be the function field of a smooth, projective, 
geometrically integral curve over $K$ 
and $\XX$ be a regular proper model of $F$ such that 
the reduced special fibre $X$ is a union of regular curves 
with normal crossings.
Suppose that the graph associated to 
$\XX$ is a tree (e.g. $F = K(t)$).
Let $L/F$ be a Galois extension of degree $n$ such that 
$n$ is coprime to char$(\kappa)$. 
Suppose that $\kappa$ is an algebraically closed field or 
a finite field  containing a primitive $n^{\rm th}$ root of unity. 
Then we show that the local-global principle holds for the 
norm one torus associated to the extension $L/F$ 
with respect to discrete valuations on $F$ i.e. 
an element in $F^{\times}$ is a norm 
from the extension $L/F$ if and only if 
it is a norm from the 
extensions $L\otimes_F F_\nu/F_\nu$ 
for all discrete valuations $\nu$ of $F$.
\end{abstract}
 
\maketitle

\section{Introduction}

Let $F$ be a field and $\Omega_F$ be 
the set of all discrete valuations on $F$.
For $\nu \in \Omega_F,$ let $F_\nu$ 
denote the completion of $F$ at $\nu$.
Let $G$ be a linear algebraic group over $F$.
One says that the {\it local-global principle} holds 
for $G$ if 
for any $G$-torsor $X$,  $X$ has a rational point over 
$F$ if and only if it has a rational point over 
$F_\nu$ for all $\nu \in \Omega_F$. 
If $F$ is a number field, we also consider 
the completions at archimedean places 
while discussing local-global
principles for algebraic groups. 
If $F$ is a number field, then it is known that 
the local-global principle holds for various  classes of 
linear algebraic groups (\cite[Chapter 6]{PRAGNT}), 
including semisimple simply connected groups.
%as well as obstruction to the local-global principle 
%are well understood in terms of Brauer Manin obstruction.
In particular, it is well-known 
that if $T_{L/F}$ is the norm one torus 
associated to a cyclic extension $L/F,$ 
then the local-global principle holds for 
$T_{L/F}$ i.e. an element $\lambda \in F^{\times}$ 
is a norm from the extension $L/F$ if and only if 
$\lambda$ is a norm from $F\otimes_F F_\nu/F_\nu$ 
for all $\nu \in \Omega_F$ (\cite[Chapter 11]{CFANT}). 
However, very little is known for general fields.

Let $K$ be a complete discretely valued field 
with residue field $\kappa.$ 
Let $F$ be the function field of a smooth, projective, 
geometrically integral curve over $K.$ 
Such a field $F$ is called semi-global field.
Let $G$ be a linear algebraic group over $F$. 
Harbater, Hartmann and Krashen (\cite{HHK1}) 
developed patching techniques to study 
$G$-torsors over $F$ and proved 
that if $G$ is connected and $F$-rational, 
then a $G$-torsor over $F$ 
has a rational point over $F$ if and only if it has a 
rational point over certain overfields of $F$ 
which are defined using patching (see Subsection \ref{prelim_patching}). 
As a consequence of this result, 
Colliot-Th\'el\`ene, Parimala and Suresh 
(\cite[Theorem 4.3.]{CTPS}) showed that if 
$G$ is reductive, $F$-rational and 
defined over the ring of integers of $K,$ 
then the local-global principle holds for $G$. 
Similar local-global principles are proved for 
various linear algebraic groups $G$ over $F$ if 
the residue field of $K$ is either finite or 
algebraically closed field (\cite{CTPS}, 
\cite{CTPS1},  \cite{Hu}, \cite{P}, \cite{PPS}). 

The first example of a linear algebraic group $G$ over $F$ 
where such a local-global principle fails was given by 
Colliot-Th\'el\`ene, Parimala and Suresh 
(\cite[Section 3.1. \& Proposition 5.9.]{CTPS1}). 
In their example, the residue field of $K$ is the field of 
complex numbers, $G$ is the norm one torus of a 
Galois extension $L/F$ with Galois group 
$\Z/2\Z \times \Z/2\Z$ and the field $F$ has a 
regular proper model with the associated graph not a tree.
Suppose that $F$ has a regular proper model 
with the associated graph a tree.
If $L/F$ is a Galois extension with Galois group 
$\Z/2\Z \times \Z/2\Z$ and $\kappa$ is algebraically closed, 
then they also proved that the local-global principle holds for 
the norm one torus $T_{L/F}$ 
(\cite[Section 3.1. \& Corollary 6.2.]{CTPS1}).\\

The main aim of this paper is to 
 prove the following theorem 
 (see Corollary \ref{sha_m_local}):
 
 \vspace{0.1in}
 
\begin{theorem} 
 Let $K$ be a complete discretely valued field with 
 residue field $\kappa$ and $F$ be the function field 
 of a smooth, projective, geometrically integral 
 curve over $K.$
Let $\XX$ be a regular proper model of $F$ with 
reduced special fibre $X$ a union of regular curves 
with normal crossings. 
 Let $L/F$ be a Galois extension over $F$ of degree $n$ 
 with Galois group $G$. Suppose that the graph associated to $\XX$ is a tree 
 and $\kappa$ is one of the following:\\
 
 $\bullet$ $\kappa$ is an algebraically closed field of characteristic 
 coprime to $n$, or\\
 
 $\bullet$ $\kappa$ is a finite field of characteristic 
 coprime to $n$ and contains a primitive $n^{\rm th}$ root of unity.\\

Then the local-global principle holds for the norm 
 one torus $T_{L/F}$ i.e. an element $\lambda \in F$ 
 is a norm from the extension $L/F$ if and only if $\lambda$ is a 
 norm from the extensions $L \otimes_F F_\nu/F_\nu$ 
 for all $\nu \in \Omega_F$. 
 \end{theorem}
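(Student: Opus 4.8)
The plan is to translate the statement into the vanishing of a Tate--Shafarevich group and to study it by patching. Write $T=R^{(1)}_{L/F}(\Gm)$ for the norm one torus, which sits in the exact sequence $1\to T\to R_{L/F}\Gm\xrightarrow{N}\Gm\to 1$. As $R_{L/F}\Gm$ is a quasi-trivial torus, Hilbert~90 and Shapiro's lemma give, for every field extension $E/F$, a canonical isomorphism $H^1(E,T)\cong E^{*}/N_{L_E/E}(L_E^{*})$ with $L_E=L\otimes_F E$; moreover $H^1(E,T)$ is killed by $n$, since $N_{L_E/E}(a)=a^{n}$ for $a\in E^{*}$. Hence the asserted local--global principle is exactly the assertion that
\[
\Sh^1(F,T):=\ker\Bigl(H^1(F,T)\longrightarrow\prod_{\nu\in\Omega_F}H^1(F_\nu,T)\Bigr)=0,
\]
and, everything in sight being $n$-torsion, one may (using $\mu_n\subset F$) recast the local computations in terms of \'etale cohomology with $\mu_n^{\otimes\bullet}$-coefficients.

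Next I would localise on the model. Enlarging the distinguished finite set of closed points so that it contains every point where distinct components of $X$ meet, plus at least one point on each component, produces the usual patching data: irreducible affine opens $U_i$ of the components of $X$, the chosen closed points $P$, and the branches $\wp$ of $X$ at the points $P$, with overfields $F_{U_i}$, $F_P$, $F_\wp$. A Mayer--Vietoris-type sequence for the connected group $T$ over this data, in the style of Harbater--Hartmann--Krashen, yields exactness of
\[
\prod_{P}T(F_P)\times\prod_i T(F_{U_i})\ \xrightarrow{\alpha}\ \prod_{\wp}T(F_\wp)\ \xrightarrow{\partial}\ H^1(F,T)\ \longrightarrow\ \prod_{P}H^1(F_P,T)\times\prod_i H^1(F_{U_i},T).
\]
It therefore suffices to prove, for a class $[\lambda]\in\Sh^1(F,T)$: \emph{(A)} that $[\lambda]$ dies in every $H^1(F_P,T)$ and every $H^1(F_{U_i},T)$, so that $[\lambda]\in\operatorname{image}(\partial)$; and \emph{(B)} that $\operatorname{image}(\partial)=\operatorname{coker}(\alpha)$ vanishes when the graph associated to $\XX$ is a tree. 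For \emph{(A)}, note that the completion $F_\eta$ of $F_{U_i}$ at the generic point $\eta$ of its component is one of the fields $F_\nu$, and that being a norm is preserved under field extension; so $[\lambda]$ already dies in every $H^1(F_\wp,T)$, and what is left is a purely local local--global principle over the two-dimensional regular local rings $\widehat{\mathcal O}_{\XX,P}$ and the analogous rings attached to the $U_i$ (whose residue fields, under our hypotheses, are function fields of curves or local fields of small cohomological dimension). I would prove this by resolving $T$ so as to express the relevant cohomology groups through $\mu_n^{\otimes\bullet}$-coefficient cohomology, and then invoking purity together with Kato-type reciprocity for the local fields involved; here is where the hypotheses that $\kappa$ be finite or algebraically closed, that $n$ be prime to $\operatorname{char}\kappa$, and that $\mu_n$ be available are all consumed, and I expect this to be the technical heart of the proof. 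The $\kappa$ algebraically closed and $\kappa$ finite cases should differ only in the cohomological-degree bookkeeping at this point (cf.\ \ref{algclosed}, \ref{sha_m_local}).

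For \emph{(B)}, the exact sequence identifies $\operatorname{image}(\partial)$ with the cokernel of the incidence map $\bigoplus_v T(F_v)\to\bigoplus_\wp T(F_\wp)$ attached to the reduction graph $\Gamma$ of $\XX$ (vertices the $U_i$ and the $P$, edges the branches $\wp$; this is a tree precisely when the graph associated to $\XX$ is), i.e.\ with the degree-one cohomology of the diagram of abelian groups $v\mapsto T(F_v)$, $\wp\mapsto T(F_\wp)$ on $\Gamma$. When $\Gamma$ is a tree this cohomology is computed by a two-term complex, and its vanishing reduces --- peeling leaves off $\Gamma$ one vertex at a time --- to the surjectivity, up to $R$-equivalence, of the restriction maps $T(F_{U_i})\times T(F_P)\to T(F_\wp)$; this surjectivity is exactly what the factorisation theory over patching fields of Harbater--Hartmann--Krashen and Colliot-Th\'el\`ene--Parimala--Suresh provides, using the structure of norm one tori over the complete discretely valued fields $F_\wp$ under our hypotheses. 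It is precisely this step that fails for the $\Z/2\Z\times\Z/2\Z$-example of Colliot-Th\'el\`ene--Parimala--Suresh, whose graph contains a loop. Combining \emph{(A)}, \emph{(B)} and the Mayer--Vietoris sequence gives $\Sh^1(F,T)=0$. The principal obstacle is \emph{(A)}: choosing the right resolution of the (in general non-rational) torus $T$ and proving the two-dimensional local local--global principle with the correct coefficients; once that is in hand, the tree hypothesis enters only through the elementary vanishing of the first cohomology of a diagram supported on a contractible graph.
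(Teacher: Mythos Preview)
Your overall architecture matches the paper's: recast the statement as $\Sh(F,T_{L/F})=0$, compare $\Sh$ with the patching obstruction $\Sh_X$, and kill the latter using the tree hypothesis via the HHK double coset description. Two points, however, deserve correction.

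First, you have misplaced where the hypothesis on $\kappa$ is consumed. Step~\emph{(A)}, showing that a class in $\Sh(F,T)$ already lies in $\Sh_X(F,T)$, requires only that $n$ be coprime to $\operatorname{char}(\kappa)$. The paper handles it (Theorem~\ref{union_of_Sha_X_equals_Sha_dvr}) by first blowing up $\XX$ so that $\operatorname{supp}_\XX(\lambda)\cup\operatorname{ram}_\XX(L/F)\cup X$ has normal crossings (a step you omit, and which is why merely enlarging $\PP$ is not enough), and then proving a direct two-dimensional local statement (Theorem~\ref{2dim-norms}) via the explicit structure of Galois extensions of complete regular local rings of dimension two (\S4), not via purity or Kato reciprocity. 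The hypothesis that $\kappa$ be finite or algebraically closed is used entirely in step~\emph{(B)}.

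Second, and more seriously, your step~\emph{(B)} has a gap. ``Surjectivity up to $R$-equivalence of $T(F_{U_i})\times T(F_P)\to T(F_\wp)$'' for each branch does not by itself force the global incidence map to be surjective, even on a tree: the $R$-trivial ambiguity at one branch need not be compatible with choices at adjacent branches. The paper's mechanism is sharper. It proves (Lemma~\ref{branch}, resting on \S3) that for every branch field one has
\[
T_{L/F}(F_{P,U}) \;=\; RT_{L/F}(F_{P,U})\cdot\langle\rho\rangle,
\]
where $\rho\in T_{L/F}(F)$ is the fixed primitive $n^{\rm th}$ root of unity; this is precisely where the hypothesis on $\kappa$ enters. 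The two factors are then dispatched separately. An $R$-trivial element $\prod_\sigma \sigma(a_\sigma)a_\sigma^{-1}$ factors through the branch because each $a_\sigma\in(L\otimes_F F_{P,U})^*$ can be written as $b_{\sigma,U}\,b_{\sigma,P}$ using HHK factorisation for the \emph{rational} group $R_{L/F}\Gm$; no tree hypothesis is needed here. The $\langle\rho\rangle$ part is a tuple of powers of a single global element, and the tree hypothesis (Corollary~\ref{tree}) lets one solve $j_{P,U}=j_P+j_U$ in the exponents. Your formulation does not isolate the decisive fact that the quotient $T(F_\wp)/RT(F_\wp)$ is generated by an element already defined over $F$; without it the leaf-peeling induction does not close.
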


\vspace{0.1in}
 For a finite separable extension $L/F,$ 
 let $T_{L/F}$ denote the norm one 
 torus associated to $L/F$. 
 For any extension $N/F,$ let $RT_{L/F}(N)$ be the subgroup of 
 $T_{L/F}(N)$ consisting of $R$-trivial elements (see Subsection \ref{Requivalence}). 
 The above theorem follows from the following more 
 general theorem (\ref{dvrsha}), where we allow more 
 general residue fields $\kappa$:
 
 \vspace{0.1in}

\begin{theorem} 
\label{main_theorem}
 Let $K$ be a complete discretely valued field with 
 residue field $\kappa $ and $F$ be 
 the function field of a smooth, projective, 
geometrically integral curve over $K.$ 
Let $\XX_0$ be a regular proper model of $F$ with 
 reduced special fibre $X_0$  a union of regular curves with normal crossings. 
 Let $L/F$ be a Galois extension over $F$ of degree $n.$ 
 Suppose that the graph associated to $\XX_0$ is a tree and:\\
 
 $\bullet$ char$(\kappa)$ is coprime to $n,$ \\
 
 $\bullet$ $\kappa$ contains a primitive $n^{\rm th}$ root of unity $\rho,$ and \\
 
 $\bullet$ for all finite extensions $\kappa '/ \kappa $ and 
 for all finite Galois extensions $l/\kappa '$ of degree $d$ dividing $n,$ 
 $$T_{l/\kappa '}(\kappa ') = RT_{l/\kappa '}(\kappa ') <\rho^{\frac{n}{d}}>.$$\\ 
 
 Then the local-global principle holds for the norm one torus $T_{L/F}$.
\end{theorem}

\textbf{Remark}: In fact one can restrict to 
divisorial discrete valuations 
 in the above theorems.\\ 

In the last section of the paper, we also give 
counterexamples to the local-global principle 
for certain norm one tori and 
multinorm tori over a semi-global field.

We now briefly describe the strategy of the proof 
of Theorem \ref{main_theorem}. 
Let $K$ be a complete discretely valued field 
with residue field $\kappa$ 
and $F$ be 
the function field of a smooth, projective, 
geometrically integral curve over $K.$
Let $\XX$ be a regular proper model of $F$ with 
reduced special fibre $X$ a union of regular curves 
with normal crossings. 
For any point $P\in X,$ let $F_P$ be the fraction field 
of the completion of the local ring at $P$ on $\XX$. 

For a linear algebraic group $G$ over $F$, 
let us define:\\
 $$ \bullet \displaystyle \Sh_X(F, G) := \text{ker}\left( H^1(F, G) \to  \prod_{P \in X} H^1(F_P, G) 
\right) $$ 
$$  \bullet \displaystyle \Sh(F, G) := \text{ker}\left ( H^1(F, G) \to  \prod_{\nu  \in \Omega_F} H^1(F_\nu, G) \right).$$ 
It is known that $\Sh_X(F, G) \subseteq \Sh(F, G)$ 
(\cite[Proposition 8.2.]{HHK1}).\\

In general, $\Sh_X(F, G)$ and $\Sh(F, G)$ 
are just pointed sets, but they are abelian groups if $G$ is abelian. 
The pointed sets $\Sh_X(F, G)$ and $\Sh(F, G)$ 
measure the obstruction to the local-global principle for 
the group $G$ with respect to points on $X$ and 
with respect to discrete valuations on $F$ respectively.\\

First we prove the following (\ref{Sha_X_vanishes}):

\vspace{0.1in}

\begin{theorem} 
\label{int-sha-x}
 Let $K$ be a complete discretely valued field 
 with residue field $\kappa $ and 
 $F$ be 
 the function field of a smooth, projective, 
geometrically integral curve over $K.$ 
 Let $\XX$ be a regular proper model of $F$ with 
 reduced special fibre $X$ 
 a union of regular curves with normal crossings. 
 Let $L/F$ be a Galois extension over $F$ of degree $n.$ 
 Suppose that \\
 
 $\bullet$ $n$ is coprime to char$(\kappa),$\\
 
 $\bullet$ $\kappa$ contains a primitive $n^{\rm th}$ 
 root of unity $\rho,$ \\
 
 $\bullet$ for all finite extensions $\kappa '/ \kappa$ 
 and for all finite Galois extensions $l/\kappa '$ of 
 degree $d$ dividing $n,$ 
$$T_{l/\kappa '}(\kappa ') = 
RT_{l/\kappa '}(\kappa ') <\rho^{\frac{n}{d}}>,$$ \\

$\bullet$ the graph associated to $\XX$ is a tree.\\ 

Then $\Sh _{X}(F, T_{L/F}) = 0.$ 
\end{theorem}

We conclude our main theorem (Theorem \ref{main_theorem}), 
by proving that for $K$, $F$ and $L$ as in 
Theorem \ref{int-sha-x}, 
$\displaystyle \Sh(F, T_{L/F}) = \bigcup_X \Sh_X(F, G)$ 
(\ref{union_of_Sha_X_equals_Sha_dvr}), 
where $X$ is  running over the reduced special fibres 
of regular proper models $\XX$ of $F$ which are 
obtained as a sequence of blow-ups of 
$\XX_0$ centered at closed points of $\XX_0$.

\vspace{0.1in}

\section{Preliminaries}

In this section, we recall a few basic definitions and facts 
about patching and $R$-equivalence on algebraic groups 
(\cite{CTS}, \cite{G}, \cite{HHK}, \cite{HHK1}) which will 
be used in this paper. 

\vspace{0.1in}

\subsection{Patching and various Shas} \hfill 
\label{prelim_patching} 

Let $F$ be a field and $\Omega _{F}$ be the set of all 
equivalence classes of discrete valuations $\nu$ on $F.$ 
For $\nu \in \Omega _{F},$ let $F_{\nu }$ denote 
the completion of $F$ at $\nu$ and $\kappa(\nu)$ 
the residue field at $\nu.$ 
For an algebraic group $G$ over $F,$ let
 $$\Sh (F, G) := \text{ker} \left( H^1(F, G) \rightarrow 
\displaystyle \prod_{\nu \in \Omega_{F}} H^1(F_{\nu }, G) \right).$$

In this paper, we are concerned with 
a special class of fields called semi-global fields.
\vspace{0.1in}

\begin{defin}{(Semi-global field)}
\label{SGF}
A {\it semi-global field} is the function field of 
a smooth, projective, geometrically integral 
curve over a complete discretely valued field.
\end{defin}

\noindent \textbf{Example:} Some examples of semi-global fields 
are: $\mathbb{C}((t))(x)$, $\mathbb{Q}_p(x)$, $\mathbb{F}_p((t))(x)$ and \\
$\displaystyle \frac{\mathbb{C}((t))(x)[y]}{\langle xy(x+y-1)-t \rangle}$.\\

Let $T$ be a complete discretely valued ring with 
fraction field $K$ and residue field $\kappa$. 
Let $t \in T$ be a parameter.  
Let $F$ be a function field of a smooth, 
projective, geometrically integral curve over $K.$ 
Thus $F$ is a semi-global field.
Then there exists a regular $2$-dimensional integral 
scheme $\XX$ which is proper over $T$ with function 
field $F.$ We call such a scheme $\XX$ a 
{\it regular proper model} of $F.$ 
Further there exists a regular proper model 
of $F$ with the reduced special fibre 
a union of regular curves with only normal crossings. 
Let $\XX$ be a regular proper model of $F$ 
with the reduced special fibre $X$ a union of regular 
curves with only normal crossings.

For a semi-global field $F$ and 
a regular proper model $\XX$ of $F$, one can 
associate three different kinds of overfields of $F$. 
We describe them below and discuss how they are 
related to each other. 

 For any point $x$ of $\XX,$ 
 let $R_x$ be the local ring at $x$ on $\XX,$ 
 $\widehat{R}_x$ the completion of the local ring $R_x,$ 
 $F_x$ the fraction field of $\widehat{R}_x$ and 
 $\kappa(x)$ the residue field at $x.$ 

For any subset $U$ of $X$ that is contained 
in an irreducible component of $X$, 
let $R_U$ be the subring of $F$ 
consisting of the rational functions which are 
regular at every point of $U.$ 
Let $\widehat{R_U}$ be the $t$-adic completion 
of $R_U$ and $F_U$ the fraction field of $\widehat{R_U}.$

Let $\eta \in X$ be a codimension zero point and $P \in X$ 
a closed point such that $P$ is in the closure $X_\eta$ of $\eta .$ 
Such a pair $(P,\eta )$ is called a {\it branch}. 
For a branch $(P,\eta ),$ we define $F_{P,\eta }$ 
to be the completion of $F_P$ at the discrete valuation 
of $F_P$ associated to $\eta .$ 
We call such fields {\it branch fields}. 
If $\eta$ is a codimension zero point of $X$, 
$U\subset X_\eta$ an open subset and 
$P \in X_\eta$ a closed point, then 
we will use $(P,U)$ to denote the 
branch $(P,\eta)$ and $F_{P,U}$ to denote 
the field $F_{P,\eta}.$

With $P, U, \eta $ as above, 
there are natural inclusions of 
$F_P,$ $F_U$ and $F_{\eta }$ 
into $F_{P,\eta }=F_{P,U}.$ 
Also, there is a natural inclusion 
of $F_{U}$ into $F_{\eta }.$

Let $\PP$ be a non-empty finite set of closed points of $X$ 
that contains all the closed points of $X$ 
where distinct irreducible components of $X$ meet. 
Let $\mathcal{U}$ be the set of connected components 
of the complement of $\PP$ in $X$ and let $\mathcal{B}$ 
be the set of branches $(P, U)$ with 
$P \in \PP$ and $U \in \mathcal{U}$ 
with $P$ in the closure of $U$.\\

Let $G$ be a linear algebraic group over $F.$ Let us define
 $$\displaystyle \Sh _{\XX, \PP}(F, G) := 
 \text{ker}\left( H^1(F,G) \rightarrow 
 \prod _{\xi \in \PP \cup \mathcal{U}} H^1 (F_{\xi }, G) \right).$$ 
 
 If $\XX$ is understood, we will just use the notation 
 $\Sh _{ \PP}(F, G).$\\
  
 Similarly, let us define $$\displaystyle \Sh _{\XX,X}(F,G) := 
 \text{ker}\left( H^1(F,G) \rightarrow 
 \prod _{P \in X} H^1(F_P,G)\right).$$ 

Again, if $\XX$ is understood, 
we will just use the notation $\Sh _{X}(F,G).$\\

 We have a bijection(\cite[Corollary 3.6.]{HHK1}):\hfill\\
 \begin{center}
 $\displaystyle \prod_{U\in \mathcal{U}} G(F_U) 
\mathlarger{ \mathlarger{ \mathlarger{\mathlarger{\backslash}}}} \prod_{(P,U) \in \mathcal{B}} G(F_{P,U}) 
\mathlarger{\mathlarger{\mathlarger{\mathlarger{/}}}} 
 \prod_{P\in \PP} G(F_P) \rightarrow \Sh _{\PP}(F, G) $
 \end{center}

This is a very useful result since 
the double coset mentioned above is usually more 
manageable to work with.

By (\cite[Corollary 5.9.]{HHK1}), we have 
$\displaystyle \Sh _{X}(F, G) = \bigcup \Sh _{\PP}(F, G)$ 
where union ranges over all finite subsets 
$\PP$ of closed points of $\XX$ 
which contain all the singular points of $X.$ 
We also have $\Sh _{X}(F, G) \subseteq \Sh (F, G)$ 
(\cite[Proposition 8.2.]{HHK1}).

\vspace{0.1in} 

\subsection{The associated graph}
 
We start with a basic fact about finite bipartite trees:

\begin{lemma}
 \label{factorisation_in_graphs}
  Let $\Gamma$ be a  finite bipartite graph and 
  $G$ be an abstract group. 
  Let $V$ be the set of vertices with parts 
  $V_1$ and $V_2.$ 
  For each edge $\theta$ of $\Gamma,$ 
  let $g_\theta \in G.$ 
  If $\Gamma$ is a tree, 
  then for every $v \in V,$ 
  there exists $g_v \in G$ such that if 
  $e$ is an edge joining two vertices $v_i \in V_i,$
  then $g_e = g_{v_1}g_{v_2}.$ 
 \end{lemma}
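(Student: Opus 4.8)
The plan is to induct on the number of edges of $\Gamma$, exploiting the fact that a finite tree always has a leaf. (If $\Gamma$ is infinite one can either reduce to the finite case, since the condition $g_e = g_{v_1}g_{v_2}$ is a local one on each edge, or run the same leaf-stripping argument transfinitely; I will treat the finite case, which is what is needed in the sequel.) The base case is a tree with no edges: here every vertex is isolated, there are no constraints, and we may simply take $g_v = 1$ for all $v \in V$ (or any elements we like).

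For the inductive step, suppose $\Gamma$ is a finite tree with at least one edge. Since a finite tree has a vertex of degree one, pick a leaf $w$, let $e_0$ be the unique edge incident to $w$, and let $u$ be the other endpoint of $e_0$. Let $\Gamma'$ be the subgraph obtained by deleting $w$ and $e_0$; then $\Gamma'$ is again a finite tree (still bipartite, with the induced $2$-colouring), and it has one fewer edge. By the induction hypothesis there exist elements $g'_v \in G$ for all vertices $v$ of $\Gamma'$ such that $g_e = g'_{v_1} g'_{v_2}$ for every edge $e$ of $\Gamma'$ joining $v_1 \in V_1$ to $v_2 \in V_2$. Now set $g_v = g'_v$ for every vertex $v \neq w$, and define $g_w$ by the single remaining constraint: if $w \in V_1$ we need $g_{e_0} = g_w g_u$, so set $g_w = g_{e_0} g_u^{-1}$; if $w \in V_2$ we need $g_{e_0} = g_u g_w$, so set $g_w = g_u^{-1} g_{e_0}$. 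Every edge of $\Gamma'$ still satisfies its constraint because the $g_v$ with $v \neq w$ were unchanged, and the unique new edge $e_0$ satisfies its constraint by construction. This completes the induction.

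There is no real obstacle here; the only point requiring a little care is the bookkeeping with the bipartite orientation, namely that in the relation $g_e = g_{v_1} g_{v_2}$ the factor from $V_1$ is written on the left, so that when the newly added leaf lies in $V_2$ one must solve for $g_w$ on the right rather than the left — hence the two cases above. (One should also note that the statement is an existence statement with a great deal of freedom: once $g_v$ is fixed at any one vertex of each connected component, all the others are determined by walking outward along the tree, and the tree condition is exactly what guarantees there is no cycle around which the constraints could become inconsistent.) In the applications later in the paper, $G$ will be a group of the form $F^*$ or $H^1(\kappa(\nu), \mu_n)$-type coefficient groups and the $g_\theta$ will encode local discrepancies along branches, so having the freedom to choose the $g_v$ at will at the "root" of the tree is precisely what makes the patching argument go through.
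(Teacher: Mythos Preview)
Your proof is correct and follows essentially the same approach as the paper's: both argue by induction via removing a leaf and its unique incident edge, then solving for the missing label at the new vertex from the single remaining constraint. The only cosmetic differences are that the paper first reduces to the connected case and inducts on the number of vertices (with base case a single vertex), whereas you induct on the number of edges and handle the two bipartite cases for the leaf explicitly rather than by a ``without loss of generality'' reduction.
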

 
 \begin{proof}
  Suppose that $\Gamma$ is a tree. Without loss of generality, 
  we may assume that $\Gamma$ is a connected graph. 
  We prove the lemma by the induction on number of vertices. 
  Suppose that $\Gamma$ has one one vertex. 
  Then there is nothing to prove.

 Suppose that $\Gamma$ has more than one vertex. 
 Since $\Gamma$ is a connected tree, 
 there exists a vertex $v_0 \in V$ with exactly one edge 
 $\theta$ at $v_{0}.$ 
 Without loss of generality, we may assume $v_0 \in V_1.$ 
 Let $\Gamma'$ be the graph obtained from $\Gamma$ 
 by removing the vertex $v_0$ and the edge $\theta.$ 
 Then $\Gamma'$ is again a finite bipartite graph 
 which is a tree. 
 % If $\theta$ is an edge at $v_2$ which is not equal to $\theta_1,$ let 
% $g'_{\theta'} = g_{\theta_1}^{-1} g_{\theta} \in G.$ For all other edges $\theta$
% in $\Gamma',$ let $g'_\theta = g_\theta \in G.$
 Then, by induction hypothesis, for every vertex $v$ of $\Gamma',$ 
 there exists $g_v \in G$ such that if $e$ is an edge in 
 $\Gamma'$ joining $v_1 \in V_1\setminus \{ v_0 \}$ and 
 $v_2 \in V_2,$ then $g_{e} = g_{v_1} g_{v_2}.$ 
 Let $v_0' \in V_2$ be the other vertex of the edge $\theta.$ 
 Let $g_{v_0} = g_{\theta} g_{v_0'}^{-1}.$ 
 Then it follows that $g_v$ have the required property. 
 \end{proof}

Let $T$ be a complete discretely valued ring with fraction field $K,$ 
and residue field $\kappa $. Let $t \in T$ be a parameter. 
Let $F$ be the function field of a smooth, projective, 
geometrically integral curve over $K$ 
and $\XX$ be a regular proper model of $F$ with 
the reduced special fibre $X$ a union of regular curves 
with only normal crossings. 
Let $\PP$ be a non-empty finite set of closed points of $X$ 
that contains all the closed points of $X$ 
where distinct irreducible components of $X$ meet. 
Let $\mathcal{U}$ be the set of connected components of 
the complement of $\PP$ in $X$ and let $\mathcal{B}$ be 
the set of branches $(P, U)$ with $P \in \PP$ 
and $U \in \mathcal{U}$ with $P$ in the closure of $U$.  

 We have a graph $\Gamma(\XX,\PP)$ associated to 
 $\XX$ and $\PP$ whose 
 vertices are elements of $\PP \cup \mathcal{U}$ 
 and edges are elements of $\mathcal{B}.$ 
Since there are no edges between any vertices 
which are in $\PP$ (resp. $\mathcal{U}$), 
$\Gamma(\XX, \PP)$ is a finite bipartite  graph with 
parts $\PP$ and $\mathcal{U}$. 
If $\PP'$ is another finite set of closed points of $X$ 
containing all the closed points of 
$X$ where  distinct irreducible components 
of $X$ meet, then $\Gamma(\XX,\PP)$ is a tree is and only if 
$\Gamma(\XX, \PP')$ is a tree (\cite[Remark 6.1(b)]{HHK1}). 
Hence if $\Gamma(\XX,\PP)$ is a tree for some $\PP$ as above, 
then we say that  the graph $\Gamma(\XX)$ 
associated to $\XX$ is a tree.\\ 

Now we have the following result as 
a corollary to Lemma \ref{factorisation_in_graphs}: 
\vspace{0.1in}  
 
 \begin{cor} 
 \label{tree}
 Let $F,$ $\XX,$ $X,$ $\PP,$ $\mathcal{U}$ and 
 $\mathcal{B}$ be as above. 
 Let $G$ be an abstract group and for each branch 
 $b \in \mathcal{B},$ let $g_b \in G.$ 
 Suppose that the graph $\Gamma(\XX)$ 
 associated to $\XX$ is a tree. 
 Then for every $\zeta \in \mathcal{U} \cup \PP,$ 
 there exists $g_\zeta \in G$ such that if 
 $b = (P, U) \in \mathcal{B},$ 
 then $g_b = g_P g_U.$
\end{cor}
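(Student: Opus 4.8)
The plan is to deduce this statement directly from the preceding Lemma; no new idea is needed, only a translation of the combinatorial data. First I would recall that $\Gamma(\XX,\PP)$ is, by its very definition, a bipartite graph: its vertex set is $\PP \cup \mathcal{U}$, its two parts are $\PP$ and $\mathcal{U}$, and its edge set is exactly $\mathcal{B}$, where an edge $b \in \mathcal{B}$ is a branch $(P,U)$ with $P \in \PP$ and $U \in \mathcal{U}$ and hence joins a vertex of $\PP$ to a vertex of $\mathcal{U}$. The hypothesis that the graph $\Gamma(\XX)$ associated to $\XX$ is a tree means, by definition together with the cited remark of \cite{HHK1}, that $\Gamma(\XX,\PP')$ is a tree for some (equivalently, for every) admissible finite set $\PP'$ of closed points containing the singular points of $X$; in particular $\Gamma(\XX,\PP)$ is a tree.

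Next I would invoke the Lemma with $\Gamma = \Gamma(\XX,\PP)$, with the bipartition $V_1 = \PP$, $V_2 = \mathcal{U}$, and with $g_\theta := g_b$ for the edge $\theta$ corresponding to each branch $b \in \mathcal{B}$. The Lemma then yields, simultaneously for every vertex $\zeta \in \PP \cup \mathcal{U}$, an element $g_\zeta \in G$ such that for any edge $e$ joining $v_1 \in V_1 = \PP$ to $v_2 \in V_2 = \mathcal{U}$ one has $g_e = g_{v_1} g_{v_2}$. Reading this back in terms of branches, for every $b = (P,U) \in \mathcal{B}$ we get $g_b = g_P g_U$, which is precisely the assertion of the Corollary.

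The only things to watch — and they are minor — are, first, to take the bipartition in the correct order, $V_1 = \PP$ and $V_2 = \mathcal{U}$, so that the output is $g_b = g_P g_U$ and not $g_b = g_U g_P$ (this matters since $G$ is only an abstract, possibly non-abelian, group); and second, to note that no connectedness assumption on $\Gamma(\XX,\PP)$ is required, since the proof of the Lemma already reduces to the connected case by working component by component. Consequently I do not expect any real obstacle here: the Corollary is simply the specialization of the Lemma to the bipartite graph attached to a regular proper model.
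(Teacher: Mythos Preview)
Your proposal is correct and matches the paper's approach: the paper states the corollary without proof, treating it as an immediate specialization of the preceding Lemma to the bipartite graph $\Gamma(\XX,\PP)$ with parts $V_1=\PP$ and $V_2=\mathcal{U}$, exactly as you describe. Your remarks about the ordering of the bipartition and the irrelevance of connectedness are apt but not strictly needed, since the Lemma already handles both points.
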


\vspace{0.1in}

\subsection{ R-equivalence and R-trivial elements}\hfill \\
\label{Requivalence}
Other than patching, we use the notion of $R$-equivalence 
in this paper. We define the notion below and discuss results about 
$R$-equivalence for norm one tori, which will be used later.

\vspace{0.1in}

\begin{notation}
 Let $F$ be a field and $L$ be an \'etale algeba over $F.$ 
 Throughout this paper, we will denote the 
 norm $1$ torus $R^1_{L/F} \mathbb{G}_m$ by $T_{L/F}.$
\end{notation}

 Let $X$ be a variety over a field $F.$ 
 For a field extension $L$ of $F,$ 
 let $X(L)$ be the set of $L$-points of $X.$ 
 We say that two points $x_0, x_1 \in X(L)$ are 
 {\it elementary $R$-equivalent}, denoted by 
 $x_0 \sim x_1,$ if there is a rational map 
 $f: \mathbb{P}^1(L) \dashrightarrow X(L)$ 
 such that $f(0) = x_0$ and $f(1) = x_1.$ 
 The equivalence relation generated by $\sim$
  is called {\it $R$-equivalence}. 
 When $X = G$ is an algebraic group defined 
 over $F$ with the identity element $e,$ 
 we define $RG(L) = $ $\{$ $ x \in G(L):$ 
 $x$ is $R$-equivalent to $e \}.$ 
 The elements of $RG(L)$ are called 
 $R$-trivial elements. It is well-known 
 that $RG(L)$ is a normal subgroup of 
 $G(L)$ (cf. \cite[p-1]{G}) Sometimes, we denote 
 $G(L)/RG(L)$ by $G(L)/R$. 
 Let $L/F$ be a Galois extension 
 with Galois group $G,$ 
 and $T_{L/F}$ the norm $1$ 
 torus associated to the extension $L/F.$
 Then for any extension $N/F,$  $RT_{L/F}(N)$ 
 is the subgroup generated by the set
 $\{ a^{-1}\sigma (a): 
 a \in (L\otimes_F N)^{\times}, \sigma \in G \}$ 
(\cite[Proposition 15]{CTS}).\\ 

Now we discuss few basic results about 
$R$-equivalence on norm one tori:

\vspace{0.1in}

\begin{prop}
\label{product_of_field}
 Let $F$ be a field and $L_0/F$ a finite extension. 
 Let $L$ be the product of $r$ copies of $L_0.$ 
 Then the homomorphism $T_{L/F} \to T_{L_0/F}$ 
 given by $(a_1, \cdots , a_r) \mapsto a_1\cdots a_r$ 
 induces an isomorphism $T_{L/F}(F)/RT_{L/F}(F) \to 
 T_{L_0/F}(F)/RT_{L_0/F}(F).$ 
\end{prop}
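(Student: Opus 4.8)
The plan is to exhibit $T_{L/F}$ as a direct product $K\times T_{L_0/F}$ with $K$ a quasi-trivial torus, the product map being projection onto the second factor, and then to invoke the standard behaviour of $R$-equivalence under products.

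First, since $L$ is the product of $r$ copies of $L_0$, one has $R_{L/F}\mathbb{G}_m=(R_{L_0/F}\mathbb{G}_m)^r$, the norm of $L/F$ is $(a_1,\dots,a_r)\mapsto\prod_i N_{L_0/F}(a_i)$, and hence $T_{L/F}$ is the subtorus of $(R_{L_0/F}\mathbb{G}_m)^r$ defined by $\prod_i N_{L_0/F}(a_i)=1$. The product map $\phi\colon (a_1,\dots,a_r)\mapsto a_1\cdots a_r$ does land in $T_{L_0/F}$ because $N_{L_0/F}(a_1\cdots a_r)=\prod_i N_{L_0/F}(a_i)=1$. The key point is that $\phi$ admits a section which is a homomorphism of $F$-group schemes, namely $s\colon T_{L_0/F}\to T_{L/F}$, $b\mapsto(b,1,\dots,1)$, for which $\phi\circ s=\mathrm{id}$; in particular $\phi$ is already surjective on $F$-points. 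Let $K=\ker\phi=\{(x_1,\dots,x_r):x_1\cdots x_r=1\}$; it is contained in $T_{L/F}$, and $(x_1,\dots,x_{r-1})\mapsto(x_1,\dots,x_{r-1},(x_1\cdots x_{r-1})^{-1})$ identifies it with $R_{M/F}\mathbb{G}_m$ for $M$ the product of $r-1$ copies of $L_0$, so $K$ is a quasi-trivial torus. Since $\phi\circ s=\mathrm{id}$ and all the groups are commutative, $(k,b)\mapsto k\cdot s(b)$ is an isomorphism $K\times T_{L_0/F}\xrightarrow{\ \sim\ }T_{L/F}$ of $F$-group schemes, with inverse $g\mapsto\bigl(g\cdot s(\phi(g))^{-1},\phi(g)\bigr)$, and under this identification $\phi$ corresponds to the projection onto $T_{L_0/F}$.

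Next I would use two classical facts about $R$-equivalence on tori (\cite{CTS}): a quasi-trivial torus $P$ is $R$-trivial, i.e.\ $RP(F')=P(F')$ for every field $F'\supseteq F$; and $R$-equivalence is functorial for morphisms of $F$-varieties and multiplicative for finite products, so that $R(G_1\times G_2)(F)=RG_1(F)\times RG_2(F)$ and any homomorphism $G\to H$ of $F$-groups sends $RG(F)$ into $RH(F)$. Applying these to $K\times T_{L_0/F}\cong T_{L/F}$ yields $RT_{L/F}(F)=RK(F)\times RT_{L_0/F}(F)=K(F)\times RT_{L_0/F}(F)$, while $T_{L/F}(F)=K(F)\times T_{L_0/F}(F)$; passing to quotients, the projection onto $T_{L_0/F}$ — that is, $\phi$ — induces the asserted isomorphism $T_{L/F}(F)/RT_{L/F}(F)\xrightarrow{\ \sim\ }T_{L_0/F}(F)/RT_{L_0/F}(F)$.

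If one prefers to avoid the product formalism, the same two facts give a direct argument: surjectivity of the induced map is witnessed by $s$; and if $(a_1,\dots,a_r)\in T_{L/F}(F)$ satisfies $b:=a_1\cdots a_r\in RT_{L_0/F}(F)$, then writing $(a_1,\dots,a_r)=k\cdot s(b)$ with $k=(a_1,\dots,a_{r-1},(a_1\cdots a_{r-1})^{-1})\in K(F)$, one has $s(b)\in RT_{L/F}(F)$ because $s$ is a homomorphism and $b$ is $R$-trivial, and $k\in K(F)=RK(F)\subseteq RT_{L/F}(F)$ because $K$ is quasi-trivial and $R$ is functorial along $K\hookrightarrow T_{L/F}$; hence $(a_1,\dots,a_r)$ is $R$-trivial. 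I do not expect a genuine obstacle here: everything reduces to the split short exact sequence $1\to K\to T_{L/F}\to T_{L_0/F}\to 1$ living in the category of $F$-group schemes together with the standard properties of $R$-equivalence; the only thing that needs care is checking that the splitting $s$ really is a morphism of group schemes, so that the product decomposition — and hence the computation of $R$-classes — is legitimate.
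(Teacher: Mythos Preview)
Your proof is correct and follows essentially the same route as the paper: both exhibit an isomorphism $T_{L/F}\cong (R_{L_0/F}\mathbb{G}_m)^{r-1}\times T_{L_0/F}$ under which the product map becomes projection onto the second factor, and then use that Weil restrictions of $\mathbb{G}_m$ are rational, hence $R$-trivial. The paper obtains the splitting by restricting the automorphism $(b_1,\dots,b_r)\mapsto(b_1,\dots,b_{r-1},b_1\cdots b_r)$ of the ambient torus (citing \cite[Lemma 1.1]{FLP}), whereas you build it from the explicit section $b\mapsto(b,1,\dots,1)$ and the identification of the kernel; these are cosmetic variants of the same decomposition.
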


\begin{proof}
 In fact the isomorphism 
 $(R_{L_0/F}(\mathbb{G}_m))^r \to 
 (R_{L_0/F}(\mathbb{G}_m))^r$ 
 given by sending $(b_1, \cdots , b_r)$ to 
 $(b_1, \cdots, b_{r-1}, b_1b_2\cdots b_r)$ 
 induces an isomorphism of algebraic groups 
 $\displaystyle T_{L/F} \to 
 (R_{L_0/F}(\mathbb{G}_m))^{r-1} \times 
 T_{L_0/F}$ (\cite[Lemma 1.1.]{FLP}). 
 Since $R_{L_0/F}(\mathbb{G}_m)$ is rational, it is
 $R$-trivial by (\cite[Corollary 1.6.]{G}).
 Hence the homomorphism 
 $T_{L/F} \to T_{L_0/F}$ 
 given by $(a_1, \cdots , a_r) 
 \mapsto a_1\cdots a_r$ 
 induces an isomorphism 
 $T_{L/F}(F)/RT_{L/F}(F) 
 \to T_{L_0/F}(F)/RT_{L_0/F}(F).$ 
\end{proof}

\vspace{0.1in}

\begin{cor}
\label{product_of_field_cor}
 Let $F$ be a field and $L_0/F$
  a finite extension of degree $d$ 
  and $L$ the product of $r$ copies of $L_0.$ 
  Suppose that $F$ contains $\rho,$ 
  a primitive $(dr)^{\rm th}$ root of unity. 
  If $T_{L_0/F}(F) = RT_{L_0/F}(F)<\rho^r>,$ 
  then $T_{L/F}(F) = RT_{L/F}(F)<\rho>.$ 
\end{cor}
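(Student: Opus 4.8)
The plan is to deduce this directly from Proposition \ref{product_of_field} together with a bookkeeping of the root of unity. First I would invoke Proposition \ref{product_of_field}: the norm multiplication map $T_{L/F} \to T_{L_0/F}$, $(a_1,\dots,a_r)\mapsto a_1\cdots a_r$, induces an isomorphism $T_{L/F}(F)/RT_{L/F}(F) \xrightarrow{\sim} T_{L_0/F}(F)/RT_{L_0/F}(F)$. So modulo $R$-trivial elements, understanding $T_{L/F}(F)$ is the same as understanding $T_{L_0/F}(F)$, and the given hypothesis tells us the latter is generated modulo $RT_{L_0/F}(F)$ by $\rho^r$, a $d$-th root of unity (since $\rho$ is a primitive $(dr)$-th root of unity, $\rho^r$ has order $d$).

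Next I would trace the element $\rho \in F^*$, viewed as a scalar point of $T_{L/F}$: the diagonal tuple $(\rho,\dots,\rho)$ lies in $T_{L/F}(F)$ because its norm to $F$ is $N_{L_0/F}(\rho)^{?}$ — more carefully, $\rho$ regarded in each copy $L_0$ has norm $\rho^d$, so the $r$-tuple $(\rho,\dots,\rho)$ has total norm $\rho^{dr}=1$, confirming it is a genuine point of $T_{L/F}(F)$. Under the multiplication map this tuple goes to $\rho^r \in T_{L_0/F}(F)$. Therefore $\langle \rho \rangle \subseteq T_{L/F}(F)$ maps onto $\langle \rho^r\rangle \subseteq T_{L_0/F}(F)$. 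Combining this with the isomorphism of Proposition \ref{product_of_field}, the image of $RT_{L/F}(F)\langle\rho\rangle$ in $T_{L_0/F}(F)/RT_{L_0/F}(F)$ is exactly the subgroup generated by the class of $\rho^r$, which by hypothesis is all of $T_{L_0/F}(F)/RT_{L_0/F}(F)$.

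Finally I would close the argument: since the map $T_{L/F}(F)/RT_{L/F}(F)\to T_{L_0/F}(F)/RT_{L_0/F}(F)$ is an isomorphism and the subgroup $RT_{L/F}(F)\langle\rho\rangle/RT_{L/F}(F)$ surjects onto the target, it must equal the whole source, i.e. $T_{L/F}(F) = RT_{L/F}(F)\langle\rho\rangle$. The only point requiring care — the ``main obstacle,'' though it is minor — is checking that $\rho$ really does define an $F$-point of $T_{L/F}$ and tracking the exponent correctly: one must use that the norm on each factor $L_0/F$ raises $\rho$ to the $d$-th power and that there are $r$ factors, so the bound $\rho^{dr}=1$ is exactly what makes the diagonal point norm-one, and its image in $T_{L_0/F}$ is $\rho^r$ and not some other power. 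Everything else is formal.
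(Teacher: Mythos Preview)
Your proposal is correct and follows essentially the same approach as the paper: the paper's proof is the one-liner ``since $(\rho,\rho,\dots,\rho)$ maps to $\rho^r$ under the isomorphism given in Proposition~\ref{product_of_field}, the corollary follows,'' and your argument is a fleshed-out version of exactly this, with the added verification that the diagonal tuple is indeed a norm-one point.
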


\begin{proof}
 Since $(\rho, \rho, \cdots, \rho)$ 
 maps to $\rho^r$ under 
 the isomorphism given in (\ref{product_of_field}),
the corollary follows from (\ref{product_of_field}). 
\end{proof}

\vspace{0.1in}

\begin{lemma} 
\label{nthpower}
 Let $L/F$ be a finite Galois extension 
 of degree $m$ and $N/F$ any field extension. 
 If $\alpha \in (L \otimes_F N)^{\times},$ 
 then $N_{L \otimes_F N/N }(\alpha)^{-1} \alpha^m 
 \in RT_{L/F}(N).$
\end{lemma}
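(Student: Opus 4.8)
The plan is to exhibit $N_{L\otimes_F N/N}(\alpha)^{-1}\alpha^m$ as a product of elements of the form $a^{-1}\sigma(a)$, since by the description of $RT_{L/F}(N)$ recalled above (\cite[Proposition 15]{CTS}) the subgroup $RT_{L/F}(N)$ is generated exactly by such elements with $a\in(L\otimes_F N)^*$ and $\sigma\in G=\mathrm{Gal}(L/F)$. Write $G=\{\sigma_1,\dots,\sigma_m\}$ with $\sigma_1=\mathrm{id}$. The norm is $N_{L\otimes_F N/N}(\alpha)=\prod_{i=1}^m\sigma_i(\alpha)$, an element of $N^*$ (viewed inside $(L\otimes_F N)^*$ via the diagonal), which is fixed by every $\sigma\in G$.

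First I would record the key telescoping identity
$$
\alpha^m = \prod_{i=1}^m \alpha = \prod_{i=1}^m \bigl(\alpha\cdot\sigma_i(\alpha)^{-1}\bigr)\cdot\prod_{i=1}^m\sigma_i(\alpha) = \Bigl(\prod_{i=1}^m \alpha\,\sigma_i(\alpha)^{-1}\Bigr)\cdot N_{L\otimes_F N/N}(\alpha),
$$
so that
$$
N_{L\otimes_F N/N}(\alpha)^{-1}\alpha^m = \prod_{i=1}^m \alpha\,\sigma_i(\alpha)^{-1}.
$$
Each factor $\alpha\,\sigma_i(\alpha)^{-1} = \bigl(\alpha^{-1}\sigma_i(\alpha)\bigr)^{-1}$ is the inverse of a generator $a^{-1}\sigma_i(a)$ (with $a=\alpha$), hence lies in the subgroup $RT_{L/F}(N)$; the $i=1$ factor is trivial. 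Therefore the product lies in $RT_{L/F}(N)$, which is what we want.

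One small point to check is that the element $N_{L\otimes_F N/N}(\alpha)^{-1}\alpha^m$ genuinely lies in $T_{L/F}(N)$, i.e. has reduced norm $1$ down to $N$: indeed its norm to $N$ is $N_{L\otimes_F N/N}(\alpha)^{-m}\cdot N_{L\otimes_F N/N}(\alpha)^m = 1$, using that $N_{L\otimes_F N/N}(\alpha)\in N^*$ so that its further norm from $L\otimes_F N$ to $N$ is its $m$-th power. The only mild subtlety — and the one place a little care is needed — is the case where $L\otimes_F N$ is not a field but a product of copies of a field (which happens when $N$ splits $L$ partially); but the identity $\alpha^m=\bigl(\prod_i\alpha\,\sigma_i(\alpha)^{-1}\bigr)N_{L\otimes_F N/N}(\alpha)$ and the generation statement for $RT_{L/F}(N)$ both hold verbatim in that generality, so no separate argument is required. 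Hence the lemma follows.
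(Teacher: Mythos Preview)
Your proof is correct and is essentially the same as the paper's: both rewrite $N_{L\otimes_F N/N}(\alpha)^{-1}\alpha^m$ as $\prod_{\sigma\in G}\alpha\,\sigma(\alpha)^{-1}$ and observe that each factor lies in $RT_{L/F}(N)$. The extra checks you include (norm-one verification, the split case) are fine but not needed beyond what the paper does.
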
 

\begin{proof}
 Let $G$ be the Galois group of $L/F.$ 
 Since $\displaystyle N_{L \otimes_F N/N}(\alpha) 
 = \prod_{\sigma \in G} \sigma(\alpha),$ 
 we have 
 $$\displaystyle N_{L \otimes_F N/N}(\alpha)^{-1} \alpha^m = 
 \frac{\alpha^m}{\prod_{\sigma \in G} \sigma(\alpha)} = 
 \prod_{\sigma \in G} \frac{\alpha}{\sigma(\alpha)}.$$ 
 Hence $N_{L \otimes_F N/N}(\alpha)^{-1} \alpha^m 
 \in RT_{L/F}(N).$
\end{proof}

% We end this section with the following 
 %basic lemma about $l$ groups, 
 %which will be used later: 
 %\begin{lemma}
 %\label{subgroups_of_index_l} 
%Let $l$ be a prime and $G$ a group of order $l^n$ 
%for $n \geq 1.$ 
%If $G$ is not cyclic, 
%then it has at least two subgroups of index $l.$
%\end{lemma}
%\begin{proof} 
%Suppose $G$ is abelian. 
%Since $G$ is not cyclic, 
%$G \cong G_1 \times G_2$ 
%for some  non-trivial subgroups  
%$G_1, G_2$ of $G.$ 
%Since $l$ divides the order of $G_i$ 
%and $G_i$ is abelian, there exists a 
%subgroup $H_i$ of $G_i$ of index $l.$ 
%Then $H_1 \times G_2$ and $G_1 \times H_2$ 
%are two subgroups of $G$ of index $l.$

%We will prove by induction on the 
%order of the group $G.$ 
%Since $G$ is not cyclic, $n \geq 2.$ 
%If $n = 2,$ then $G$ is abelian and 
%hence we have two subgroups with 
%required properties. 
%Suppose that $n \geq 3$ and 
%$G$ is not abelian. 
%Let $H$ be a subgroup of $G$ of 
%order $l$ contained in the center of $G.$ 
%Since $G$ is not abelian, 
%$G/H$ is not cyclic 
%and has order $l^{n-1}.$ 
%Therefore, by induction, 
%there exist two subgroups 
%of $G/H$ of index $l.$ 
%The pullback of these two subgroups 
%of $G/H$ to $G$ 
%have the required property. 
%\end{proof}
 
\vspace{0.1in}

\section{Norm one elements - complete discretely valued fields}

Let $F$ be a complete discretely valued field with 
residue field $\kappa$. Let $L/F$ be a Galois extension. 
Let $l$ denote the residue field of $L.$ In this section, 
we investigate the relationship between the groups $T_{L/F}(F)/RT_{L/F}(F)$ 
and $T_{l/\kappa}(\kappa)/RT_{l/\kappa}(\kappa)$. 
This allows us to transfer our assumptions 
from the residue fields to the branch fields 
(see Subsection \ref{prelim_patching}), 
which is crucial for proving local-global principles 
for norm one tori for semi-global fields 
(see Lemma \ref{branch}).

\vspace{0.1in}

\begin{lemma}
\label{1modulo_maximal_ideal}
Let $F$ be a complete discretely valued field 
with residue field $\kappa$ and $L/F$ be 
a finite Galois extension of degree $n$ 
with residue field $l.$ 
Suppose that $n$ is coprime to char$(\kappa).$ 
Let $z \in T_{L/F}(F).$ 
If the image of $z$ in $l$ is 1, 
then $z \in RT_{L/F} (F).$ 
\end{lemma}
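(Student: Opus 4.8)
The plan is to reduce everything to Lemma~\ref{nthpower}. Recall that for any $w \in (L\otimes_F F)^* = L^*$ that happens to lie in $T_{L/F}(F)$ one has $w^n = N_{L/F}(w)^{-1}w^n \in RT_{L/F}(F)$. So it suffices to write the given $z$ as $w^n$ with $w \in T_{L/F}(F)$; the bulk of the work is producing such an $n$-th root inside the group of one-units.

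First I would note that $z$ is automatically a unit of $\mathcal{O}_L$: since $L/F$ is Galois, $N_{L/F}(z) = \prod_{\sigma}\sigma(z)$ and each $\sigma$ preserves $v_L$, so $N_{L/F}(z)=1$ forces $[L:F]\,v_L(z)=0$, i.e. $v_L(z)=0$. Thus the hypothesis that $z$ has residue $1$ in $l$ says precisely that $z$ lies in the group $U^1_L = 1+\mathfrak{m}_L$ of one-units. Next, the key structural observation: because $n$ is invertible in $\kappa$ (hence in $l$), the $n$-th power map is an automorphism of $U^1_L$. Indeed it preserves the filtration $U^i_L = 1+\mathfrak{m}_L^i$ and induces multiplication by $n$ on each successive quotient $U^i_L/U^{i+1}_L \cong l$ ($i\ge 1$), which is an isomorphism; since $U^1_L$ is $\mathfrak{m}_L$-adically complete, an endomorphism that is an isomorphism on every graded piece is an isomorphism. (Concretely, $U^1_L$ is pro-$p$ when $\mathrm{char}\,\kappa=p>0$ and uniquely divisible when $\mathrm{char}\,\kappa=0$.) Hence there is a unique $w\in U^1_L$ with $w^n=z$.

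The remaining --- and only slightly delicate --- point is to check that this $w$ is norm-one, since a priori $w$ only lives in $L$. From $w^n=z$ one gets $N_{L/F}(w)^n = N_{L/F}(z) = 1$, so $N_{L/F}(w)$ is an $n$-th root of unity in $F$; on the other hand $N_{L/F}(w)=\prod_\sigma\sigma(w)$ is a product of one-units of $\mathcal{O}_L$ lying in $F$, hence an element of $U^1_L\cap F = U^1_F$, and the same argument as above shows the $n$-th power map is injective on $U^1_F$. Therefore $N_{L/F}(w)=1$, so $w\in T_{L/F}(F)$, and Lemma~\ref{nthpower} applied with $N=F$ and $\alpha=w$ gives $z=w^n\in RT_{L/F}(F)$. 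I expect the norm-one verification for the $n$-th root to be the step needing the most care; the rest is standard unit-filtration theory, and the coprimality of $n$ to $\mathrm{char}\,\kappa$ is exactly what makes it run.
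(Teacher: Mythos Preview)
Your proof is correct and follows essentially the same strategy as the paper: extract an $n$-th root $w$ of $z$ that is a one-unit, verify $N_{L/F}(w)=1$, and then invoke Lemma~\ref{nthpower}. The only cosmetic difference is that the paper phrases both steps via Hensel's lemma (existence of $w$, and the bijection between $n$-th roots of unity in $S$ and in $l$ to force $N_{L/F}(w)=1$), whereas you phrase them via the filtration $U^i_L$ and the fact that the $n$-th power map is an automorphism of $U^1$; these are equivalent arguments.
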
 

\begin{proof}
 Let $S$ be the integral closure of $R$ in $L.$ 
 Then $S$ is a complete discrete valuation ring 
 with residue field $l.$ 
 Let $z \in T_{L/F}(F)$ with the 
 image of $z$ in $l$ is $1.$ 
 Since $n$ is coprime to char$(\kappa),$ 
 by Hensel's lemma, there is a $w \in S$ 
 with $\overline{w}=1$ and $z=w^n.$ 
 Since $N_{L/F}(z) = 1,$ $N_{L/F}(w)^n = 1$ 
 and hence $ \rho = N_{L/F}(w)$ is 
 an $n^{\rm th}$ root of unity. 
 Since $\overline{w} = 1,$ 
 $\overline{N_{L/F}(w)} = 
 N_{l/\kappa}(\overline{w})^e = 1,$ 
 where $e$ is the ramification index 
 of the extension $L/F.$ 
 Hence $\overline{\rho} = 1.$ 
 Since $n$ is coprime to char$(\kappa),$ 
 by Hensel's lemma, the quotient map $S \to l$ 
 induces a bijection from the set of $n^{\rm th}$ 
 roots of unity in $S$ to the set of $n^{\rm th}$ 
 roots of unity in $l.$ 
 Hence $\rho = 1$ and $w \in T_{L/F}(F).$ 
 Since $z = w^n,$ $z \in RT_{L/F}(F)$
  by (\ref{nthpower}). 
\end{proof}

\vspace{0.1in}

\begin{lemma} 
\label{residue_field_assumption}
Let $F$ be a complete discretely valued field with 
residue field $\kappa. $
Let $L/F$ be a Galois extension of degree $n.$ 
Suppose that $(n,char(\kappa ))=1.$ 
Suppose that  $F$ contains a primitive 
$n^{\rm th}$ root of unity $\rho _n$. 
Let $l$ be the residue field of $L$ 
and $f=[l:\kappa ].$ 
If $\displaystyle T_{l/\kappa }(\kappa ) = 
RT_{l/\kappa }(\kappa )<\rho_n^{n/f}>,$ 
then $\displaystyle T_{L/F}(F) = RT_{L/F}(F) <\rho_n>.$  
\end{lemma}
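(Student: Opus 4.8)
The plan is to push the statement down to the residue field and then quote Lemma~\ref{1modulo_maximal_ideal}. Let $R$ be the valuation ring of $F$, let $S$ be its integral closure in $L$ (a complete discrete valuation ring, since $F$ is complete and $L$ is a field), let $\mathfrak{m}$ be the maximal ideal of $S$, and let $e,f$ be the ramification index and residue degree of $L/F$, so $ef=n$. Because $n$ is coprime to $\operatorname{char}(\kappa)$, the extension $L/F$ is tamely ramified, so the residue extension $l/\kappa$ is separable; it is normal since $L/F$ is Galois, hence $l/\kappa$ is Galois of degree $f$ with $f\mid n$. I will use the numerical coincidence $n/e=f$, $\,n/f=e$ repeatedly. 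Write $\rho$ for $\rho_n$ and, abusing notation, also for its image in $\kappa^{*}$, which is again a primitive $n$-th root of unity since $(n,\operatorname{char}(\kappa))=1$.

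First I would show every $z\in T_{L/F}(F)$ is a unit of $S$: from $N_{L/F}(z)=1$ and $v_F(N_{L/F}(z))=f\,v_L(z)$ one gets $v_L(z)=0$, so $z\in S^{*}$ and its image $\bar z\in l^{*}$ is defined. Next I would reduce the norm: writing $N_{L/F}(z)=\prod_{\sigma\in G}\sigma(z)$ and grouping the factors by the cosets of the inertia subgroup $I=\ker(G\to\operatorname{Gal}(l/\kappa))$, which has order $e$ and acts trivially on $l$, one obtains $1=\overline{N_{L/F}(z)}=N_{l/\kappa}(\bar z)^{e}$. Since $\rho^{n/e}=\rho^{f}$ is a primitive $e$-th root of unity in $\kappa$, this forces $N_{l/\kappa}(\bar z)=\rho^{kf}$ for some integer $k$; as $N_{l/\kappa}(\rho^{k})=\rho^{kf}$, the element $\bar z\rho^{-k}$ lies in $T_{l/\kappa}(\kappa)$.

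Now I would invoke the hypothesis: $\bar z\rho^{-k}=r\rho^{em}$ with $r\in RT_{l/\kappa}(\kappa)$ and $m\in\Z$ (using $n/f=e$), so $\bar z=r\rho^{k+em}$. Using the description of $RT_{l/\kappa}(\kappa)$ as the group generated by the $\bar b^{-1}\tau(\bar b)$ with $\bar b\in l^{*}$ and $\tau\in\operatorname{Gal}(l/\kappa)$, I would lift each $\bar b$ to a unit of $S$ and each $\tau$ to an element of $G$, producing $\tilde r\in RT_{L/F}(F)$ with $\overline{\tilde r}=r$ (compatibility of the lifts with reduction uses precisely that $I$ is the kernel of $G\to\operatorname{Gal}(l/\kappa)$). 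Then $z':=z\,\tilde r^{-1}\rho^{-(k+em)}$ lies in $T_{L/F}(F)$ — note $\rho^{k+em}$ does, since $N_{L/F}(\rho^{k+em})=\rho^{(k+em)n}=1$ — and reduces to $1$ in $l$, so Lemma~\ref{1modulo_maximal_ideal} gives $z'\in RT_{L/F}(F)$. Hence $z=z'\tilde r\rho^{k+em}\in RT_{L/F}(F)<\rho>$; the reverse inclusion is immediate since $\rho\in T_{L/F}(F)$, giving the lemma.

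The step I expect to be the real obstacle is that reducing $z$ only yields $N_{l/\kappa}(\bar z)^{e}=1$, not $N_{l/\kappa}(\bar z)=1$, so $\bar z$ need not land in $T_{l/\kappa}(\kappa)$ on the nose. What unblocks this is the identity $n/e=f$, which makes the explicit correction factor $\rho^{-k}$ available (because $N_{l/\kappa}(\rho^{k})=\rho^{kf}$); the resulting extra power of $\rho$ is harmless precisely because $\rho$ is itself a norm from $L/F$. The only other point needing care is the compatible lifting of $R$-trivial elements from $l^{*}$ to $L^{*}$, which is routine given the generators description of $RT$ and the identification $G/I=\operatorname{Gal}(l/\kappa)$.
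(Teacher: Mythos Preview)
Your proof is correct and follows essentially the same route as the paper's: reduce the norm to the residue field (obtaining only $N_{l/\kappa}(\bar z)^e=1$), correct by a power of $\rho$ using $N_{l/\kappa}(\rho^k)=\rho^{kf}$ to land in $T_{l/\kappa}(\kappa)$, invoke the hypothesis, lift the $R$-trivial part via the surjection $G\twoheadrightarrow\operatorname{Gal}(l/\kappa)$, and finish with Lemma~\ref{1modulo_maximal_ideal}. Your write-up is in fact slightly more careful than the paper's in making explicit why $z$ is a unit and why the residue-norm formula holds (via inertia cosets), but the argument is the same.
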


\begin{proof} 
Let $R$ be the discrete valuation ring of $F$ 
and $S$ be the integral closure of $R$ in $L.$ 
Let $e$ be the ramification index of the extension $L/F.$ 
Then $n = ef.$ For any element $y \in S($resp. $R),$ 
we will use $\overline{y}$ to denote 
its image in the residue field $l($resp. $\kappa ).$ 
 
 Let $x \in L$ with $N_{L/F}(x)=1.$ 
 Then, $\displaystyle N_{l/\kappa}(\overline{x})^e 
 = \overline{N_{L/F}(x)} = 1.$ 
 Hence $\displaystyle N_{l/\kappa }(\overline{x}) = \rho _n^{fi}$ 
 for some $i$ with $0 \leq i < e.$ 
 Let $y=\rho _n^{-i}x.$ Then $\displaystyle N_{L/F}(y)=1$ 
 and $\displaystyle N_{l/\kappa }(\overline{y})= 
 N_{l/\kappa }(\rho _n^{-i})N_{l/\kappa }(\overline{x})= 
 \rho _n^{-fi} \rho_n^{fi}=1.$ 
 Thus $\overline{y} \in T_{l/\kappa}(\kappa)$ 
 and hence, by the assumption, $\overline{y} = \theta \rho_n^{ej}$ 
 for some $\theta \in RT_{l/\kappa}(\kappa)$ and $j$ an integer. 
 Write 
$$\displaystyle \theta = \prod _{\sigma \in Gal(l/\kappa )}
(a_{\sigma })^{-1} \sigma (a_{\sigma })$$ 
for some $a_{\sigma } \in l^{\times}.$ 
Since $Gal(l/\kappa)$ is a quotient of $Gal(L/F),$ 
for every $\sigma \in Gal(l/\kappa )$ 
we choose a lift $\tilde{\sigma} \in Gal(L/F)$ of $\sigma.$ 
Let $b_\sigma \in S$ with $\overline{b}_\sigma = a_\sigma$ and 
$$\displaystyle z = y^{-1} \rho_n^{ej} \prod _{\sigma \in Gal(l/\kappa )}
(b_{\sigma })^{-1} \tilde{\sigma }(b_{\sigma }).$$ 
Then $z \in T_{L/F}(F)$ and $\overline{z} = 1.$ 
Thus, by (\ref{1modulo_maximal_ideal}), $z \in RT_{L/F}(F).$ 
Therefore $y \in RT_{L/F}(F) < \rho_n>$ and 
hence $x \in RT_{L/F}(F) < \rho_n>.$
\end{proof}

\vspace{0.1in}

\begin{defin}
A complete discretely valued field $K$ 
with finite residue field is called a $1$-{\it local} field. 
For $m \geq 1$, 
a complete discretely valued field $K$ with 
$m$-local 
residue field $k$  is called 
a $(m+1)$-{\it local} field.
If $K$ is a 1-local field, the residue field of $K$ is called the 
 {\it first  residue} field of $K$.
 If $K$ is a $(m+1)$-local field with residue field $k$, then the first residue field
 of $k$ is called the {\it first residue field} of $K$.
\end{defin}

%\begin{assump}We assume throughout this paper that the $1$-local fields are nonarchimedean. In other words, the $1$-local fields considered here have finite residue field.\end{assump}
 
\vspace{0.1in}

\begin{cor} 
\label{mlocal}
Let $K$ be an $m$-local field with first residue field 
$\kappa$ or an iterated Laurent series in $m$ variables
over an algebraically closed field $\kappa$. 
Let $L/K$ be a finite Galois extension of 
degree $n.$ If $n$ is coprime to char$(\kappa)$ and 
$K$ contains  a primitive $n^{\rm th}$ root of unity 
$\rho_n,$ then $T_{L/K}(K)= RT_{L/K}(K)<\rho_n>.$
\end{cor}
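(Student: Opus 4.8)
The plan is to prove Corollary \ref{mlocal} by induction on $m$, using Lemma \ref{residue_field_assumption} as the inductive step and an absolute base case for $1$-local fields.

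\emph{Base case.} First I would treat the case $m=1$, i.e.\ $K$ a local field (a finite extension of $\mathbf{Q}_p$ or of $\mathbf{F}_q((t))$, or $\mathbf{R}$, $\mathbf{C}$; the relevant assumption $n$ coprime to $\mathrm{char}(\kappa)$ rules out the wild cases and the archimedean subtleties don't arise once $\rho_n \in K$ forces $n\le 2$ if $K=\mathbf{R}$). For $K$ local and $L/K$ cyclic, the norm one torus $T_{L/K}$ satisfies $T_{L/K}(K)/RT_{L/K}(K) \cong \widehat{H}^{-1}(\mathrm{Gal}(L/K), L^*)$, which by class field theory (periodicity of Tate cohomology for the cyclic group and the fact that $\widehat{H}^1(\mathrm{Gal}(L/K),L^*)=0$ by Hilbert 90 together with Hasse's norm theorem for the local situation, equivalently $\widehat{H}^{-1} \cong \widehat{H}^1 = 0$ up to the relevant shift) vanishes, so $T_{L/K}(K) = RT_{L/K}(K)$ and a fortiori $T_{L/K}(K) = RT_{L/K}(K)\langle\rho_n\rangle$. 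For $L/K$ Galois but not cyclic one reduces to the cyclic case: actually the cleanest route is to invoke that over a local field every element of the norm one torus of a Galois extension is $R$-trivial — this is classical (the Tate cohomology group $\widehat{H}^{-1}(G,L^*)$ vanishes for $G$ the Galois group of an extension of local fields, which can be seen from the cohomological triviality of $L^*$ as a $G$-module in the relevant degrees, or from \cite{CTS}). So in all cases $T_{L/K}(K) = RT_{L/K}(K)$, which certainly implies the asserted equality with $\langle\rho_n\rangle$ adjoined.

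\emph{Inductive step.} Now suppose $m \ge 2$ and the result holds for $m-1$-local fields. Let $K$ be $m$-local with residue field $k$ (which is $m-1$-local) and first residue field $\kappa$; note $k$ and $K$ have the same first residue field $\kappa$. Given $L/K$ finite Galois of degree $n$ with $(n,\mathrm{char}(\kappa))=1$ and $\rho_n \in K$, let $l$ be the residue field of $L$ and $f = [l:k]$. Since $K$ is complete discretely valued with residue field $k$ and contains $\rho_n$, Lemma \ref{residue_field_assumption} reduces the claim for $L/K$ to the statement $T_{l/k}(k) = RT_{l/k}(k)\langle \rho_n^{n/f}\rangle$. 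Here $l/k$ is a Galois extension (the residue extension of the Galois extension $L/K$) of degree $f$ dividing $n$, hence $(f,\mathrm{char}(\kappa))=1$, and $k$ is $m-1$-local with first residue field $\kappa$. One subtlety: to apply the inductive hypothesis to $l/k$ we need a primitive $f$-th root of unity in $k$; but $\rho_n \in K$ and $K$ is complete with $(n,\mathrm{char}(\kappa))=1$, so by Hensel's lemma $\mu_n \subset K$ maps isomorphically onto $\mu_n \subset k$, giving $\mu_n \subset k$, in particular $\rho_n^{n/f}$ reduces to a primitive $f$-th root of unity $\bar\rho_f \in k$. Then the inductive hypothesis gives $T_{l/k}(k) = RT_{l/k}(k)\langle \bar\rho_f\rangle = RT_{l/k}(k)\langle \rho_n^{n/f}\rangle$ (identifying the reduction), which is exactly the hypothesis needed for Lemma \ref{residue_field_assumption}, and we conclude $T_{L/K}(K) = RT_{L/K}(K)\langle\rho_n\rangle$.

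\emph{Main obstacle.} The routine bookkeeping — tracking that the first residue field is preserved, that roots of unity lift and reduce compatibly via Hensel, and that $f \mid n$ — is straightforward. The one genuine input is the base case: establishing $T_{L/K}(K) = RT_{L/K}(K)$ for $L/K$ Galois over a $1$-local field $K$. For cyclic $L/K$ this is the classical vanishing of $\widehat{H}^{-1}(\mathrm{Gal}(L/K),L^*)$ via local class field theory; the Galois (non-cyclic) case requires the statement that $L^*$ is a cohomologically trivial $\mathrm{Gal}(L/K)$-module in the appropriate range, equivalently the description of $T_{L/K}(K)/RT_{L/K}(K)$ via \cite{CTS} together with the vanishing of the relevant flasque-module cohomology over local fields. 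I would cite this rather than reprove it. Everything else is formal propagation through the tower.
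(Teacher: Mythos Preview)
Your proposal is correct and the inductive step matches the paper's, but your treatment of the base case differs from the paper's in a way worth noting. You establish $T_{L/K}(K)=RT_{L/K}(K)$ directly for $K$ a $1$-local field, which requires local class field theory (the Nakayama--Tate isomorphism $\hat H^i(G,L^*)\cong \hat H^{i+2}(G,\mathbf Z)$ giving $\hat H^{-1}(G,L^*)\cong \hat H^{1}(G,\mathbf Z)=0$). The paper instead pushes the base case one level further down: for $K$ $1$-local the residue field $\kappa$ is \emph{finite}, so every finite extension $l/\kappa$ is cyclic, and Hilbert~90 alone gives $T_{l/\kappa}(\kappa)=RT_{l/\kappa}(\kappa)$; then Lemma~\ref{residue_field_assumption} lifts this to $K$. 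In effect the paper treats $m=1$ not as a genuine base case but as the first instance of the same inductive mechanism, with the absolute base being the trivial statement over finite fields. This buys a completely elementary proof with no class field theory input, whereas your route needs the cohomological triviality of $L^*$ in degree $-1$ over local fields --- valid, but a heavier hammer than the situation requires. Your bookkeeping on roots of unity, $f\mid n$, and preservation of the first residue field is fine and matches the paper's implicit use.
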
 

\begin{proof}
 Every finite extension $l/\kappa$ is cyclic 
 and by Hilbert 90,  $T_{l/\kappa}(\kappa) = RT_{l/\kappa}(\kappa)$. 
 Thus, by (\ref{residue_field_assumption}), 
 $T_{L/K}(F)= RT_{L/K}(K)<\rho_n>.$ 
 The corollary follows by induction on $m$ and 
 by (\ref{residue_field_assumption}). 
 \end{proof}

\vspace{0.1in}

 \section{Two dimensional complete fields}

Let $F$ be a field with a discrete valuation $v$. 
 Let $\kappa(v)$ be the residue field of $v$. 
 Let $L/F$ be a finite extension and $w$ be a 
 discrete valuation on $L$ extending $v$.
 Let $e(w/v)$ be the ramification index of 
 $w$ over $v$. 
 For any field $E$, $a \in E^{\times}$ and $n \geq 1$, 
 let $E(\sqrt[n]{a})$ denote the field generated by 
 $E$ and $\sqrt[n]{a}$ in a fixed algebraic closure of $E$.\\

\vspace{0.1in} 

 \begin{lemma}
 \label{dvrextns}
 Let $F$ be a field with a discrete valuation $v$, 
 $\pi \in F^{\times}$ with $v(\pi) = 1$.
Let $L/F$ be a finite extension  of degree coprime to char$(\kappa(v))$ 
and $w$ be a discrete valuation of $L$ extending $v$. 
 Let $\ell$ be a prime not equal to char$(\kappa(v))$. 
 Then there is a unique discrete valuation $\tilde{v}$ on $F(\sqrt[\ell]{\pi})$ 
 extending $v$. Let $\tilde{w}$ on  $L(\sqrt[\ell]{\pi})$  be a discrete 
 valuation extending $w$. If  $\ell$ divides $e(w/v)$, 
 then $e(\tilde{w}/\tilde{v}) = e(w/v)/\ell$.
\end{lemma}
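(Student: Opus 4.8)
The plan is to reduce everything to two ingredients: a value-group computation showing that adjoining $\sqrt[\ell]{\pi}$ produces a totally ramified extension of degree $\ell$, and the standard fact that adjoining an $\ell$-th root of a unit is unramified when $\ell$ is invertible in the residue field. Normalize $v$, $w$ and all their extensions to have value group $\mathbf{Z}$, and set $e_0 = e(w/v)$, so that $w(\pi) = e_0$. Since $v(\pi) = 1$ is not divisible by $\ell$, we have $\pi \notin F^{*\ell}$, hence $X^\ell - \pi$ is irreducible over $F$ and $[F(\sqrt[\ell]{\pi}):F] = \ell$; also $\kappa(w)$ is an extension field of $\kappa(v)$, so it has the same characteristic and $\ell$ is invertible in it.

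First I would dispose of the uniqueness of $\tilde v$. If $\tilde v$ extends $v$ to $F(\sqrt[\ell]{\pi})$ then $\ell\,\tilde v(\sqrt[\ell]{\pi}) = \tilde v(\pi) = e(\tilde v/v)\,v(\pi) = e(\tilde v/v)$, so $\ell \mid e(\tilde v/v)$; together with the fundamental inequality $e(\tilde v/v)\,f(\tilde v/v) \le [F(\sqrt[\ell]{\pi}):F] = \ell$ this forces $e(\tilde v/v) = \ell$, $f(\tilde v/v) = 1$, and uniqueness. Put $M = L(\sqrt[\ell]{\pi})$. The restriction of $\tilde w$ to $F(\sqrt[\ell]{\pi})$ is an extension of $v$, hence equals $\tilde v$; multiplicativity of ramification indices along the towers $F \subseteq F(\sqrt[\ell]{\pi}) \subseteq M$ and $F \subseteq L \subseteq M$ gives
\[
e(\tilde w/\tilde v)\cdot \ell \;=\; e(\tilde w/\tilde v)\,e(\tilde v/v) \;=\; e(\tilde w/v) \;=\; e(\tilde w/w)\,e(w/v) \;=\; e(\tilde w/w)\,e_0 ,
\]
so it remains only to compute $e(\tilde w/w)$ for $M/L$ at $w$, in the two cases.

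If $\ell \nmid e_0$: then $w(\pi) = e_0$ is not divisible by $\ell$, so $\pi \notin L^{*\ell}$ and $[M:L] = \ell$; moreover $\ell\,\tilde w(\sqrt[\ell]{\pi}) = \tilde w(\pi) = e(\tilde w/w)\,e_0$ shows $\ell \mid e(\tilde w/w)\,e_0$, hence $\ell \mid e(\tilde w/w)$ as $\gcd(\ell,e_0) = 1$; with $e(\tilde w/w) \le [M:L] = \ell$ this gives $e(\tilde w/w) = \ell$, so $e(\tilde w/\tilde v) = e_0$. If $\ell \mid e_0$: choose $c \in L^*$ with $w(c) = e_0/\ell$; then $u := \pi c^{-\ell}$ is a $w$-unit and $M$ is generated over $L$ by $\sqrt[\ell]{\pi}\,c^{-1}$, a root of $X^\ell - u \in L[X]$. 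Since $\ell$ is invertible in $\kappa(w)$ and $\overline{u} \neq 0$, the polynomial $X^\ell - \overline{u}$ is separable over $\kappa(w)$; passing to the completion $L_w$ and applying Hensel's lemma, $M \otimes_L L_w$ decomposes as a finite product of unramified extensions of $L_w$, so $e(\tilde w/w) = 1$ and $e(\tilde w/\tilde v) = e_0/\ell$.

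I do not expect a genuine obstacle, as the lemma is in essence bookkeeping with value groups and the multiplicativity of $e$ in towers; the only point needing care is that $w$ itself need not be complete. This is why, in the case $\ell \mid e_0$, I pass to the completion $L_w$ and use the standard identification of the extensions of $w$ to $M$ with the factors of $M \otimes_L L_w$, each factor carrying the same local ramification index; Hensel's lemma is then available precisely because $\ell \neq \operatorname{char}\kappa(w)$. The same completion argument re-derives the remaining assertions if one prefers to avoid direct manipulation of value groups.
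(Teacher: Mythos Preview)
Your proof is correct and follows a genuinely different route from the paper's. The paper passes to the completion $F_v$, then replaces $F$ by the maximal unramified subextension $L^{nr}$ to reduce to the case where $L/F$ is totally (tamely) ramified, and invokes the structure theorem for such extensions to write $L = F(\sqrt[n]{u\pi})$; the two bullets are then handled by explicit manipulation of radicals inside $L(\sqrt[\ell]{\pi})$. You instead argue directly with value groups and the multiplicativity of $e$ in the two towers through $M = L(\sqrt[\ell]{\pi})$, reducing everything to the computation of $e(\tilde w/w)$; in the case $\ell \mid e_0$ you write $\pi = u\,c^{\ell}$ with $u$ a $w$-unit and use that adjoining an $\ell$-th root of a unit is unramified (via Hensel over $L_w$). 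Your approach is more elementary in that it avoids the structure theorem for tame extensions, and in fact never uses the hypothesis that $[L:F]$ is coprime to $\mathrm{char}\,\kappa(v)$ (only that $\ell \ne \mathrm{char}\,\kappa(v)$), so it proves a slightly stronger statement; the paper's approach, on the other hand, yields an explicit radical description of $L(\sqrt[\ell]{\pi})$, which is informative in its own right. One small expository point: for the uniqueness of $\tilde v$ you want the fundamental inequality in the summed form $\sum_i e_i f_i \le \ell$ over all extensions of $v$, not merely $ef \le \ell$ for a single one.
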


\begin{proof} 
Since $v(\pi) = 1$, $v$ is totally ramified in  $F(\sqrt[\ell]{\pi})$ 
and hence there is a unique extension 
$\tilde{v}$ of $v$ to $F(\sqrt[\ell]{\pi})$.

For the ramification index calculations, 
we can replace $F$ by $F_{v},$ the completion of 
$F$ with respect to the valuation $v$ and 
hence may assume that $F$ is complete. 
Let $L^{nr}$ be the maximal unramified 
subextension of $L/F$. 
Since the ramification index of $L/L^{nr}$ 
is same as the ramification index of $L/F$, replacing $F$ by $L^{nr}$, 
we may assume that $L/F$ is totally ramified. 
Since $n = e = [L : F]$ is coprime to char$(\kappa(v))$, 
we have $L = F(\sqrt[n]{u\pi})$ for some $u \in F$ with $v(u) =0$ 
(\cite[Proposition 1, p-32]{CFANT}). 

By hypothesis, we have that $\ell$ divides $n$.
Suppose that $u \in F^{\times \ell}$. 
Then $F(\sqrt[\ell]{\pi}) \subseteq L$ and 
hence $L/F(\sqrt[\ell]{\pi})$ is a 
totally ramified extension of degree $n/\ell$. 
Now suppose that $u \not \in F^{\times \ell}$. 
Then $L(\sqrt[\ell]{\pi}) = 
F(\sqrt[\ell]{\pi})(\sqrt[\ell]{u})(\sqrt[n/\ell]{\sqrt[\ell]{u\pi}})$. 
Hence the ramification index of the extension 
$L(\sqrt[\ell]{\pi})/F(\sqrt[\ell]{\pi})$ is $n/\ell$.
\end{proof}

\vspace{0.1in}

 \begin{notation} 
\label{notation_A_pi}
Let $A$ be a complete regular local  ring of dimension $2$ 
with residue field $\kappa$ and fraction field $F$. 
Let $\mathfrak{m} = (\pi_1,   \pi_2)  \subset A$ be the maximal ideal of $A$. 
Then, for $i=1, 2,$ we denote by $\widehat{A}_{(\pi_i)}$ be the completion 
of the local ring $A_{(\pi_i)}$ with respect to the ideal $(\pi_i)$ 
and by $ F_{\pi _i}$ the fraction field of $\widehat{A}_{(\pi_i)}.$
\end{notation}
 
We are studying these fields since 
the fields $F_P$ appearing in the 
patching setup (see Subsection \ref{prelim_patching}) are 
fraction fields of complete regular local rings 
of dimension $2$ and the branch fields 
are obtained as completions of the fields 
$F_P$ as discussed above. \\

 \vspace{0.1in}
 
\subsection{Structure of extensions of two dimensional complete fields} 

Let $F$ be as in Notation \ref{notation_A_pi}. 
In this subsection, we study Galois extensions of $F$.\\

In the following lemma, we prove that with some 
assumptions, a field extension $L$ over $F$ remains 
a field after base change to the completion $F_{\pi_i}$ 
for $i=1, 2:$

\vspace{0.1in}

\begin{lemma} 
\label{2dimextns}
Let $A$ be a complete regular local  ring of dimension $2$ 
with residue field $\kappa$ and fraction field $F$. 
Let $L/F$ be a field extension of degree $n$ 
where $n$ is coprime to char$(\kappa)$. 
Let $\mathfrak{m} = (\pi_1,   \pi_2)  \subset A$ be the maximal ideal of $A$. 
Suppose that $L/F$ is unramified on $A$ except possibly at 
$\pi_1$ and $\pi_2 \in A$. 
Then  $L\otimes_F F_{\pi_i}$ is a field for $i=1, 2$. 
\end{lemma}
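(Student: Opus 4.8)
The plan is to reduce to the complete discretely valued case and then apply Lemma~\ref{dvrextns} repeatedly. First I would observe that $F_{\pi_i}$ is the fraction field of $\widehat{A}_{(\pi_i)}$, which is a complete discrete valuation ring: its valuation $v_i$ is the $\pi_i$-adic valuation, with residue field $\kappa(\pi_i)$ the fraction field of the one-dimensional complete local ring $A/(\pi_i)$. Since $L\otimes_F F_{\pi_i}$ is an étale algebra over $F_{\pi_i}$, it is a field precisely when there is a \emph{unique} discrete valuation of $L$ extending $v_i$; equivalently, the integral closure of $\widehat{A}_{(\pi_i)}$ in $L\otimes_F F_{\pi_i}$ is a local ring. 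So the statement to prove is: $v_i$ has a unique extension to $L$.

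The key step is to use the hypothesis that $L/F$ is unramified on $A$ away from $\pi_1$ and $\pi_2$, together with the fact that $[L:F]=n$ is coprime to $\mathrm{char}(\kappa)$, to write $L$ explicitly in terms of radicals of $\pi_1$ and $\pi_2$. Concretely, I would argue that after passing to completions and peeling off the maximal unramified subextension at $v_i$ (which, being unramified, contributes a residually separable extension and does not split $v_i$ into several valuations only if we are careful — see below), the totally ramified part of $L\otimes_F F_{\pi_i}$ over the unramified part is obtained by adjoining an $\ell$-th root of a uniformizer for various primes $\ell\mid n$. Since $v_i(\pi_i)=1$ and $\pi_i$ is a uniformizer, adjoining $\sqrt[\ell]{u\pi_i}$ (with $u$ a unit) gives a totally ramified, hence non-split, extension; and by Lemma~\ref{dvrextns} the ramification behavior is controlled so that at each stage there remains a unique valuation above $v_i$. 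Purity of the branch locus / the Abhyankar-type structure of tame extensions of a regular two-dimensional local ring ramified only along $\pi_1\pi_2$ gives that $L$ is, up to an unramified extension, of the form $F(\sqrt[m_1]{u_1\pi_1^{a_1}},\sqrt[m_2]{u_2\pi_2^{a_2}})$ or a subfield thereof; localizing at $\pi_i$ kills the $\pi_j$-part ($j\ne i$) as an unramified twist and leaves a single totally ramified tower over $F_{\pi_i}^{nr}$.

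The main obstacle I expect is handling the \emph{unramified} part at $v_i$ correctly: an unramified extension of a complete discretely valued field need not be inert — $L\otimes_F F_{\pi_i}$ could a priori be a product of fields coming from the residue extension $\kappa(\pi_i)'/\kappa(\pi_i)$ not being a field. So I must use the global structure over $A$: because $L/F$ is unramified along the generic point of $\mathrm{Spec}(A/(\pi_i))$ and $A/(\pi_i)$ is a complete one-dimensional local domain, the normalization of $A/(\pi_i)$ in the residue extension is again local (a complete one-dimensional local domain has local normalization), so the residue extension at $v_i$ \emph{is} a field. Combined with the totally ramified radical tower above it, this forces $L\otimes_F F_{\pi_i}$ to be a field. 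I would therefore organize the proof as: (1) identify $v_i$ and its residue field; (2) show the unramified part gives a field using locality of the normalization of $A/(\pi_i)$; (3) show the ramified part is a totally ramified radical tower, non-split at each step, by Lemma~\ref{dvrextns} and the Kummer description of tame totally ramified extensions; (4) conclude.
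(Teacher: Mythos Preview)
There is a genuine gap. Your step~(2) asserts that ``$L/F$ is unramified along the generic point of $\mathrm{Spec}(A/(\pi_i))$'', but that generic point is precisely the height-one prime $(\pi_i)$, one of the two places where ramification is explicitly \emph{allowed} by hypothesis. More structurally, you want to peel off the maximal unramified subextension of $L\otimes_F F_{\pi_i}$ and then handle a totally ramified radical tower on top; but that decomposition is a statement about a \emph{field} extension of a complete discretely valued field, and $L\otimes_F F_{\pi_i}$ is not yet known to be a field---each factor would carry its own unramified/totally-ramified filtration, so the argument is circular. Your appeal to an Abhyankar-type global description of $L$ over $A$ has the same defect within the paper's logical order: Lemmas~\ref{2dimGalextnsram}, \ref{2dimGalextns1} and Theorem~\ref{2dimGalextns} all \emph{use} Lemma~\ref{2dimextns} in their proofs, and in any case the present lemma assumes neither that $L/F$ is Galois nor that $F$ contains enough roots of unity for a Kummer description.

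The paper avoids all of this by a direct induction on $m:=\max_{i,j} e(w_i^j/v_i)$, taken over both primes $\pi_1,\pi_2$ and all extensions $w_i^j$ of $v_i$ to $L$. If $m=1$ then $L/F$ is unramified everywhere on $A$; the integral closure $\tilde A$ of $A$ in $L$ is then a complete regular local ring with maximal ideal $(\pi_1,\pi_2)$, so each $\pi_i$ remains prime in $\tilde A$ and $v_i$ extends uniquely. If $m>1$, choose a prime $\ell\mid m$, set $E=F(\sqrt[\ell]{\pi_1})$ and $M=L(\sqrt[\ell]{\pi_1})$; the integral closure of $A$ in $E$ is again complete regular local with parameters $(\sqrt[\ell]{\pi_1},\pi_2)$, $M/E$ satisfies the same ramification hypothesis there, and Lemma~\ref{dvrextns} shows that every ramification index for $M/E$ is at most the corresponding one for $L/F$, with those equal to $m$ dropping to $m/\ell$. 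By induction each $\tilde v_i$ extends uniquely to $M$, hence each $v_i$ extends uniquely to the subfield $L\subset M$. No Galois hypothesis, no roots of unity, and no prior structure theorem are required.
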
 

\begin{proof}  
Let $v_i$ be the discrete valuation of $F$ 
given by $\pi_i$, for $i = 1, 2$.
To show that $L\otimes_F F_{\pi_i}$ is a field, 
it is enough to show that there is 
a unique extension of $v_i$ to a discrete valuation 
on $L$. 

Let $w_i^j$ be the extensions of the valuations 
$v_i$ to $L$. 
Let $m$ be the maximum of $e(w_i^{j_i}/v_i)$ 
for $i = 1, 2;$ where $1 \leq j_i \leq n_i$ for some 
positive integers $n_1$ and $n_2$. 
Since each $e(w_i^{j_i}/v_i) \geq 1$, $m \geq 1$. 
We prove the result by induction on $m$.

%We arrange $e(w_i^{j_i}/v_i)$ in descending order and consider the ordered tuple $\alpha$ with $m$ as the first entry followed by $e(w_i^{j_i}/v_i)$ in descending order as discussed above. We prove the result by induction on the tuples $\alpha$; where we consider the lexicographic ordering on such tuples. 
 
Suppose that $m = 1$. 
Then $e(w_i^{j_i}/v_i)= 1$ for 
for $i = 1, 2$ and $1 \leq j_i \leq n_i$. 
Hence $L/F$ is unramified at $\pi_i$ for $i = 1, 2$. 
Since $L/F$ is unramified on $A$ except possibly at $\pi_1, \pi_2$, 
 $L/F$ is unramified on $A$. 
Let $\tilde{A}$ be the integral closure of $A$ in $L$. 
Then $\tilde{A}$ is again a complete regular local ring of dimension 
$2$ with $(\pi_1, \pi_2)$ as maximal ideal and fraction field $L$. 
Thus $\pi_i$ remains a prime over $\tilde{A}$. 
Hence there is a unique extension of 
$v_i$ to a discrete valuation of $L$. 
Hence $L\otimes_F F_{\pi_i} \cong L_{\pi_i}$ 
is a field.

Now suppose that $ m > 1$. 
Let $\ell$ be a prime which divides $m$. 
Let $E = F(\sqrt[\ell]{\pi_1}, \sqrt[\ell]{\pi_2})$ and 
$M = L(\sqrt[\ell]{\pi_1}, \sqrt[\ell]{\pi_2})$. 
Let $B$ be the integral closure of $A$ in $E$. 
Then, by (\cite[Corollary 3.3.]{PS}), $B$ is a regular local ring 
with maximal ideal 
$(\pi_1', \pi_2');$ where 
$\pi_1' = \sqrt[\ell]{\pi_1}$ and 
$\pi_2' = \sqrt[\ell]{\pi_2}$. 
Then  $M/E$ is unramified on $B$ except possibly 
 at $\pi_1'$ and $\pi_2'$. 
 Since $(\pi_1, \pi_2)$ is the maximal ideal of $A$, 
 it follows that  there is a unique extension of $v_i$ to $E$, 
 which we denote by $\tilde{v}_i$. 
 Let $\omega$ be a discrete valuation of $M$ 
 extending $\tilde{v}_i$ for some $i$. 
 Then the restriction of $\omega$ to $L$ is 
 equal to $w_i^{j_i}$ for some $j_i$.
 Let $E_i = F(\sqrt[\ell]{\pi_i})$ and 
 $M_i = L(\sqrt[\ell]{\pi_i})$.
 Let $\omega'$ and $v_i'$ be 
 the restrictions of $\omega$ and $\tilde{v}_i$
 to $M_i$ and $E_i$ respectively.  
 Suppose that $e(w_i^{j_i}/v_i) = m$. 
 Then, by (\ref{dvrextns}), 
 $e(\omega'/v'_i) =  e(w_i^{j_i}/v_i)/\ell$ and  
 $e(\omega/\tilde{v}_i) \leq e(\omega'/v'_i) =  
 e(w_i^{j_i}/v_i)/\ell$. 
  Hence,  by induction hypothesis, for each $i = 1, 2$, 
 there is a unique extension of $\tilde{v_i}$ to  $M$. 
 Since $L$ is a subfield of $M$, 
 there is a unique extension of $v_i$ to $L$.  
 Hence $L\otimes_F F_{\pi_i}$ is a field.
\end{proof}

The next lemma describes the structure of 
Galois extensions $L/F$ which are unramified on $A$ 
except possibly at $\pi_1$ and $\pi_2 \in A$ 
and totally ramified at $\pi_2$.

\vspace{0.1in}

\begin{lemma} 
\label{2dimGalextnsram}
Let $A$ be a complete regular local 
ring of dimension $2$ with residue field 
$\kappa$ and fraction field $F$. 
Let $L/F$ be a Galois  extension of degree 
$n$ where $n$ is coprime to char$(\kappa)$. 
Let $\mathfrak{m} = (\pi_1,   \pi_2)  \subset A$ 
be the maximal ideal of $A$. 
Suppose that $L/F$ is unramified on $A$ 
except possibly at $\pi_1$ and   $\pi_2  \in A$ 
and totally ramified at $\pi_2$.  
Then $L = F(\sqrt[n]{u\pi_1^m \pi_2})$ 
for some $u \in A$ a unit and 
some integer $m$. 
\end{lemma}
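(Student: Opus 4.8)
The plan is to reduce $L/F$ to a radical (Kummer) extension and then read off the defining element from divisorial data in the UFD $A$. First I would localise at the prime $(\pi_2)$: since $L/F$ is unramified on $A$ away from $(\pi_1)$ and $(\pi_2)$, Lemma \ref{2dimextns} gives that $L_{\pi_2}:=L\otimes_F F_{\pi_2}$ is a field, and by hypothesis $L_{\pi_2}/F_{\pi_2}$ is Galois, totally ramified, of degree $n$ with $n$ prime to the residue characteristic; hence it is totally \emph{tamely} ramified, and by the proposition of \cite[p.~32]{CFANT} already used above, $L_{\pi_2}=F_{\pi_2}(\sqrt[n]{\varpi})$ for some uniformiser $\varpi$ of $F_{\pi_2}$.

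Next I would extract that $F$ already contains the $n$-th roots of unity and that $\mathrm{Gal}(L/F)$ is cyclic. Being Galois with $n$ prime to the residue characteristic, $L_{\pi_2}$ contains all $n$-th roots of unity (the conjugates $\zeta\sqrt[n]{\varpi}$ of $\sqrt[n]{\varpi}$); reducing to the residue field $\kappa(\pi_2)=\mathrm{Frac}\big(A/(\pi_2)\big)$, which is harmless on $\mu_n$ since $n$ is prime to the characteristic, and then using that a root of unity is a unit of the DVR $A/(\pi_2)$ and reduces injectively to $\kappa$, followed by Hensel's lemma over the complete ring $A$, gives $\mu_n\subset A\subset F$. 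Then $L_{\pi_2}/F_{\pi_2}$ is a Kummer extension, hence cyclic, and since $L_{\pi_2}=L\otimes_F F_{\pi_2}$ is a field we get $\mathrm{Gal}(L/F)\cong\mathrm{Gal}(L_{\pi_2}/F_{\pi_2})$, cyclic of order $n$. Kummer theory over $F$ now yields $L=F(\sqrt[n]{a})$ for some $a\in F^*$ whose class in $F^*/F^{*n}$ has order exactly $n$.

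It remains to normalise $a$. Recall that $A$, being complete regular local, is a UFD (Auslander--Buchsbaum). For any height-one prime $(p)$ of $A$ other than $(\pi_1)$ and $(\pi_2)$, $L/F$ is unramified at the valuation $v_p$, and comparing the value of $\sqrt[n]{a}$ with that of $a$ in this unramified extension forces $n\mid v_p(a)$; absorbing all such factors into $F^{*n}$, we may take $a=u\,\pi_1^{m_1}\pi_2^{m_2}$ with $u\in A^*$. Total ramification at $(\pi_2)$ then forces $\gcd(m_2,n)=1$: if $d=\gcd(m_2,n)>1$ and $n'=n/d$, then modulo $F_{\pi_2}^{*n}$ the element $a^{n'}$ is a unit, so $F_{\pi_2}\big((\sqrt[n]{a})^{n'}\big)$ is an unramified subextension of $L_{\pi_2}/F_{\pi_2}$ of degree $d>1$, contradicting total ramification. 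Choosing $b$ with $bm_2\equiv 1\pmod n$ and replacing $a$ by $a^b$ (legitimate since $\gcd(b,n)=1$, so $F(\sqrt[n]{a^b})=F(\sqrt[n]{a})=L$) reduces the exponent of $\pi_2$ to $1$ modulo $n$; after also reducing the exponent of $\pi_1$ modulo $n$ we obtain $L=F(\sqrt[n]{u'\pi_1^{m}\pi_2})$ with $u'\in A^*$ and $0\le m<n$, which is the assertion.

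The main obstacle is the second step: upgrading the bare hypothesis "$n$ prime to the residue characteristic" to the genuine statements "$\mu_n\subset F$" and "$\mathrm{Gal}(L/F)$ cyclic", which is what converts the abstract Galois extension into an explicit radical extension; this is exactly where the localisation at $(\pi_2)$, Lemma \ref{2dimextns}, and the structure theory of tame totally ramified extensions do the real work. One must also be careful that "totally ramified at $\pi_2$" genuinely rules out nontrivial unramified subextensions over $F_{\pi_2}$ — that is precisely the input yielding $\gcd(m_2,n)=1$ — whereas the remaining unique-factorisation bookkeeping in $A$ and the elementary ramification estimates for radical extensions are routine.
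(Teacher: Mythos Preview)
Your proposal is correct and follows essentially the same route as the paper's proof: localise at $\pi_2$ and use Lemma~\ref{2dimextns} together with the structure of totally tamely ramified extensions (\cite[p.~32]{CFANT}) to see that $L_{\pi_2}/F_{\pi_2}$ is cyclic Kummer, descend $\mu_n$ down to $\kappa$ and Hensel-lift it to $F$, apply Kummer theory to write $L=F(\sqrt[n]{a})$, and then normalise $a$ via the UFD property of $A$ and the ramification hypotheses. Your write-up spells out in more detail the UFD bookkeeping and the $\gcd(m_2,n)=1$ step that the paper compresses into a single sentence, but the argument is the same.
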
 

\begin{proof} 
Let $G$ be the Galois group of $L/F$. 
Since the degree of $L/F$ is coprime to char$(\kappa)$ 
and $L/F$ is unramified on $A$ except possibly at $\pi_1$ and 
$\pi_2$, by (\ref{2dimextns}), $L\otimes_F F_{\pi_2}$ is a field. 
Since $L\otimes_F F_{\pi_2}/F_{\pi_2}$ is a totally tamely ramified extension, 
 $F_{\pi_2}$ contains a  primitive $n^{\rm th}$ root of unity and we have 
 $L\otimes_F F_{\pi_2}  = F_{\pi_2}(\sqrt[n]{\theta \pi_2})$
for some $\theta \in F_{\pi_2}$ which is a unit in the discrete valuation 
ring of $F_{\pi_2}$ by (\cite[Proposition 1, p-32]{CFANT}). 
In particular $G$ is a cyclic group. 
Since $F_{\pi_2}$ contains a   primitive $n^{\rm th}$ root of unity, 
the residue field $\kappa(\pi_2)$ of $F_{\pi_2}$ 
contains a  primitive $n^{\rm th}$ root of unity. 
Since $\kappa $   is the residue field of $\kappa(\pi_2)$, $\kappa$ 
also contains a  primitive $n^{\rm th}$ root of unity.
Since $A$ is complete, by Hensel's lemma, 
$F$ contains a primitive $n^{\rm th}$ root of unity.

Hence $L = F(\sqrt[n]{a})$ for some $a \in F$. 
Since $L/F$ is unramified on $A$ except possibly at $\pi_1, \pi_2$, 
we can choose $a = u \pi_1^m \pi_2^d$ for some $ u \in A$ a unit 
and integers $m, d$. 
Since $L/F$ is totally ramified at $\pi_2$, 
$d$ is coprime to $n$ and 
hence we can assume that $d = 1$. 
\end{proof}

Next we consider Galois extensions $L/F$ 
which are unramified on $A$ except possibly at $\pi_1$.

\vspace{0.1in}

\begin{lemma} 
\label{2dimGalextns1}
Let $A$ be a complete regular local  ring of dimension $2$ 
with residue field $\kappa$ and fraction field $F$. 
Let $L/F$ be a Galois  extension of degree coprime to char$(\kappa)$. 
Let $\mathfrak{m} = (\pi_1,   \pi_2)  \subset A$ be the maximal ideal of $A$. 
Suppose that $L/F$ is unramified on $A$ except possibly at $\pi_1$. 
Then there exists  a subextension $L_1/F$  of $L/F$ such that:\\

$\bullet$ $L_1/F$ is unramified on $A,$ and \\

$\bullet$ $L = L_1(\sqrt[e]{u\pi_1})$  for some unit $u$ 
in the integral closure of $A$ in $L_1$. \\
 
\end{lemma}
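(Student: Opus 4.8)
The plan is to peel off the inertia at $\pi_1$. Write $G = \mathrm{Gal}(L/F)$. Since $L/F$ is unramified on $A$ except possibly at $\pi_1$, a fortiori except at $\pi_1$ and $\pi_2$, Lemma \ref{2dimextns} shows that $L\otimes_F F_{\pi_1}$ is a field, so there is a \emph{unique} prime $\mathfrak{P}$ of the integral closure $B$ of $A$ in $L$ lying over the height one prime $(\pi_1)$ of $A$. Hence the decomposition group of $\mathfrak{P}$ in $G$ is all of $G$, and its inertia group $I$ is normal in $G$. Because $[L:F]$ is coprime to $\mathrm{char}(\kappa)$, the extension $L/F$ is tamely ramified, so $I$ is cyclic of order $e := e(\mathfrak{P}/(\pi_1))$.

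Now set $L_1 = L^{I}$; since $I \trianglelefteq G$, the extension $L_1/F$ is Galois, with $[L:L_1]=e$. The claim is that $L_1/F$ is unramified on $A$. Indeed, $L/L_1$ is totally ramified at $\mathfrak{P}$ (its Galois group is $I$), so, writing $B_1$ for the integral closure of $A$ in $L_1$, the ramification index of $L_1/F$ at $\mathfrak{P}\cap B_1$ is $e/e = 1$; at every other height one prime of $A$ the extension $L_1/F$ is unramified because $L/F$ is; and all residue field extensions that occur are separable, their degrees dividing $[L:F]$ which is coprime to $\mathrm{char}(\kappa)$. Thus $L_1/F$ is unramified at every height one prime of $A$, so by purity of the branch locus the finite extension $B_1/A$ is \'etale. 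In particular $B_1$ is a complete regular local ring of dimension $2$ with maximal ideal $(\pi_1,\pi_2)$, and $\pi_1$ is a prime element of $B_1$; moreover $B$ has a unique prime over $(\pi_1)$ forces $B_1$ to have a unique prime over $(\pi_1)$, namely $(\pi_1)B_1$.

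Finally, $L/L_1$ is Galois of degree $e$ coprime to $\mathrm{char}(\kappa)$ which, inheriting the ramification behaviour of $L/F$, is unramified on $B_1$ except at $(\pi_1)$ and totally ramified at $(\pi_1)$. Applying Lemma \ref{2dimGalextnsram} to $L/L_1$ over $B_1$, with the roles of $\pi_1$ and $\pi_2$ interchanged so that $\pi_1$ is the totally ramified parameter, yields $L = L_1(\sqrt[e]{u\,\pi_2^{m}\pi_1})$ for some unit $u\in B_1$ and some integer $m$. Since $L/L_1$ is unramified at every prime of $B_1$ over $(\pi_2)$, the $(\pi_2)$-adic valuation of the radicand, which equals $m$, is divisible by $e$; writing $m = e m'$ we get $u\,\pi_2^{m}\pi_1 = u\,(\pi_2^{m'})^{e}\,\pi_1$, so the radicand differs from $u\pi_1$ by an $e$-th power. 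Hence $L = L_1(\sqrt[e]{u\pi_1})$ with $u$ a unit of the integral closure $B_1$ of $A$ in $L_1$, which is the assertion.

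The main obstacle is the middle step: showing that $L_1/F$ is genuinely unramified on all of $A$ (not merely away from $\pi_1$), so that $B_1$ is again a complete regular local ring of dimension $2$ with the \emph{same} regular system of parameters $(\pi_1,\pi_2)$, which is exactly what is needed to feed $L/L_1$ back into Lemma \ref{2dimGalextnsram}. This combines the ramification-index bookkeeping for the tower $L/L_1/F$ with purity of the branch locus; once it is in place, the rest is Kummer-theoretic bookkeeping modulo $e$-th powers.
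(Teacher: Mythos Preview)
Your argument is correct and follows essentially the same route as the paper: take the inertia group $I$ at the unique prime over $(\pi_1)$ (using Lemma~\ref{2dimextns}), set $L_1 = L^I$, verify $L_1/F$ is unramified on $A$ so that the integral closure $B_1$ is again complete regular local with maximal ideal $(\pi_1,\pi_2)$, then apply Lemma~\ref{2dimGalextnsram} to $L/L_1$ and absorb the $\pi_2^m$ factor as an $e$-th power. You spell out a few points the paper leaves implicit (normality of $I$ via the decomposition group, purity of the branch locus), but the strategy is identical.
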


\begin{proof} 
Let $G$ be the Galois group of $L/F$. 
 Since $L\otimes_F F_{\pi_1}/F_{\pi_1}$ is a field extension 
 (\ref{2dimextns}), the Galois group 
 $Gal(L\otimes_F F_{\pi_1}/F_{\pi_1})$ 
 is isomorphic to $G$. 
 We will identify these two groups. 
 We consider the inertia group $H$ of the extension 
 $L\otimes_F F_{\pi_1}/F_{\pi_1}$ which is a subgroup of $G$.
 Let $L_1  = L^H$. Then $(L\otimes_F F_{\pi_1})^H = L_1 \otimes_F F_{\pi_1}$
  is unramified over $F_{\pi_1}$ by (\cite[Theorem 2, p-27]{CFANT}).
 Hence $({L_1})_{\pi_1} \cong L_1 \otimes_F F_{\pi_1}$ is unramified over
 $F_{\pi_1}$ and $L_1/F$ is unramified at $\pi_1$.
 Since $L/F$ is unramified on $A$ except possibly at $\pi_1$, 
 $L_1/F$ is unramified on $A$. 
 Then the integral closure  $B$  of $A$ in $L_1$ is a regular local ring with maximal ideal $(\pi_1, \pi_2)$. 
 Let $e = [L : L_1]$. 
 Since $L/F$ is unramified on $A$ except possibly at $\pi_1$, 
  $L/L_1$ is unramified on $B$ except possibly at 
 $\pi_1$. Hence by (\ref{2dimGalextnsram}), 
 with the roles of $\pi_1$ and $\pi_2$ interchanged, 
 we have $L = L_1(\sqrt[e]{u\pi_2^m \pi_1})$ for some $u \in B$ a unit. 
 Since $L/L_1$ is unramified on $B$ except possibly at $\pi_1$,  $m$ is divisibly by $e$ and hence 
 $L = L_1(\sqrt[e]{u \pi_1})$.
 \end{proof}

The next theorem describes the 
structure of Galois extensions $L/F$ 
unramified on $A$ except possibly 
at $\pi_1$ and $\pi_2 \in A$, hence generalising 
(\ref{2dimGalextnsram}) and (\ref{2dimGalextns1}).

\vspace{0.1in}

\begin{theorem} 
\label{2dimGalextns}
Let $A$ be a complete regular local ring of dimension $2$ 
with residue field $\kappa$ and fraction field $F$. 
Let $L/F$ be a Galois  extension of degree 
coprime to char$(\kappa)$. 
Let $\mathfrak{m} = (\pi_1,   \pi_2)  \subset A$ be 
the maximal ideal of $A$. 
Suppose that $L/F$ is unramified on 
$A$ except possibly at $\pi_1$ and $\pi_2 \in A$. 
Then there exists subfields $L_1$ and $L_2$ of $L$ such that:\\

$\bullet$ $F \subseteq L_1 \subseteq L_2 \subseteq L,$ \\

$\bullet$ $L_1/F$ is unramified on $A,$ \\

$\bullet$ $L_2 = L_1(\sqrt[d_1]{u\pi_1})$  for some unit $u$ in the integral closure of $A$ in $L_1,$ and\\

$\bullet$ $L = L_2(\sqrt[d_2]{v (\sqrt[d_1]{u\pi_1})^i \pi_2})$ for some unit $v$ in the integral closure of $A$ in $L_2$.
\end{theorem}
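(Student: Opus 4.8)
The plan is to peel off $L/F$ by the inertia group at the $\pi_2$-adic valuation and then reduce to the two cases already treated in Lemmas \ref{2dimGalextnsram} and \ref{2dimGalextns1}. Write $G = \mathrm{Gal}(L/F)$ and let $v_2$ be the discrete valuation of $F$ given by $\pi_2$. Since $[L:F]$ is coprime to $\mathrm{char}(\kappa)$ and $L/F$ is unramified on $A$ away from $\pi_1$ and $\pi_2$, Lemma \ref{2dimextns} gives that $L \otimes_F F_{\pi_2}$ is a field; equivalently there is a unique discrete valuation $w$ of $L$ lying over $v_2$, its decomposition group is all of $G$, and $\mathrm{Gal}(L \otimes_F F_{\pi_2}/F_{\pi_2}) \cong G$. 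Let $H \subseteq G$ be the inertia subgroup of $w$ (equivalently, of $\mathrm{Gal}(L\otimes_F F_{\pi_2}/F_{\pi_2})$), which is cyclic because the extension is tamely ramified. Since $w$ is the only valuation of $L$ over $v_2$, $H$ is normal in $G$; hence $L_2 := L^H$ is Galois over $F$ and $L/L_2$ is Galois with group $H$, of order $d_2 := |H|$.

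Next I would verify that $L_2/F$ is unramified at $\pi_2$. Indeed $L_2 \otimes_F F_{\pi_2} = (L \otimes_F F_{\pi_2})^H$ is the maximal unramified subextension of $L \otimes_F F_{\pi_2}/F_{\pi_2}$, so it is unramified over $F_{\pi_2}$ (as in the proof of Lemma \ref{2dimGalextns1}, via \cite[Theorem 2, p-27]{CFANT}), and therefore ${L_2}_{\pi_2} \cong L_2 \otimes_F F_{\pi_2}$ is unramified over $F_{\pi_2}$, i.e. $L_2/F$ is unramified at $\pi_2$. Together with the hypothesis, $L_2/F$ is unramified on $A$ except possibly at $\pi_1$. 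Applying Lemma \ref{2dimGalextns1} to $L_2/F$ then yields a subfield $L_1$ with $F \subseteq L_1 \subseteq L_2$, with $L_1/F$ unramified on $A$, and with $L_2 = L_1(\sqrt[d_1]{u\pi_1})$ for some unit $u$ in the integral closure of $A$ in $L_1$. Setting $\pi_1' := \sqrt[d_1]{u\pi_1}$, this already gives the first three assertions and the chain $F \subseteq L_1 \subseteq L_2 \subseteq L$.

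It remains to bring $L/L_2$ into the asserted form. Let $B$ be the integral closure of $A$ in $L_2$. Since $L_1/F$ is unramified on $A$, the integral closure of $A$ in $L_1$ is a complete regular local ring of dimension $2$ with maximal ideal $(\pi_1, \pi_2)$; and then, exactly as inside the proof of Lemma \ref{2dimextns} (using \cite[Lemma 3.2.]{PS}), $B$ is a complete regular local ring of dimension $2$ with maximal ideal $(\pi_1', \pi_2)$. The extension $L/L_2$ is Galois of degree $d_2$ coprime to $\mathrm{char}(\kappa)$; since every height-one prime of $B$ lies over $(\pi_1)$, over $(\pi_2)$, or over a height-one prime of $A$ at which $L/F$ (and hence $L/L_2$) is unramified, $L/L_2$ is unramified on $B$ except possibly at $\pi_1'$ and $\pi_2$. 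Moreover $L/L_2$ is totally ramified at the unique valuation of $L_2$ over $v_2$: the completion of that valuation is $L_2 \otimes_F F_{\pi_2}$, and $L \otimes_{L_2} (L_2 \otimes_F F_{\pi_2}) = L \otimes_F F_{\pi_2}$ has Galois group over $L_2 \otimes_F F_{\pi_2}$ equal to the inertia group $H$, so the extension there is totally ramified. Hence Lemma \ref{2dimGalextnsram} applies to $L/L_2$ with the regular local ring $B$ and parameters $(\pi_1', \pi_2)$, producing $L = L_2(\sqrt[d_2]{v\,(\sqrt[d_1]{u\pi_1})^i \pi_2})$ for some unit $v$ in $B$ and some integer $i$. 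This is the fourth assertion.

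The step I expect to be the main obstacle is the bookkeeping around the $\pi_2$-adic inertia: checking that $H$ is normal in $G$ so that $L/L_2$ is Galois, that $L_2 \otimes_F F_{\pi_2}$ is precisely the maximal unramified subextension (so $L_2/F$ is unramified at $\pi_2$), and that $L/L_2$ becomes totally ramified at $\pi_2$ — together with confirming that the integral closure $B$ of $A$ in $L_2$ is regular local with maximal ideal $(\pi_1', \pi_2)$, which is what lets us feed $L/L_2$ into Lemma \ref{2dimGalextnsram}. Once these structural points are in place, the rest is a direct appeal to Lemmas \ref{2dimGalextns1} and \ref{2dimGalextnsram}.
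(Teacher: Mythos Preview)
Your proposal is correct and follows essentially the same approach as the paper: take $H$ to be the inertia group of $L\otimes_F F_{\pi_2}/F_{\pi_2}$, set $L_2=L^H$, apply Lemma~\ref{2dimGalextns1} to $L_2/F$ to obtain $L_1$, and then apply Lemma~\ref{2dimGalextnsram} to $L/L_2$ over the regular local ring $B$. You in fact supply a few justifications that the paper leaves implicit (normality of $H$ in $G$ so that $L_2/F$ is Galois, and the check that $L/L_2$ is unramified on $B$ away from $\pi_1',\pi_2$ and totally ramified at $\pi_2$).
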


\begin{proof} 
Let $G$ be the Galois group of $L/F$.  
Since $L\otimes_F F_{\pi_2}/F_{\pi_2}$ is a field extension (\ref{2dimextns}), 
the Galois group $Gal(L\otimes_F F_{\pi_2}/F_{\pi_2})$ 
is isomorphic to $G$. 
We identify these two groups. 
We consider the inertia group $H$ of the extension 
$L\otimes_F F_{\pi_2}/F_{\pi_2}$ which is a subgroup of $G$. 
Let $L_2  = L^H$. 
Then, as in (\ref{2dimGalextns1}), 
$L_2/F$ is unramified on $A$ except possibly  at $\pi_1$.
Hence, by (\ref{2dimGalextns1}), 
there exists a sub extension $L_1/F$ of $L_2/F$ such that 
$L_1/F$ is unramified on $A$ and 
$L_2 = L_1(\sqrt[d_1]{u\pi_1})$  for some unit $u$ in 
the integral closure of $A$ in $L_1$.  
  Let $B$ be the integral closure of $A$ in $L_2$. 
  Then $B$ is a regular local ring with maximal 
  ideal  $(\sqrt[d_1]{u\pi_1}, \pi_2)$ by (\cite[Lemma 3.2.]{PS}). 
  Since $L/L_2$ is unramified on $B$ except possibly at 
  $\sqrt[d_1]{u\pi_1}$, $\pi_2$ and totally ramified at $\pi_2$, 
  by (\ref{2dimGalextnsram}), 
  $L = L_2(\sqrt[d_2]{v (\sqrt[d_1]{u\pi_1})^i \pi_2})$ for 
  some unit $v \in B$.
\end{proof}

 \vspace{0.1in}
 
\subsection{Norms over two dimensional complete fields}

Let $A$, $F$, $\pi_1, \pi_2$ 
be as in \ref{notation_A_pi}. 
Let $\lambda= u \pi _1^r \pi _2^s$ 
for some unit $u \in A$ and integers $r, s.$ 
In this subsection, we show that if 
$\lambda$ is a norm from the extension 
$L\otimes_F F_{\pi _1}/F_{\pi _1},$ then 
$\lambda$ is a norm from the extension $L/F.$\\

We begin by describing the elements of  
$F_{\pi_i}^{\times} / F_{\pi_i}^{\times n}:$

\begin{lemma}
\label{elements2dim}
Let $A$ be a complete regular local ring 
of dimension $2$ with residue field $\kappa$ 
and fraction field $F.$ 
Let $\mathfrak{m} = (\pi _1 , \pi _2 )$ 
be the maximal ideal of $A.$ 
Let $m \geq 1$ be an integer coprime to 
char$(\kappa ).$ 
Let $F_{\pi_1}$ be the completion of $F$ 
at the discrete valuation of $F$ given by $\pi_1$. 
Then every element in $F_{\pi_1}$ can be written as 
$u\pi_1^s\pi_2^t a^m$ for some  $u \in A$ a unit, 
$a \in F_{\pi_1}$ and integers $s, t$.
\end{lemma}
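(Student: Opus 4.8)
The plan is to analyze the structure of $F_{\pi_1}$, the completion of $F$ at the divisorial valuation $v_1$ cut out by $\pi_1$. Its valuation ring $\widehat{A}_{(\pi_1)}$ is a complete discrete valuation ring with uniformizer $\pi_1$ and residue field $\kappa(\pi_1)$, the fraction field of the one-dimensional local ring $A/(\pi_1)$, which is itself a complete DVR with uniformizer the image $\bar\pi_2$ of $\pi_2$ and residue field $\kappa$. So $\kappa(\pi_1)$ carries a discrete valuation $\bar v_2$ with $\bar v_2(\bar\pi_2)=1$ and residue field $\kappa$. The first step is to reduce the claim to a statement about $m$-th powers modulo the subgroup generated by units of $A$, $\pi_1$, and $\pi_2$: given $z\in F_{\pi_1}^*$, after dividing by a power of $\pi_1$ we may assume $z$ is a unit in $\widehat{A}_{(\pi_1)}$, i.e. $v_1(z)=0$; then I must show $z \equiv u\pi_2^t \pmod{F_{\pi_1}^{*m}}$ for some unit $u\in A$ and integer $t$.

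Next I would pass to the residue. Since $v_1(z)=0$, $z$ has a well-defined image $\bar z \in \kappa(\pi_1)^*$. Using the discrete valuation $\bar v_2$ on $\kappa(\pi_1)$ with uniformizer $\bar\pi_2$, write $\bar z = \bar\pi_2^{\,t}\,\bar w$ where $t=\bar v_2(\bar z)$ and $\bar w$ is a $\bar v_2$-unit, hence lies in the valuation ring $(A/(\pi_1))_{(\bar\pi_2)}$ — but that localization, being a complete DVR with residue field $\kappa$, has the property that $\bar w$ agrees with some element $\bar u$ of $A/(\pi_1)$ (a unit there) up to a factor congruent to $1$; more precisely $\bar w/\bar u \equiv 1$ in the residue field $\kappa$ after a suitable choice, or one simply notes $\bar w$ is a unit in the DVR $(A/(\pi_1))_{(\bar\pi_2)}$ and hence, since $A/(\pi_1)\to (A/(\pi_1))_{(\bar\pi_2)}$ realizes the latter as (the completion can be arranged, but even without that) a localization, one can write $\bar w = \bar u \cdot c$ with $\bar u\in A/(\pi_1)$ a unit and $c\equiv 1$ modulo the maximal ideal $(\bar\pi_2)$. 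Lifting $\bar u$ to a unit $u\in A$, I get $\bar z = \bar u\,\bar\pi_2^{\,t}\,c$ with $c\equiv 1 \pmod{(\bar\pi_2)}$ in $\kappa(\pi_1)$.

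Then I would use Hensel's lemma in the complete DVR $\kappa(\pi_1)$: since $m$ is coprime to $\mathrm{char}(\kappa)=\mathrm{char}(\kappa(\pi_1))$ and $c$ reduces to $1$ in the residue field $\kappa$ of $\kappa(\pi_1)$, $c$ is an $m$-th power in $\kappa(\pi_1)$, say $c=\bar b_0^{\,m}$. Hence $\bar z = \bar u\,\bar\pi_2^{\,t}\,\bar b_0^{\,m}$ in $\kappa(\pi_1)^*$. Now lift: choose $b_0\in \widehat{A}_{(\pi_1)}^*$ reducing to $\bar b_0$; then $z\,(u\pi_2^t b_0^m)^{-1}$ is a $v_1$-unit in $\widehat{A}_{(\pi_1)}$ reducing to $1$ in $\kappa(\pi_1)$. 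A second application of Hensel's lemma, this time in the complete DVR $\widehat{A}_{(\pi_1)}$ (again using $(m,\mathrm{char}(\kappa))=1$, noting the residue characteristic is that of $\kappa(\pi_1)$, which equals $\mathrm{char}(\kappa)$), shows this unit is an $m$-th power, say $b_1^m$. Therefore $z = u\,\pi_1^s\,\pi_2^t\,(b_0 b_1)^m$ with $s = v_1(z)$ reinstated, which is the desired form with $b=b_0b_1\in F_{\pi_1}$.

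The only delicate point is the step producing the unit $u\in A$ from the residue $\bar w$: one must be careful that $A/(\pi_1)$ need not equal its own localization at $(\bar\pi_2)$, so the "unit of $A$" is extracted at the level of the residue ring $A/(\pi_1)$ and its localization, not literally inside $\widehat{A}_{(\pi_1)}$. Concretely, $\bar w$ is a unit in the DVR $(A/(\pi_1))_{(\bar\pi_2)}$; since $A/(\pi_1)\hookrightarrow (A/(\pi_1))_{(\bar\pi_2)}$ with the target a DVR of residue field $\kappa$, I can write $\bar w \equiv \bar u \pmod{(\bar\pi_2)}$ for a unit $\bar u$ of $A/(\pi_1)$ lifting the common image in $\kappa$, and absorb the discrepancy $\bar w/\bar u\equiv 1$ into the Hensel $m$-th-power argument above. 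I expect this bookkeeping — keeping straight which complete DVR each Hensel application takes place in and which residue field controls the characteristic hypothesis — to be the main obstacle; the rest is a routine two-step descent of $m$-th powers.
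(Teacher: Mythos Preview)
Your proposal is correct and follows essentially the same route as the paper's proof: strip off a power of $\pi_1$, pass to the residue field $\kappa(\pi_1)$, use the $\bar\pi_2$-adic valuation there to extract a power of $\pi_2$ and a unit, lift the unit to $A$, and finish by Hensel in $\widehat{A}_{(\pi_1)}$.

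However, you introduce a spurious complication. You correctly observe at the outset that $A/(\pi_1)$ is itself a complete DVR with uniformizer $\bar\pi_2$ and residue field $\kappa$; but then you forget this and worry that ``$A/(\pi_1)$ need not equal its own localization at $(\bar\pi_2)$.'' Since $A$ is local, $A/(\pi_1)$ is local with maximal ideal $(\bar\pi_2)$, so it \emph{is} that localization. Consequently your $\bar w$ is already a unit of $A/(\pi_1)$: you may take $\bar u=\bar w$ and $c=1$, eliminating the first Hensel step in $\kappa(\pi_1)$ entirely. The paper's proof does exactly this --- it writes $\bar v = z\,\bar\pi_2^{\,t}$ with $z\in (A/(\pi_1))^*$, lifts $z$ to a unit $\omega\in A$, and applies Hensel once in $\widehat{A}_{(\pi_1)}$. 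Your two-step descent is not wrong, just redundant.
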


\begin{proof}
Let $\widehat{A}_{(\pi_1)}$ be the completion 
of the local ring $A_{(\pi_1)}.$ 
Then $ F_{\pi _1}$ is the fraction field of 
$\widehat{A}_{(\pi_1)}.$ 
Let $x \in  F_{\pi _1}.$ 
Then $x = v {\pi _1}^{s}$ for some unit 
$v \in \widehat{A}_{(\pi_1)}$ and integer $s.$ 
Let $\overline{v}$ be the image of $v$ in 
the residue field $\kappa(\pi_1)$ of $ F_{\pi_1}.$ 
Since $\kappa( \pi_1)$ is the fraction field of  $A/(\pi_1)$ 
and $A/(\pi_1)$ is a discrete valuation ring with the image 
$\overline{\pi}_2$ as a parameter, we can write 
$\overline{v} = z \overline{\pi _2}^{t}$ for 
some unit $z \in A/(\pi_1)$ and integer $t.$ 
Let $u \in A$ with 
$\overline{u} = z \in A/(\pi_1).$ 
Since $z$ is a unit in $A/(\pi_1),$ 
$u$ is a unit in $A.$ 
Hence $x^{-1} u \pi_1^{s} \pi_2^{t}$ 
maps to 1 in $\kappa( \pi_1).$ 
Since $m$ is coprime to char$(\kappa),$ 
by Hensel's lemma, 
$\displaystyle x = u {\pi _1}^{s} {\pi _2}^{t} {a}^{m}$ 
for some $a \in  F_{\pi _1 }.$ 
\end{proof}

%\begin{lemma}\label{2dim-norms-unram} Let $A$ be a complete regular local ring of dimension $2$ with residue field $\kappa$ , maximal ideal $\mathfrak{m} = (\pi _1, \pi_2)$and fraction field $F.$ Let $L/F$ be a Galois field extension of degree $n$ where $n$ is coprime to char$(\kappa ).$ Assume $L/F$ is unramified on $A.$ Let $\lambda = u \pi _1^r \pi _2^s$ for some unit $u \in A$ and integers $r, s.$ If $\lambda$ is a norm from the extension $L\otimes_FF_{\pi _1}/F_{\pi _1},$ then $\lambda$ is a norm from the extension $L/F.$ \end{lemma}

\begin{lemma}
\label{2dim-norms-ram1} 
Let $A$ be a complete regular local ring of dimension $2$ with 
residue field $\kappa$ and fraction field $F.$ 
Let $L/F$ be a Galois field extension of degree $n$ 
where $n$ is coprime to char$(\kappa ).$ 
Let $\mathfrak{m} =(\pi _1 ,\pi _2)$ be the maximal ideal of $A.$ 
Suppose that $L/F$ is unramified on $A$ 
except possibly at $\pi _1$.
Let $\lambda = u \pi _1^r \pi _2^s$ for 
some unit $u \in A$ and integers $r, s.$ 
If $\lambda$ is a norm from the extension 
$L\otimes_F F_{\pi _1}/F_{\pi _1},$ then 
$\lambda$ is a norm from the extension $L/F.$ 
\end{lemma}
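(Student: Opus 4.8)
The plan is to modify $\lambda$ successively by elements that are visibly norms from $L/F$ — first removing the power of $\pi_1$, then the power of $\pi_2$ — until only a unit of $A$ remains, and then to translate the hypothesis into a norm condition over the residue field $\kappa$ which that unit satisfies. By Lemma \ref{2dimGalextns1} I would write $L = L_1(\theta)$ with $\theta^e = u'\pi_1$, where $L_1/F$ is unramified on $A$, $u'$ is a unit in the integral closure $B$ of $A$ in $L_1$, and $e = [L:L_1]$, $f = [L_1:F]$, $n = ef$; let $C$ be the integral closure of $A$ in $L$. Since $L/F$ is unramified on $A$ away from $\pi_1$, the algebra $\Lambda := L\otimes_F F_{\pi_1}$ is a field (Lemma \ref{2dimextns}): it is the fraction field of the completion of $C$ at its unique height-one prime above $\pi_1$, and $\Lambda/F_{\pi_1}$ is a tamely ramified Galois extension of complete discretely valued fields of ramification index $e$, residue degree $f$, and residue field $\ell = \mathrm{Frac}(B/\pi_1 B)$ over $\kappa(\pi_1) = \mathrm{Frac}(A/\pi_1)$.

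\textbf{Clearing the $\pi_1$- and $\pi_2$-exponents.} First I would compute, from the minimal polynomial $X^e - u'\pi_1$ of $\theta$ over $L_1$ together with $N_{L_1/F}(\pi_1) = \pi_1^{f}$, that $N_{L/F}(\theta) = N_{L_1/F}\!\big((-1)^{e+1}u'\pi_1\big) = w_0\,\pi_1^{f}$ with $w_0 \in A^*$. Since $\lambda$ is a norm from $\Lambda/F_{\pi_1}$, whose residue degree is $f$, comparing $\pi_1$-adic valuations forces $f \mid r$; because norms are compatible with the base change $F \hookrightarrow F_{\pi_1}$, I may replace $\lambda$ by $\lambda\, N_{L/F}(\theta)^{-r/f}$ and so assume $\lambda = u_1\pi_2^{s}$ with $u_1 \in A^*$. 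Then $\lambda$ is a norm of a unit of the valuation ring $\mathcal O_\Lambda$ of $\Lambda$, so its residue $\overline\lambda \in \kappa(\pi_1)^*$ lies in $N_{\ell/\kappa(\pi_1)}(\ell^*)^{e}$ (here I use that $\ell/\kappa(\pi_1)$ is the residue extension of the tame $\Lambda/F_{\pi_1}$, Galois with group $\mathrm{Gal}(L_1/F)$, the inertia having order $e$). Since $B/\pi_1 B$ is a discrete valuation ring, unramified over $A/\pi_1$ of residue degree $f$ and with uniformizer $\bar\pi_2$, comparing $\bar\pi_2$-adic valuations gives $n \mid s$; as $\pi_2^{n} = N_{L/F}(\pi_2)$, I reduce to $\lambda = u_1 \in A^*$, still a norm of a unit of $\mathcal O_\Lambda$, so $\overline u_1 \in N_{\ell/\kappa(\pi_1)}(\ell^*)^{e}$ in $\kappa(\pi_1)^*$.

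\textbf{Descending to $\kappa$.} Reducing $u_1$ modulo $\bar\pi_2$, and using that norms of units commute with reduction in the unramified extension $B/\pi_1 B$ over $A/\pi_1$, I get $\overline u_1 \in N_{l_0/\kappa}(l_0^*)^{e}$ in $\kappa^*$, where $l_0 = B/\mathfrak m_B$ is the residue field of $L_1$, of degree $f$ over $\kappa$. On the other hand, the residue field of $L$ is again $l_0$ (the ramification of $L/L_1$ lying entirely along $\pi_1$), the inertia subgroup of $\mathrm{Gal}(L/F)$ at the closed point of $C$ is $\mathrm{Gal}(L/L_1)$, of order $e$, and $A$ is complete with $n$ prime to $\mathrm{char}(\kappa)$; so, lifting a residual preimage and using $1 + \mathfrak m_A \subseteq A^{*n}$, every $a \in A^*$ with $\overline a \in N_{l_0/\kappa}(l_0^*)^{e}$ lies in $N_{L/F}(C^*)$. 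Applying this to $u_1$ and unwinding the two reductions gives $\lambda \in N_{L/F}(L^*)$.

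\textbf{Main obstacle.} The substantive point is the passage from the single local condition at $\pi_1$ to the closed point, carried out in the previous two paragraphs. It rests on three facts that I would isolate: that $B/\pi_1 B$ is a discrete valuation ring (so the curve $\pi_2 = 0$ stays irreducible after the unramified base change to $L_1$, which is what forces $n \mid s$); that the residue extension of $L/L_1$ at the closed point is trivial; and the standard local facts that the formation of norms of units commutes with reduction for unramified extensions of complete discrete valuation rings, and that $1 + \mathfrak m$ consists of $n$-th powers when $n$ is prime to the residue characteristic.
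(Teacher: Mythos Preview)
Your argument is correct, and it takes a genuinely different route from the paper's. The paper proceeds by approximating the preimage $\mu\in L\otimes_F F_{\pi_1}$ rather than massaging $\lambda$: using Lemma~\ref{elements2dim} applied to the integral closure of $A$ in $L$ (a complete regular local ring with parameters $\sqrt[e]{v\pi_1},\pi_2$), one writes $\mu=\theta\,b^{n}$ with $\theta\in L^{*}$ of the form $w(\sqrt[e]{v\pi_1})^{i}\pi_2^{j}$, $w$ a unit. Then $N_{L/F}(\theta)^{-1}\lambda$ lies in $F_{\pi_1}^{*n}$ and is visibly of the shape (unit of $A)\cdot\pi_1^{a}\pi_2^{b}$, and the paper invokes \cite[Corollary~5.5]{PPS} to conclude it lies in $F^{*n}$, hence is a norm. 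Your approach instead exploits the ramification filtration directly: the divisibility $f\mid r$ comes from the residue degree of $\Lambda/F_{\pi_1}$, the divisibility $n\mid s$ from the fact that $B/\pi_1 B$ is a discrete valuation ring unramified of degree $f$ over $A/\pi_1$ together with the exponent $e$ picked up by the norm under reduction, and the final unit is handled by descent to $\kappa$ via Hensel. This is longer but entirely self-contained---it avoids the external citation to \cite{PPS} and makes transparent exactly how each of $e$ and $f$ contributes---whereas the paper's version is shorter and packages the ``$n$-th power locally at $\pi_1$ plus special shape implies $n$-th power globally'' step into a single black-box reference.
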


\begin{proof} 
Let $\mu \in L\otimes_F F_{\pi_1}$ be 
such that $N_{L\otimes_F F_{\pi_1}/F_{\pi_1}}(\mu) = \lambda$. 
 
 Since $L/F$ is a Galois extension which is unramified on $A$ 
except possibly at $\pi_1$, by (\ref{2dimGalextns1}), 
we have a subfield $F \subseteq L_1 \subseteq L$
such that $L_1/F$ is unramified on $A$ and 
$L = L_1(\sqrt[e]{v\pi_1})$ where $e=[L : L_1]$ 
and $v$ is a unit in the integral closure of $A$ in $L_1$. 
Let $B$ be the integral closure of $A$ in $L$. 
Then $B$ is a regular local ring with maximal ideal 
$(\sqrt[e]{v\pi_1}, \pi_2)$ by  (\cite[Lemma 3.2.]{PS}).
Hence, by (\ref{elements2dim}), 
$\mu = w \sqrt[e]{v\pi_1}^i \pi_2^j b ^n$ 
for some integers $i, j$, $b \in L\otimes_F F_{\pi_1}$
 and $w$ a unit in $B$. 
Let $\theta= w \sqrt[e]{v\pi_1}^i \pi_2^j  \in L$. 
Since $N_{L\otimes_F F_{\pi_1}/F_{\pi_1}}(\mu) = \lambda$,
we have $N_{L/F}(\theta^{-1}) \lambda  = 
N_{L\otimes_F F_{\pi_1}/F_{\pi_1}}(b^n)  \in F_{\pi_1}^{\times n}$. 
Since $N_{L/F}(\theta^{-1}) \lambda = 
[N_{L/F}(w) {v\pi_1}^{\frac{n}{e}i} \pi_2^{nj}]^{-1}
u \pi _1^r \pi _2^s
=[u (N_{L/F}(w))^{-1}] \pi_1^{r - \frac{n}{e}i} 
\pi_2^{s-nj}$ and   $u (N_{L/F}(w))^{-1}$ is a unit in $A$, 
  by (\cite[Corollary 5.5.]{PPS}), 
$N_{L/F}(\theta^{-1}) \lambda \in F^{\times n}$.
In particular $N_{L/F}(\theta^{-1})\lambda$ 
is a norm from the extension $L/F$
and hence $\lambda$ is a norm from $L/F$.
\end{proof}

\begin{theorem}
\label{2dim-norms} 
Let $A$ be a complete regular local ring of dimension $2$ with 
residue field $\kappa$ and fraction field $F.$ 
Let $L/F$ be a Galois field extension of degree $n$ where 
$n$ is coprime to char$(\kappa ).$ 
Let $\mathfrak{m} =(\pi _1 ,\pi _2)$ be the maximal ideal of $A.$ 
Suppose that $L/F$ is unramified on $A$ 
except possibly at $\pi _1 , \pi _2.$ 
Let $\lambda = u \pi _1^r \pi _2^s$ for 
some unit $u \in A$ and integers $r, s.$ 
If $\lambda$ is a norm from the extension 
$L\otimes_F F_{\pi _1}/F_{\pi _1},$ then 
$\lambda$ is a norm from the extension $L/F.$ 
\end{theorem}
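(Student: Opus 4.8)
The plan is to reduce the two-variable ramification case to the one-variable case handled in Lemma \ref{2dim-norms-ram1}, by ``moving'' the ramification at $\pi_2$ onto a single branch and exploiting the explicit structure of $L/F$ given by Theorem \ref{2dimGalextns}. First I would apply Theorem \ref{2dimGalextns} to obtain the tower $F \subseteq L_1 \subseteq L_2 \subseteq L$ with $L_1/F$ unramified on $A$, $L_2 = L_1(\sqrt[d_1]{u\pi_1})$, and $L = L_2(\sqrt[d_2]{v(\sqrt[d_1]{u\pi_1})^i\pi_2})$; since $L/F$ is Galois of degree coprime to $\operatorname{char}(\kappa)$, the argument inside Theorem \ref{2dimGalextns} already shows $F$ contains enough roots of unity, so $L/F$ is abelian with the group a product of cyclic pieces corresponding to $L_1/F$, $L_2/L_1$, $L/L_2$. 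The key point is that $L/L_2$ is totally ramified at $\pi_2$ only, so relative to the regular local ring $B = $ integral closure of $A$ in $L_2$ (which has maximal ideal $(\sqrt[d_1]{u\pi_1}, \pi_2)$ by \cite[Lemma 3.2.]{PS}), the extension $L/L_2$ is of exactly the type treated in Lemma \ref{2dim-norms-ram1}, with the roles of $\pi_1$ and $\pi_2$ swapped.

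Next I would analyze the norm hypothesis. Suppose $\mu \in L \otimes_F F_{\pi_2}$ has $N_{L\otimes_F F_{\pi_2}/F_{\pi_2}}(\mu) = \lambda = u\pi_1^r\pi_2^s$. By \ref{2dimextns}, $L\otimes_F F_{\pi_2}$ is a field, and its valuation ring is the localization-completion of $B' = $ integral closure of $A$ in $L$, whose maximal ideal is generated by $\pi_1$-related and $\pi_2$-related parameters coming from the tower. Using \ref{elements2dim} (applied with the roles of $\pi_1, \pi_2$ interchanged, over the regular local ring $B'$), I would write $\mu = \theta \cdot b^n$ where $\theta \in L$ is a monomial in the ramification parameters and $b \in L\otimes_F F_{\pi_2}$; then $N_{L/F}(\theta^{-1})\lambda = N_{L\otimes_F F_{\pi_2}/F_{\pi_2}}(b^n) \in F_{\pi_2}^{*n}$, and this element has the form (unit of $A$)$\cdot \pi_1^{r'}\pi_2^{s'}$. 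By \cite[Corollary 5.5.]{PPS} an element of that shape lying in $F_{\pi_2}^{*n}$ actually lies in $F^{*n}$, hence is a norm from $L/F$; so it remains to show $\theta$ itself is a norm from $L/F$. But $\theta$ is a product of the parameter $\sqrt[d_2]{v(\sqrt[d_1]{u\pi_1})^i\pi_2}$ (which, being a uniformizer of $L$ over $L_2$ with $L/L_2$ totally ramified cyclic, has norm down to $L_2$ an explicit unit times $\pi_2$, itself a norm) and parameters that are units in $B'$ at $\pi_2$; tracking norms down the tower $L \to L_2 \to L_1 \to F$ and using that each layer is a tamely ramified or unramified extension of two-dimensional regular local rings, one sees $N_{L/F}(\theta)$ is of monomial type $u''\pi_1^{r''}\pi_2^{s''}$ and, crucially, $N_{L/F}(\theta^{-1})\lambda$ has the same $\pi_2$-exponent structure needed to invoke Lemma \ref{2dim-norms-ram1} at $\pi_1$.

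The cleanest organization may instead be: transfer the problem to $F_{\pi_2}$, use \ref{2dim-norms-ram1} with $\pi_1 \leftrightarrow \pi_2$ to conclude $\lambda$ is a norm from $L/F$ \emph{provided} $\lambda$ is a norm from $L\otimes_F F_{\pi_2}/F_{\pi_2}$ — but that is a statement about $L/F$ unramified except at $\pi_2$, whereas here $L/F$ is also ramified at $\pi_1$, so Lemma \ref{2dim-norms-ram1} does not directly apply. Hence the real work is to first kill the ramification at $\pi_1$: replace $F$ by $L_1$ (unramified on $A$, so $A$'s integral closure $B$ in $L_1$ is again complete regular local of dimension $2$, and $\lambda$ is a norm from $L_1/F$ trivially being unramified — actually one checks $\lambda \in N_{L_1/F}$ using that unramified cyclic extensions of complete regular local rings have surjective norm on units via \cite[Corollary 5.5.]{PPS} again), then over $L_1$ the extension $L/L_1$ is ramified only at the two parameters $\sqrt[d_1]{u\pi_1}$ and $\pi_2$, but now \emph{totally} ramified at $\pi_2$; peel off $L_2/L_1$ which is ramified only at $\sqrt[d_1]{u\pi_1}$ via Lemma \ref{2dim-norms-ram1}, and finally $L/L_2$ is ramified only at $\pi_2$ so Lemma \ref{2dim-norms-ram1} (with parameters swapped) finishes it. I expect the main obstacle to be bookkeeping: ensuring at each stage of the tower that the element remains of the monomial form $u\pi_1^r\pi_2^s$ required to re-apply \ref{2dim-norms-ram1}, and that the norm hypothesis descends correctly from $L\otimes_F F_{\pi_1}$ (resp. $F_{\pi_2}$) through the intermediate fields — in particular verifying that a norm from $L\otimes_F F_{\pi_1}/F_{\pi_1}$ restricts to a norm from $L_2\otimes_F F_{\pi_1}/F_{\pi_1}$ and from $L_1\otimes_F F_{\pi_1}/F_{\pi_1}$, which requires the transitivity of norms and the fact that the relevant intermediate completions are themselves fields by \ref{2dimextns}.
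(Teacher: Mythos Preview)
Your proposal circles around the right tower $F \subseteq L_1 \subseteq L_2 \subseteq L$ from Theorem~\ref{2dimGalextns}, but it misses the two structural ingredients on which the paper's proof actually rests.

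First, the direct approach in your second paragraph breaks down: to write $\mu = \theta \cdot b^n$ with $\theta \in L$ a monomial via Lemma~\ref{elements2dim}, you need the integral closure $B'$ of $A$ in $L$ to be a complete regular local ring of dimension~$2$. But $L = L_2\bigl(\sqrt[d_2]{v(\sqrt[d_1]{u\pi_1})^i\pi_2}\bigr)$, and when $i \not\equiv 0 \pmod{d_2}$ the element $v(\sqrt[d_1]{u\pi_1})^i\pi_2$ lies in $\mathfrak{m}_B^2$, so $B'$ is in general only normal, not regular (think of $k[[x,y,z]]/(z^2-xy)$). This is precisely why the paper does \emph{not} work inside $L$: it applies Lemma~\ref{elements2dim} to the partial norm $N_{L\otimes_F F_{\pi_1}/L_2\otimes_F F_{\pi_1}}(\mu)$, which lives over $B$, the integral closure of $A$ in $L_2$; since $L_2 = L_1(\sqrt[d_1]{u\pi_1})$ with $u\pi_1$ a regular parameter, $B$ \emph{is} regular by \cite[Lemma~3.2]{PS}. (Also: your hypothesis is on $F_{\pi_1}$, not $F_{\pi_2}$; and once you know $N_{L/F}(\theta^{-1})\lambda \in F^{*n}$ there is nothing ``remaining'', since $N_{L/F}(\theta)$ is tautologically a norm.)

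Second, your third paragraph asserts that $L/L_2$ is ramified only at $\pi_2$, which is false for the same reason ($d_2$ need not divide $i$), so Lemma~\ref{2dim-norms-ram1} does not apply to that top layer. The paper instead runs an \emph{induction on $[L:F]$}: the monomial $\theta \in L_2$ obtained above is a norm from $L\otimes_F F_{\pi_1}$ to $L_2\otimes_F F_{\pi_1} \cong (L_2)_{\sqrt[d_1]{u\pi_1}}$, so the inductive hypothesis (applied to the Galois extension $L/L_2$ over the regular ring $B$, with $[L:L_2]<[L:F]$) gives $\theta = N_{L/L_2}(\theta')$; then $N_{L/F}(\theta')^{-1}\lambda$ is a monomial in $\pi_1,\pi_2$ lying in $F_{\pi_1}^{*n}$, hence in $F^{*n}$ by \cite[Corollary~5.5]{PPS}. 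The base case $L_2 = F$ means $L = F(\sqrt[n]{v\pi_2})$ is \emph{cyclic}, and the paper closes it with a cyclic-algebra argument: $(L,\sigma,\lambda)$ is unramified on $A$ except at $\pi_1,\pi_2$ and splits over $F_{\pi_1}$, so it splits over $F$ by \cite[Corollary~5.5]{PPS}, whence $\lambda$ is a norm. Your plan has no analogue of either the induction or this base case. (Finally, the claim that $L/F$ is abelian is unjustified: $L_1/F$ and $L_2/F$ need not even be Galois, since the relevant inertia subgroups need not be normal in $\mathrm{Gal}(L/F)$.)
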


\begin{proof} 

Let $\mu \in L\otimes_F F_{\pi_1}$ be such that 
$N_{L\otimes_F F_{\pi_1}/F_{\pi_1}}(\mu) = \lambda$. 
We show by induction on the degree of the field 
extension $L/F$ that $\lambda$ is a norm from the extension $L/F$. 

Since $L/F$ is a Galois extension which is unramified 
on $A$ except possibly at $\pi_1$and $\pi_2$, 
we have subfields $L_1$ and $L_2$ as in  (\ref{2dimGalextns}).

Let $B$ be the integral closure of $A$ in $L_2$. 
Then $B$ is a complete regular local ring with maximal ideal 
$(\sqrt[d_1]{v\pi_1}, \pi_2)$ by (\cite[Lemma 3.2.]{PS}). 
By (\ref{elements2dim}), we have 
$N_{L\otimes_F F_{\pi_1} / L_2 \otimes_F F_{\pi_1}}(\mu) = 
w \sqrt[d_1]{v\pi_1}^i \pi_2^j b ^n$ 
for some integers $i, j$, $b \in L_2\otimes_F F_{\pi_1}$ 
and $w$ a unit in $B$. 
Then $\theta = w \sqrt[d_1]{v\pi_1}^i \pi_2^j $ 
is a norm from $L\otimes_F F_{\pi_1}/
 L_2 \otimes_F F_{\pi_1}$.

Suppose that $F \neq L_2$. 
Then $[ L  : L_2] < [L : F]$ and  by induction, 
$\theta $ is a norm from $L/L_2$.
Write $\theta = N_{L/L_2}(\theta')$.
Then
$$
\begin{array}{rcl}
\lambda  & =  & N_{L\otimes_F F_{\pi_1}/F_{\pi_1}}(\mu)  \\
& = & N_{L_2\otimes_F F_{\pi_1}/F_{\pi_1}}(  N_{L\otimes_F F_{\pi_1}/ L_2 \otimes_ FF_{\pi_1}}(\mu)) \\
& = & N_{L_2\otimes_F F_{\pi_1}/F_{\pi_1}}(  \theta b^n)  \\
& = & N_{L_2\otimes_F F_{\pi_1}/F_{\pi_1}}(  \theta)  N_{L_2\otimes_F F_{\pi_1}/F_{\pi_1}}( b^n)  \\
& = & N_{L_2 /F }( \theta)   N_{L_2\otimes_F F_{\pi_1}/F_{\pi_1}}( b)^n  \\
& = & N_{L_2 /F }(N_{L/L_2}(\theta')   N_{L_2\otimes_F F_{\pi_1}/F_{\pi_1}}( b)^n  \\
& = & N_{L /F }( \theta')   N_{L_2\otimes_F F_{\pi_1}/F_{\pi_1}}( b)^n  \\
\end{array}
$$ 
Since $N_{L/F}(\theta') = N_{L_2/F}(\theta) = N_{L_2/F}(w \sqrt[d_1]{v\pi_1}^i \pi_1^j)$,
$N_{L/F}(\theta')^{-1} \lambda$ is a product of a unit in $A$ with a power of $\pi_1$ and a power of $\pi_2$.
Since $N_{L/F}(\theta')^{-1} \lambda =  N_{L_2\otimes_F F_{\pi_1}/F_{\pi_1}}( b)^n \in F_{\pi_1}^{n}$, 
  by (\cite[Corollary 5.5.]{PPS}),
 we conclude that $N_{L/F}(\theta')^{-1} \lambda  $
  is a $n^{\rm th}$ power in $F$ and hence a norm from $L$ to $F$.
  Hence $\lambda $ is also a norm from $L$ to $F$.

Now suppose $F = L_2$. 
Then $L=F(\sqrt[n]{v\pi_2})$ where 
$v$ is a unit in $A$ and hence $L/F$ is a cyclic extension of degree $n$. 
Let  $\sigma$ be a generator of the Galois group of $L/F$ and 
$C$ be the cyclic algebra $(L,  \sigma, \lambda)$.
Since $L/F$ is unramified on $A$ except at $\pi_2$, $C$ is a unramified on $A$ 
except possibly at  $\pi_1$ and $\pi_2$.
Since $\lambda$ is a norm from $L\otimes_F F_{\pi_1}$, 
$C\otimes_F F_{\pi_1}$ is a split algebra. 
 Thus, by (\cite[Corollary 5.5.]{PPS}),  
 $C$ is a split algebra and hence 
 $\lambda$ is a norm from the extension $L/F$ by 
 (\cite[Theorem 6, p-95]{A}). 
 \end{proof}

\vspace{0.1in}

\section{$\Sh $ vs $\Sh _{X}$}

In this section, we compare the groups 
$\Sh_X(F,T_{L/F})$ and $\Sh(F, T_{L/F})$ 
for a semi-global field $F$ 
and a finite Galois extension $L/F$ of degree coprime to the 
characteristic of the residue field. The proof uses 
Theorem \ref{2dim-norms} from the previous section. 
This allows us to use patching techniques to prove the 
local-global principle for norm one tori $T_{L/F}$ over $F$ 
with respect to discrete valuations.

%\textcolor{red}{In this section, we prove that for norm one tori over semi-global fields corresponding to finite Galois extensions, studying the local-global principle with respect to discrete valuations can be reduced to studying local-global principles with respect to points on the reduced special fibres of regular proper models of the given semi-global field. The assumption here is that the degree of the Galoisextension considered is coprime to the characteristic of the residue field associated to the semi-global field. The proof relies on Theorem \ref{2dim-norms} from the previous section.}

\begin{theorem} 
\label{union_of_Sha_X_equals_Sha_dvr}
Let $K$ be a complete discretely valued field with 
residue field $\kappa.$ 
Let $F$ be the function field of a smooth, projective, 
geometrically integral curve over $K$ and 
$\XX_0$ a regular proper model of $F$ with reduced special fibre $X_0$.
Let $L/F$ be a Galois field extension of degree coprime to char $(\kappa ).$ 
 Then $\displaystyle \Sh (F,T_{L/F}) = \bigcup_{X} \Sh _X (F,T_{L/F}),$ 
where $X$ is  running over the reduced special fibres 
of regular proper models $\XX$ of $F$ which are 
obtained as a sequence of blow-ups of 
$\XX_0$ centered at closed points of $\XX_0$.
\end{theorem}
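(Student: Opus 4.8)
The plan is to prove the two inclusions $\bigcup_X \Sh_X(F,T_{L/F}) \subseteq \Sh(F,T_{L/F})$ and $\Sh(F,T_{L/F}) \subseteq \bigcup_X \Sh_X(F,T_{L/F})$ separately, with essentially all the work going into the second. The first inclusion is immediate: for any regular proper model $\XX$ with special fibre $X$, one has $\Sh_X(F,T_{L/F}) \subseteq \Sh(F,T_{L/F})$ by \cite[Proposition 8.2.]{HHK1}, and taking the union over all the blow-ups in question preserves this containment. So the heart of the matter is: given a class $\xi \in \Sh(F,T_{L/F})$, produce a model $\XX$ (obtained from $\XX_0$ by a sequence of closed-point blow-ups) whose special fibre $X$ satisfies $\xi \in \Sh_X(F,T_{L/F})$, i.e.\ the image of $\xi$ in $H^1(F_P, T_{L/F})$ is trivial for every closed point $P \in X$.

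For this I would first translate the cohomological statement into a concrete norm statement. Since $\xi \in \Sh(F,T_{L/F})$, there is $\lambda \in F^*$ representing $\xi$ (via $F^*/N_{L/F}(L^*) \hookrightarrow H^1(F,T_{L/F})$ when $L/F$ is a field, and via \ref{product_of_field} in general to reduce to the field case — though here $L/F$ is Galois hence a field if it is a domain; the étale-algebra case is handled by passing to the norm one torus of the field factor) such that $\lambda$ is a norm from $L\otimes_F F_\nu/F_\nu$ for every $\nu \in \Omega_F$. In particular $\lambda$ is a local norm at every divisorial discrete valuation of $F$. The key local input is \ref{2dim-norms}: at a closed point $P$ lying on exactly one or two components of $X$, the completed local ring $\widehat{R}_P$ is a complete regular local ring of dimension $2$ with maximal ideal $(\pi_1,\pi_2)$, and if $L/F$ is unramified on $\widehat{R}_P$ except possibly at $\pi_1,\pi_2$ and $\lambda$ has the form $u\pi_1^r\pi_2^s$ with $u$ a unit, then $\lambda$ being a norm from $L\otimes_F F_{\pi_1}/F_{\pi_1}$ (which holds because $F_{\pi_1}$ is the completion of $F$ at a divisorial valuation, where $\lambda$ is a local norm by hypothesis) forces $\lambda$ to be a norm from $L\otimes_F F_P/F_P$. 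Hence $\xi$ dies in $H^1(F_P,T_{L/F})$ at every such $P$.

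The remaining step — and the main obstacle — is to arrange the model so that the two hypotheses of \ref{2dim-norms} hold at \emph{every} closed point of $X$ simultaneously: (i) $L/F$ is unramified on $\widehat{R}_P$ away from at most two regular curves through $P$, and these curves are components of the special fibre or regular divisors meeting it with normal crossings; and (ii) $\lambda$ is supported (as a divisor on $\XX$) only on the special fibre and on such curves, so that locally at each $P$ it is a unit times a monomial in the two local parameters. Both are achieved by embedded resolution: blow up $\XX_0$ repeatedly at closed points so that the ramification locus of $L/F$ together with $\mathrm{div}_{\XX}(\lambda)$ becomes a strict normal crossings divisor whose horizontal part one can absorb — more precisely, one uses that after enough blow-ups the union of the special fibre, the ramification divisor of $L/F$, and the support of $\lambda$ is a normal crossings divisor, and then at each closed point $P$ at most two of these branches pass through $P$, so \ref{2dim-norms} (or \ref{2dim-norms-ram1} when only one branch is ramified, or the unramified case directly) applies. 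One must also check that such blow-ups only ever involve closed points of $\XX_0$ (or of intermediate models, which are themselves closed-point blow-ups of $\XX_0$), which is exactly the constraint in the statement; this is standard for embedded resolution of curves on an arithmetic surface. I expect the bookkeeping that after resolution the form of $\lambda$ at each $P$ is literally $u\pi_1^r\pi_2^s$ — using \ref{2dim-norms} with $\pi_1$ chosen to be the branch at which $L/F$ may ramify — to be the delicate point, together with verifying that the hypothesis "$\lambda$ is a norm locally at the divisorial valuation $\pi_1$" is available, which follows since $\xi \in \Sh(F,T_{L/F})$ and divisorial valuations are among the $\nu \in \Omega_F$.
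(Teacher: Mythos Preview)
Your proposal is correct and follows essentially the same approach as the paper: one inclusion is \cite[Proposition 8.2.]{HHK1}, and for the other you lift $\xi$ to $\lambda \in F^*$, blow up $\XX_0$ at closed points (the paper cites Lipman \cite{L}) until $\mathrm{supp}(\lambda) \cup \mathrm{ram}(L/F) \cup X$ has normal crossings, and then apply Theorem~\ref{2dim-norms} at each closed point using that $\lambda$ is a local norm at the divisorial valuation coming from a component through $P$. Two small points of precision: $\Sh_X$ requires triviality at \emph{all} points of $X$, including generic ones (these are handled directly since they give discrete valuations in $\Omega_F$); and when applying Theorem~\ref{2dim-norms} at $P$, the field called $F_{\pi_1}$ there is the branch field $F_{P,\eta}$, so one needs the (easy) inclusion $F_\eta \subset F_{P,\eta}$ to pass from the divisorial hypothesis to the hypothesis of the theorem---the paper makes this step explicit.
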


\begin{proof} 
Let $x \in \Sh (F,T_{L/F}) \subseteq H^1(F, T_{L/F}).$ 
Since $H^1(F, T_{L/F}) \simeq F^{\times}/N_{L/F}(L^{\times}),$ 
let $\lambda \in F^{\times} $ be a lift of $x.$ 
For a regular proper model $\XX$ of $F,$ 
let $supp_{\XX}(\lambda)$ denote the support of 
$\lambda$ in $\XX$ and $ram_{\XX}(L/F)$ denote 
the ramification locus for the extension $L/F$ with respect 
to $\XX$. 
By (\cite[p-193]{L}), there exists a  sequence of blow-ups $\XX \to \XX_0$ 
centered at closed points of $\XX_0$
 such that the union of $supp_{\XX}(\lambda ),$ $ram_{\XX}(L/F)$ 
and the reduced special fibre $X$ of $\XX$ 
is a union of regular curves with normal crossings. 
We show that $x \in \Sh _{X}(F,T_{L/F}).$ 

 Let $P\in X.$ 
 First suppose $P$ is a generic point of $X.$ 
 Then $P$ gives a discrete valuation 
 $\nu$ of $F$ with $F_\nu = F_P.$ 
 Since $x \in \Sh (F,T_{L/F}),$ $x$ maps to 
 $0$ in $H^1(F_P, T_{L/F}).$

 Next suppose that $P$ is a closed point. 
 Let $\eta_1$ be the generic point of 
 an irreducible component of $X$ containing $P.$ 
 Let $\mathcal{O}_{\XX,P}$ be the local ring at $P$ 
 and $\mathfrak{m}_{\XX,P}$ be its maximal ideal. 
 Then, by our choice of $\XX,$ 
 $\mathfrak{m}_{\XX,P}= (\pi _1, \pi _2)$ where 
 $\pi _1$ is a prime defining $\eta_1$ at $P,$ 
 $\lambda = u \pi _1^r \pi _2^s$ for some unit 
 $u\in \mathcal{O}_{\XX,P}$ and integers $r,s,$ and $L\otimes_F F_{P}/F_{P}$ is 
 unramified on $\mathcal{O}_{\XX,P}$ except possibly at $\pi _1, \pi _2.$ 
 Since $L/F$ is a Galois extension, $L\otimes_F F_{P} = \prod L_P$ 
 for some Galois extension $L_P/F_P.$ 
 Since $L\otimes_F F_{P}/F_P$ is unramified on 
 $\mathcal{O}_{\XX,P}$ except possibly at $\pi _1, \pi _2,$ 
 $L_P/F_P$ is unramified on $\mathcal{O}_{\XX,P}$ 
 except possibly at $\pi _1, \pi _2.$ 
 Since $\lambda $ is a lift of $x \in \Sh _{X}(F,T_{L/F}),$ 
 $\lambda $ is a norm from $L\otimes_F F_{\eta_1}/F_{\eta_1}.$ 
 Since $F_{\eta_1} \subset F_{P, \eta_1},$ 
 $\lambda$ is a norm from $L\otimes_F F_{P, \eta_1}/F_{P, \eta_1 }.$ 
 Hence $\lambda $ is a norm from 
 $L_P \otimes_{F_P} F_{P, \eta_1} /F_{P, \eta_1}.$ 
 Thus, by (\ref{2dim-norms}), 
 $\lambda$ is a norm from $L_P/F_P$ and 
 $x$ maps to $0$ in $H^1(F_P, T_{L/F}).$ 
 Therefore $x \in \Sh _X (F,T_{L/F}).$ 
 By (\cite[Proposition 8.2.]{HHK1}), 
 we have $\displaystyle \bigcup _{X} \Sh _X (F,T_{L/F})= \Sh (F,T_{L/F}),$ 
 where $X$ is  running over the reduced special fibres 
of regular proper models $\XX$ of $F$ which are 
obtained as a sequence of blow-ups of 
$\XX_0$ centered at closed points of $\XX_0$.
\end{proof}

\begin{remark}
\label{divisorial}
The proof of (\ref{union_of_Sha_X_equals_Sha_dvr}) 
also works if we just consider divisorial discrete valuations 
instead of considering all discrete valuations on $F$.
\end{remark}

\vspace{0.1in}

\section{Local Global Principle}

In this section, we prove the local-global 
principle for norm one tori over semi-global fields with respect to 
points on the reduced special fibre under some assumptions on 
the semi-global field and the residue field. This, combined with 
Theorem \ref{union_of_Sha_X_equals_Sha_dvr}, allows us to 
conclude that under the same assumptions, the local-global principle for 
norm one tori also holds with respect to discrete valuations.

\begin{lemma}
\label{branch} 
Let $K$ be a complete discretely valued field with 
residue field $\kappa $ and 
$F$ be 
the function field of a smooth, projective, 
geometrically integral curve over $K.$ 
Let $\XX$ be a regular proper model of $F$ with 
the reduced special fibre $X$ 
a union of regular curves with normal crossings. 
Let $L/F$ be a Galois extension over $F$ of degree $n.$ 
Let $P \in X$ be a closed point and 
$U$ an irreducible open subset of 
$X$ with $P$ in the closure of $U.$ 
Suppose that: \\

 $\bullet$ $n$ is coprime to char$(\kappa ),$ \\
 
 $\bullet$ $K$ contains a primitive $n^{\rm th}$ root of unity $\rho,$ and\\
 
 $\bullet$ for all finite Galois extensions 
 $l/\kappa(P)$ of degree $d$ dividing $n,$ 
 $$\displaystyle T_{l/\kappa(P)}(\kappa(P)) = 
 RT_{l/\kappa(P)}(\kappa(P)) <\rho^{\frac{n}{d}}>.$$ 
 
 Then $\displaystyle T_{L\otimes_F F_{P,U}/F_{P, U}}(F_{P,U}) = 
 RT_{L\otimes_F F_{P, U}/F_{P,U}}(F_{P,U})<\rho>.$ 
\end{lemma}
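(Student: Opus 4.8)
The plan is to peel off, one discrete valuation at a time, the two-dimensional local structure of $F_{P,U}$ and then invoke the hypothesis on residue fields at $P$, using Lemma~\ref{residue_field_assumption} twice and Corollary~\ref{product_of_field_cor} once.

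First I would unwind the structure of $F_{P,U}$. Since $\XX$ is regular, $\widehat{R}_P$ is a two-dimensional complete regular local ring with residue field $\kappa(P)$; because $X$ has normal crossings and $P$ lies in the closure of $U=X_\eta$, the maximal ideal is $\mathfrak{m}_P=(\pi_1,\pi_2)$ with $\pi_1$ a local equation for $X_\eta$ at $P$. Then $F_{P,U}$ is the completion of $F_P=\mathrm{Frac}(\widehat{R}_P)$ at the $\pi_1$-adic valuation; it is a complete discretely valued field whose residue field $E$ is the fraction field of $\widehat{R}_P/(\pi_1)$. Now $\widehat{R}_P/(\pi_1)$ is a complete discrete valuation ring with parameter $\overline{\pi}_2$ and residue field $\kappa(P)$, so $E$ is itself a complete discretely valued field with residue field $\kappa(P)$. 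Moreover $K\subseteq F\subseteq F_{P,U}$ and $K\hookrightarrow \widehat{R}_P/(\pi_1)\hookrightarrow E$, so both $F_{P,U}$ and $E$ contain a primitive $n^{\rm th}$ root of unity, namely the image of $\rho$ (its order is preserved, the maps being injective on $K$).

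Next I would reduce to a single Galois field extension. Since $L/F$ is Galois of degree $n$, the étale algebra $L\otimes_F F_{P,U}$ is a product of $r$ copies of one Galois extension $L'/F_{P,U}$, of degree $d$, with $rd=n$; set $r=n/d$. As $F_{P,U}$ contains the primitive $(dr)^{\rm th}=n^{\rm th}$ root of unity $\rho$, Corollary~\ref{product_of_field_cor} reduces the claim to $T_{L'/F_{P,U}}(F_{P,U})=RT_{L'/F_{P,U}}(F_{P,U})\langle\rho^{n/d}\rangle$ (noting $\rho^{r}=\rho^{n/d}$). Now $L'/F_{P,U}$ is a Galois extension of complete discretely valued fields of degree $d\mid n$, coprime to $\mathrm{char}(E)=\mathrm{char}(\kappa)$; being tamely ramified it has separable, hence Galois, residue extension $l'/E$ of degree $f\mid d$. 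Likewise, viewing $E$ as a complete discretely valued field with residue field $\kappa(P)$, the extension $l'/E$ has Galois residue extension $l''/\kappa(P)$ of degree $g\mid f$. By the third hypothesis applied to the Galois extension $l''/\kappa(P)$ of degree $g\mid n$, we get $T_{l''/\kappa(P)}(\kappa(P))=RT_{l''/\kappa(P)}(\kappa(P))\langle\rho^{n/g}\rangle$. Since $(\rho^{n/f})^{f/g}=\rho^{n/g}$, Lemma~\ref{residue_field_assumption} applied to $l'/E$ gives $T_{l'/E}(E)=RT_{l'/E}(E)\langle\rho^{n/f}\rangle$; since $(\rho^{n/d})^{d/f}=\rho^{n/f}$, a second application to $L'/F_{P,U}$ yields $T_{L'/F_{P,U}}(F_{P,U})=RT_{L'/F_{P,U}}(F_{P,U})\langle\rho^{n/d}\rangle$, which together with the reduction above proves the lemma.

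The routine parts are the Galois-theoretic bookkeeping; the only genuinely delicate points are the identification of the tower of residue fields $F_{P,U}$, then $E$, then $\kappa(P)$ (each complete discretely valued over the next), the verification that the successive residue extensions are Galois with degree dividing $n$ (which holds because all degrees involved are prime to the residue characteristic), and keeping the root-of-unity exponents consistent across the two applications of Lemma~\ref{residue_field_assumption}.
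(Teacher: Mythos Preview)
Your proposal is correct and follows essentially the same approach as the paper's own proof: identify $F_{P,U}$ as a complete discretely valued field whose residue field (your $E$, the paper's $\kappa(U)_P$) is itself complete discretely valued with residue field $\kappa(P)$, decompose $L\otimes_F F_{P,U}$ as a product of copies of a single Galois extension, apply Lemma~\ref{residue_field_assumption} twice to climb up the tower, and finish with Corollary~\ref{product_of_field_cor}. Your write-up is in fact somewhat more careful than the paper's about why the successive residue extensions are Galois and about tracking the root-of-unity exponents.
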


\begin{proof}
 Let $\kappa(U)$ be the function field of $U.$ 
 Since $X$ is a union of regular curves, 
 $P$ gives a discrete valuation on $\kappa(U).$ 
 Let $\kappa(U)_P$ be the completion of $\kappa(U)$ at $P.$ 
 Then, by definition, $F_{P, U}$ is a complete discretely valued field 
 with residue field $\kappa(U)_P.$ 
 Since $L/F$ is a Galois extension of degree $n,$ 
 $L \otimes_F F_{P, U} \simeq \prod L_0$ for 
 some finite Galois extension $L_0/F_{P, U}$ of 
 degree $d$ dividing $n.$ 
 Since $F_{P, U}$ is a complete discretely valued field 
 with residue field $\kappa(U)_P,$ $L_0$ is a 
 complete discretely valued field with residue field $M_0$ 
 a finite extension of $\kappa(U)_P$ 
 of degree $d_1$ dividing $d.$ 
 Since $\kappa(U)_P$ is a complete discretely valued field with 
 residue field $\kappa(P),$ 
 $M_0$ is a complete discretely valued field with 
 residue field $l_0$ a finite Galois extension of $\kappa(P)$ 
 of degree $d_2$ dividing $d_1.$
Hence, by the assumption on $\kappa(P)$ and 
(\ref{residue_field_assumption}), we have 
$$T_{M_0/\kappa(U)_P}(\kappa(U)_P) = 
RT_{M_0/\kappa(U)_P}(\kappa(U)_P)
<\rho^{\frac{n}{d_1}}>.$$ 
Hence, once again by (\ref{residue_field_assumption}), 
we have $$T_{L_0/F_{P, U}}(F_{P, U}) = 
RT_{L_0/F_{P, U}}(F_{P, U})<\rho^{n/d}>.$$ 
Since $L\otimes_F F_{P, U}$ is the product of $\frac{n}{d}$ 
copies of $L_0,$ by (\ref{product_of_field_cor}), 
we have $$T_{L\otimes_F F_{P, U}/F_{P, U}}(F_{P, U}) = 
RT_{L\otimes_F F_{P, U}/F_{P, U}}(F_{P, U}) <\rho>.$$ 
\end{proof}

We are now ready to prove the local-global 
principle for norm one tori over semi-global fields 
with respect to points on the reduced special fibre of 
the model under some assumptions on the given semi-global 
field $F$ and the residue field $\kappa$.

\begin{theorem} 
\label{Sha_X_vanishes}
 Let $K$ be a complete discretely valued field 
 with residue field $\kappa $ and 
 $F$ be 
 the function field of a smooth, projective, 
geometrically integral curve over $K.$ 
 Let $\XX$ be a regular proper model of $F$ with 
 reduced special fibre $X$ 
 a union of regular curves with normal crossings. 
 Let $L/F$ be a Galois extension over $F$ of degree $n.$ 
 Suppose \\
 
 $\bullet$ $n$ is coprime to char$(\kappa ),$ \\
 
 $\bullet$ $K$ contains a primitive $n^{\rm th}$ root of unity $\rho,$ \\
 
 $\bullet$ for all finite extensions $\kappa'/\kappa $ and 
 for all finite Galois extensions $l/\kappa'$ of degree $d$ dividing $n,$ 
$$T_{l/\kappa '}(\kappa ') = RT_{l/\kappa'}(\kappa') <\rho^{\frac{n}{d}}>,$$ \\
$\bullet$ the graph associated to $\XX$ is a tree.\\

Then $ \Sh _{X}(F, T_{L/F}) = 0.$ 
\end{theorem}

\begin{proof}
 Let $\mathcal{P}$ be a finite set of closed points of $X$ 
 containing all the nodal points of $X.$ 
By (\cite[Corollary 5.9.]{HHK1}), it is enough to show that 
$\Sh _{\mathcal{P}}(F, T_{L/F})=0.$ 
Let $X \setminus \mathcal{P} = \cup_i U_i.$ 
Then each $U_i$ is an irreducible open subset of $X.$ 
By (\cite[Corollary 3.6.]{HHK1}), it is enough to show that 
the product map 
$$ \displaystyle \psi: \prod_i T_{L/F} (F_{U_i}) \times 
\prod_{P \in \mathcal{P}} T_{L/F}(F_P) \to 
\prod _{(P,U_i)} T_{L/F}(F_{P,U_i})$$ 
is onto, 
where the product on the right hand side is taken 
over all pairs $(P, U_i)$ with $P \in \mathcal{P}$ and 
$U_i$ such that $P$ in the closure of $U_i.$ 

 Let $\displaystyle (\lambda _{P,U_i}) 
 \in \prod _{(P,U_i)} T_{L/F}(F_{P,U_i}).$ 
 We show that $\displaystyle (\lambda _{P,U_i})$ 
 is in the image of $\psi.$ 
 By (\ref{branch}), for each pair $(P, U_i)$ with 
 $P$ in the closure of $U_i,$ we have 
 $ \displaystyle \lambda _{P,U_i} = 
 \rho^{j_{P,U _i}} \mu _{P,U_i}$ 
 for some integer $j_{P,U_i}$ and 
 $\mu _{P,U_i} \in RT_{L/F}(F_{P,U_i}).$ 
 Let $G$ be the Galois group of $L/F.$ 
 For each $\sigma \in G,$ there exists 
 $a_{\sigma, P, U_i} \in (L\otimes_F F_{P, U_i})^{\times}$ 
 such that $$\displaystyle \mu_{P,U_i} = 
 \prod _{\sigma \in G(L/F)} \sigma (a_{\sigma ,P,U_i}) 
 (a_{\sigma ,P,U_i})^{-1}.$$ 
 
 Since the group $R_{L/F}(\mathbb{G}_m)$ is $F$-rational, 
 by (\cite[Theorem 3.6.]{HHK}), 
 $$ \displaystyle \prod_i (L\otimes_F F_{U_i})^{\times} \times 
 \prod_{P \in \mathcal{P}} (L\otimes_F F_P)^{\times} \to 
 \prod _{(P,U_i)} (L\otimes_F F_{P,U_i})^{\times}$$ is onto. 
 Hence for each $\sigma \in G,$ there exist 
 $b_{\sigma, U_i} \in (L\otimes_F F_{U_i})^{\times}$ and 
 $b_{\sigma, P} \in (L\otimes_F F_P)^{\times}$ such that 
 $a_{\sigma, P, U_i} = b_{\sigma, U_i} b_{\sigma, P}.$ 
We have 
$$
\begin{array}{rcl}
 \mu_{P,U_i} & = & \displaystyle \prod _{\sigma \in G(L/F)} \sigma (a_{\sigma ,P,U_i}) (a_{\sigma ,P,U_i})^{-1} \\
 & = & \displaystyle \prod _{\sigma \in G(L/F)} \sigma (b_{\sigma, U_i} b_{\sigma, P}) (b_{\sigma, U_i} b_{\sigma, P})^{-1} \\
 & = & \displaystyle \prod _{\sigma \in G(L/F)} \sigma (b_{\sigma, U_i}) (b_{\sigma, U_i} )^{-1} \sigma(b_{\sigma, P}) ( b_{\sigma, P})^{-1}.
 \end{array}
 $$ 
 Since $\sigma (b_{\sigma, U_i}) (b_{\sigma, U_i} )^{-1} 
 \in T_{L/F}(F_{U_i})$ and 
$\sigma(b_{\sigma, P}) ( b_{\sigma, P})^{-1} 
\in T_{L/F}(F_P),$ $(\mu_{P, U_i})$ 
is in the image of $\psi.$

 Since $T_{L/F}(F)$ is a group and $\rho \in T_{L/F}(F),$ 
 by (\ref{tree}), $(\rho^{j_{P,U_i}})$ is in the image of $\psi.$ 
 Since $\psi$ is a homomorphism, $\displaystyle (\lambda _{P,U_i})$ 
 is in the image of $\psi,$ hence proving that $\psi$ is onto.
\end{proof}

Using the above theorem and 
Theorem \ref{union_of_Sha_X_equals_Sha_dvr}, 
we get that the under same assumptions, 
we have the local-global principle for 
norm one tori over semi-global fields 
with respect to discrete valuations:

\begin{theorem} 
\label{dvrsha}
With the hypothesis as in Theorem \ref{Sha_X_vanishes}, 
we have $\Sh (F, T_{L/F})=0.$
\end{theorem}

\begin{proof}
Let $\XX$ be a regular proper model of $F$ 
which is obtained as a sequence of blow-ups of 
$\XX_0$ at closed points. Since the graph 
$\Gamma(\XX_0)$ is a tree, 
$\Gamma(\XX)$ is also a tree (\cite[Remark 6.1(b)]{HHK1}). 
Let $X$ be the reduced special fibre of $\XX$. 
Then $\Sh _{X}(F, T_{L/F}) = 0$  (\ref{Sha_X_vanishes}). 
Thus, by (\ref{union_of_Sha_X_equals_Sha_dvr}), 
we have $\Sh(F, T_{L/F}) = 0.$ 
\end{proof}

%\begin{cor}
%\label{algclosed}
 %Let $K$ be a complete discretely valued field with 
 %residue field $\kappa$ algebraically closed. 
 %Let $F$ be the function field of a smooth, projective, 
%geometrically integral curve over $K$ and 
%$L/F$ be a finite Galois extension of degree $n$ 
 %with $(n,char(\kappa ))=1.$ 
 %Let $\XX$ be a regular proper model of $F$ 
 %with reduced special fibre $X$ a union of regular curves 
 %with normal crossings. 
 %Suppose that the graph associated to $\XX$ is a tree. 
 %Then $\Sh (F, T_{L/F})=0.$ 
%\end{cor}
 
\begin{cor}
\label{sha_m_local}
 Let $K$ be an $m$-local field with first residue field $\kappa$ 
or an iterated Laurent series in $m$ variables
over an algebraically closed field $\kappa$. 
Let $F$ be the function field of a smooth, projective, geometrically integral curve over $K$ and 
$L/F$ be a finite Galois extension of degree $n$ with $(n,char(\kappa ))=1.$ 
 Let $\XX$ be a regular proper model of $F$ with reduced special fibre 
 $X$ a union of regular curves with normal crossings. 
 Suppose that the graph associated to $\XX$ is a tree. 
 If $K$ contains a primitive $n^{\rm th}$ root of unity, 
 then $\Sh (F, T_{L/F})=0.$ 
\end{cor}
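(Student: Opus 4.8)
The plan is to reduce Corollary \ref{sha_m_local} to Theorem \ref{dvrsha}. Concretely, I need to verify that the hypotheses of \ref{dvrsha} are satisfied when $K$ is an $m$-local field with residue field $\kappa$ and $F$ is the function field of a curve over $K$, given that $n$ is coprime to $\mathrm{char}(\kappa)$ and $K$ contains a primitive $n^{\mathrm{th}}$ root of unity. Two of the four bullets in \ref{dvrsha} are immediate: coprimality of $n$ to $\mathrm{char}(\kappa)$ is assumed, and the existence of $\rho \in K$ a primitive $n^{\mathrm{th}}$ root of unity is assumed. The tree hypothesis on $\XX$ is also part of the statement. So the only thing to check is the third bullet: for every finite extension $\kappa'/\kappa$ and every finite Galois extension $l/\kappa'$ of degree $d \mid n$, one has $T_{l/\kappa'}(\kappa') = RT_{l/\kappa'}(\kappa')\langle \rho^{n/d}\rangle$.

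The key observation is that when $K$ is $m$-local with residue field $\kappa$, the field $\kappa$ is either an $(m-1)$-local field (if $m \geq 2$) or a finite field (if $m=1$); in either case $\kappa$ is itself a $j$-local field for some $j \geq 0$, where by convention a $0$-local field is a finite field. Any finite extension $\kappa'/\kappa$ is then again $j$-local with the same first residue field, which is finite of some characteristic $p \nmid n$. Now for a finite Galois extension $l/\kappa'$ of degree $d$ dividing $n$: since $d \mid n$ and $\rho \in K$, the image $\bar{\rho}$ of $\rho$ in $\kappa'$ — more precisely $\rho$ reduces through the chain of residue fields — is a primitive $n^{\mathrm{th}}$ root of unity in $\kappa'$ (reduction of roots of unity of order prime to the residue characteristic is injective at each stage by Hensel), so $\kappa'$ contains a primitive $n^{\mathrm{th}}$ root of unity, hence a primitive $d^{\mathrm{th}}$ root of unity. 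Then Corollary \ref{mlocal} applied to the $j$-local field $\kappa'$ and the extension $l/\kappa'$ of degree $d$ (with $d$ coprime to the characteristic of the first residue field of $\kappa'$, which equals the first residue field of $\kappa$) gives $T_{l/\kappa'}(\kappa') = RT_{l/\kappa'}(\kappa')\langle \bar\rho_d\rangle$ for a primitive $d^{\mathrm{th}}$ root of unity $\bar\rho_d \in \kappa'$. Taking $\bar\rho_d = \rho^{n/d}$ (which is a primitive $d^{\mathrm{th}}$ root of unity once we identify $\rho$ with its image) yields exactly the required identity $T_{l/\kappa'}(\kappa') = RT_{l/\kappa'}(\kappa')\langle \rho^{n/d}\rangle$.

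Having checked all four bullets, Theorem \ref{dvrsha} applies directly and gives $\Sh(F, T_{L/F}) = 0$, which is the assertion of \ref{sha_m_local}. I expect the only mildly delicate point to be the bookkeeping around roots of unity: one must be careful that "$K$ contains a primitive $n^{\mathrm{th}}$ root of unity" propagates correctly down the tower of residue fields so that the group generated by $\rho^{n/d}$ inside $T_{l/\kappa'}(\kappa')$ is genuinely the group generated by a primitive $d^{\mathrm{th}}$ root of unity there — this is where one uses that $n$ (and hence $d$) is prime to all the residue characteristics involved, together with the injectivity of reduction on prime-to-$p$ roots of unity. Everything else is a direct citation of \ref{mlocal} and \ref{dvrsha}, so the corollary is essentially a formal consequence. (Corollary \ref{algclosed} is handled the same way, using instead that over an algebraically closed $\kappa$ every $T_{l/\kappa'}$ is trivial since there are no nontrivial finite extensions, so the third bullet holds vacuously.)
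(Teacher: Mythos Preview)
Your proposal is correct and follows exactly the paper's approach: the paper's proof is the single line ``Follows from (\ref{mlocal}) and (\ref{dvrsha})'', and you have simply unpacked how Corollary~\ref{mlocal} verifies the third bullet of Theorem~\ref{dvrsha}. The bookkeeping you flag about reducing $\rho$ through the tower of residue fields (and the base case where $\kappa$ is finite, handled by Hilbert~90) is the only content beyond direct citation, and you treat it correctly.
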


\begin{proof}
 Follows from (\ref{mlocal}) and (\ref{dvrsha}).
\end{proof}

\begin{remark}
By (\ref{divisorial}), the results also hold true if 
we just consider divisorial discrete valuations 
instead of all discrete valuations on $F$ 
in (\ref{dvrsha}) and (\ref{sha_m_local}).

\end{remark}

\vspace{0.1in}

\section{Counterexamples}

Let $K$ be a complete discretely valued field 
with residue field algebraically 
closed. Colliot-Th\'el\`ene, Parimala and Suresh 
(\cite[Section 3.1. \& Proposition 5.9.]{CTPS1}) 
constructed a function field
of a smooth, projective, 
geometrically integral curve over $K$ and a 
Galois extension $L/F$ with 
Galois group $\Z/2\Z \times \Z/2\Z$ 
such that the local-global principle fails for 
the norm one torus $T_{L/F}$ associated to $L/F$.
They use higher reciprocity laws to detect 
non-trivial elements in $\Sh(F, T_{L/F})$.
In this section, we produce examples of 
Galois extensions $L/F$ with Galois group 
$\Z/n\Z \times \Z/n\Z$ 
and using patching techniques, 
we show that $\Sh(F, T_{L/F})$ 
is non-trivial. 

Let $k$ be a number field and $L_1, L_2$ be two Galois extensions of $k$.
Let $T$ be the $k$-torus given by $N_{L_1/k}(z_1) N_{L_2/k}(z_2) = 1$.
If $L_1$ and $L_2$ are linearly disjoint, then Demarche and Wei 
(\cite[Theorem 1]{DW}) proved that 
the local-global principle holds for $T$. 
In this section, we also give an example to 
show that a similar result does not hold in general 
for function fields of curves over a complete discretely valued field.

\begin{prop}
\label{ufd}
 Let $A$ be a unique factorization domain 
 and $F$ be its fraction field. 
 Let $L/F$ be a finite Galois extension 
 and $B$ be the integral closure of $A$ in $L.$ 
 Suppose that $B$ is a unique factorization domain. 
 Then every element in $T_{L/F}(F)$ can be written as 
 $s \theta $ for some $s \in RT_{L/F}(L)$ and 
 $\theta \in B$ a unit. 
\end{prop}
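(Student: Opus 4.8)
The plan is to work inside the UFD $B$, matching the divisor of a given element of $T_{L/F}(F)$ against the divisor of an explicit $R$-trivial element and then splitting off a unit of $B$. I would in fact prove the slightly stronger statement that one may take $s \in RT_{L/F}(F)$; this implies the proposition, since $RT_{L/F}(F) \subseteq RT_{L/F}(L)$ under the natural inclusion $T_{L/F}(F) \subseteq T_{L/F}(L)$.

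I would begin by collecting standard facts. As $A$ is a UFD it is integrally closed, so $B$ is a normal domain with $B^G = A$, where $G = \mathrm{Gal}(L/F)$, and $G$ permutes the height-one primes of $B$, acting transitively on those lying over a fixed height-one prime of $A$. Since $B$ is a UFD, each height-one prime $P$ is generated by a prime element $\pi_P$, the divisor $\mathrm{div}_B(y) = \sum_P v_P(y)\,[P]$ of $y \in L^*$ is a well-defined finite $\Z$-combination of height-one primes, and $\mathrm{div}_B(y) = 0$ if and only if $y \in B^\times$. The key observation is that for any prime element $\pi$ and any $\sigma \in G$, taking $a := \pi \in (L \otimes_F F)^* = L^*$ exhibits $\pi^{-1}\sigma(\pi)$ as an element of $RT_{L/F}(F)$, with $\mathrm{div}_B\big(\pi^{-1}\sigma(\pi)\big) = [\sigma P] - [P]$ for $P = (\pi)$.

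Next, let $x \in T_{L/F}(F)$, so that $N_{L/F}(x) = \prod_{\sigma \in G}\sigma(x) = 1$. Applying $\mathrm{div}_B$ and using $\mathrm{div}_B(\sigma(x)) = \sum_P v_P(x)\,[\sigma P]$ (as $\sigma$ is a ring automorphism of $B$) gives $\sum_{\sigma \in G}\sum_P v_P(x)\,[\sigma P] = 0$. Reading off the coefficient of a fixed prime $Q$: for $P$ in the $G$-orbit $\mathcal{O}$ of $Q$, the set $\{\sigma \in G : \sigma P = Q\}$ is a coset of $\mathrm{Stab}_G(P)$ of size $|\mathrm{Stab}_G(Q)| \geq 1$, so that coefficient equals $|\mathrm{Stab}_G(Q)|\cdot\sum_{P \in \mathcal{O}} v_P(x)$. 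Hence $\sum_{P \in \mathcal{O}} v_P(x) = 0$ for every $G$-orbit $\mathcal{O}$ of height-one primes of $B$: the multiplicities of $\mathrm{div}_B(x)$ sum to zero along each orbit. This is the crux of the argument — it is precisely what allows $\mathrm{div}_B(x)$ to be written as a $\Z$-combination of the difference divisors $[\sigma P] - [P]$ from the previous paragraph.

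To finish, let $\mathcal{O}$ range over the finitely many $G$-orbits meeting the support of $\mathrm{div}_B(x)$. For each such orbit, fix $P_0 \in \mathcal{O}$ with generator $\pi_0$ and, for every $Q \in \mathcal{O}$, choose $\sigma_Q \in G$ with $\sigma_Q P_0 = Q$ (with $\sigma_{P_0} = 1$), and put $s_\mathcal{O} := \prod_{Q \in \mathcal{O}}\big(\pi_0^{-1}\sigma_Q(\pi_0)\big)^{v_Q(x)} \in RT_{L/F}(F)$. Using $\sum_{Q \in \mathcal{O}} v_Q(x) = 0$, one computes $\mathrm{div}_B(s_\mathcal{O}) = \sum_{Q \in \mathcal{O}} v_Q(x)\big([Q]-[P_0]\big) = \sum_{Q \in \mathcal{O}} v_Q(x)\,[Q]$, the restriction of $\mathrm{div}_B(x)$ to $\mathcal{O}$. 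Since distinct orbits have disjoint supports, $s := \prod_\mathcal{O} s_\mathcal{O} \in RT_{L/F}(F)$ satisfies $\mathrm{div}_B(s) = \mathrm{div}_B(x)$, so $\theta := x s^{-1}$ has trivial divisor and therefore lies in $B^\times$; then $x = s\,\theta$ with $s \in RT_{L/F}(F) \subseteq RT_{L/F}(L)$ and $\theta$ a unit of $B$ (one even gets $N_{L/F}(\theta) = 1$ for free, each $\pi^{-1}\sigma(\pi)$ having norm one, though this is not needed). Beyond the sum-zero-along-orbits identity, the only points that need care are the standard commutative-algebra inputs in the setup paragraph — normality of $B$, transitivity of the $G$-action on the primes over a given prime, and compatibility of $\mathrm{div}_B$ with the Galois action — so the real work is in those routine verifications.
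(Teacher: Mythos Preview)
Your proof is correct, and the core idea coincides with the paper's: for a norm-one element the primes appearing in numerator and denominator are Galois conjugate, and each such conjugate pair contributes an $R$-trivial factor $\pi^{-1}\sigma(\pi)$. The packaging differs. The paper argues by induction on the number of prime factors of the numerator: writing $\lambda=\alpha/\beta$ with $\alpha,\beta\in B$, it observes that any prime $p\mid\alpha$ forces (via $N_{L/F}(\alpha)=N_{L/F}(\beta)$) a Galois-conjugate prime $p'\mid\beta$, factors out $\sigma(p')/p'$, and repeats. Your argument replaces this inductive peeling by the single global identity $\sum_{P\in\mathcal{O}} v_P(x)=0$ on each $G$-orbit, then writes down $s$ in one stroke by matching divisors. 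Your route makes the structure (degree-zero on each orbit) more visible and avoids the somewhat informal induction; the paper's route is shorter and needs no bookkeeping of orbits or base points. Both yield $s\in RT_{L/F}(F)$, which is indeed what the paper uses downstream despite the ``$RT_{L/F}(L)$'' in the statement.
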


\begin{proof}
 Let $\lambda \in T_{L/F}(F).$ 
 Then $\lambda \in L^{\times}$ and $N_{L/F}(\lambda) = 1.$ 
 Since $L$ is the fraction field of $B,$ 
 $\displaystyle \lambda = \frac{\alpha}{\beta}$ 
 for some $\alpha, \beta \in B.$ 
 Since $N_{L/F}(\lambda) = 1,$ $N_{L/F}(\alpha) = N_{L/F}(\beta).$ 
 Let $p \in B$ be a prime. 
 Since $A$ is a unique factorization domain, 
 $pB \cap A = qA$ for some prime $q \in A$ and 
 $N_{L/F}(p) = v q^r$ for some unit $v \in A.$ 
 Suppose that $p $ divides $\alpha$ in $B.$ 
 Then $N_{L/F}(p)$ divides $N_{L/F}(\alpha)$ in $A$ 
 and hence $q$ divides $N_{L/F}(\alpha).$ 
 Since $N_{L/F}(\alpha) = N_{L/F}(\beta),$ 
 there exists a prime $p' \in B$ such that 
 $p'$ divides $\beta$ and $p'B \cap A = qA$. %(ref).
 Since $L/F$ is a Galois extension, 
 there exists $\sigma \in Gal(L/F)$ such that 
 $p = w\sigma(p')$ for some unit $w \in B$. %(ref). 
 Write $\alpha = p \alpha'$ and $\beta = p' \beta'.$ 
 Then $\displaystyle \lambda = \frac{\alpha}{\beta} = 
 \frac{p}{p'} \frac{\alpha'}{\beta'} = 
 \frac{\sigma{p'}}{p'}\frac{w\alpha'}{\beta'}.$ 
 Since $B$ is a unique factorization domain, 
 the proposition follows by induction on the 
 number of prime factors of $\alpha $ in $B.$
\end{proof}

\begin{prop} 
\label{R-trivial_times_rho}
 Let $A$ be a complete regular local ring of dimension $2$ with 
 maximal ideal $(\pi, \delta),$ fraction field $F$ and residue field $\kappa.$ 
 Let $n$ be a positive integer which is coprime to char$(\kappa).$ 
 Let $L = F(\sqrt[n]{\pi}, \sqrt[n]{\delta}).$ 
 Suppose that $F$ contains a 
 primitive $n^2$- th root of unity $\rho.$ 
 Then $T_{L/F}(F) = RT_{L/F}(F)<\rho>.$ 
\end{prop}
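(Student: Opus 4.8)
The plan is to identify $T_{L/F}(F)/RT_{L/F}(F)$ with a Tate cohomology group and to compute it through the integral closure $B$ of $A$ in $L$. Write $G=\mathrm{Gal}(L/F)$. Since $RT_{L/F}(F)$ is the subgroup of $L^*$ generated by the elements $a^{-1}\sigma(a)$ with $a\in L^*$ and $\sigma\in G$ (\cite[Proposition 15]{CTS}), it equals $I_G\cdot L^*$, where $I_G\subset\mathbf Z[G]$ is the augmentation ideal; and $T_{L/F}(F)=\ker(N_{L/F}\colon L^*\to F^*)$. Hence, by the definition of Tate cohomology, $T_{L/F}(F)/RT_{L/F}(F)=\widehat H^{-1}(G,L^*)$, and the class $[\rho]$ makes sense here because $N_{L/F}(\rho)=\rho^{n^2}=1$. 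So the statement is equivalent to: $\widehat H^{-1}(G,L^*)$ is generated by $[\rho]$.

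First I would record the structure of $B$. Applying \cite[Lemma 3.2.]{PS} along the tower $F\subseteq F(\sqrt[n]{\pi})\subseteq L$ shows that $B$ is a complete regular local ring, hence a UFD (as is $A$), with maximal ideal $\mathfrak n=(\sqrt[n]{\pi},\sqrt[n]{\delta})$ and residue field $\kappa$; in particular $[L:F]=n^2$, so $G\cong\mathbf Z/n\times\mathbf Z/n$, and $G$ acts trivially on $B/\mathfrak n=\kappa$ since it fixes $A$. Because $n^2$ is coprime to char$(\kappa)$ and $\rho\in A^*$, its image $\bar\rho\in\kappa^*$ is still a primitive $n^2$-th root of unity, so $\mu_{n^2}(\kappa)=\langle\bar\rho\rangle$.

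Now, given $\lambda\in T_{L/F}(F)$, \ref{ufd} lets me write $\lambda=s\theta$ with $s\in RT_{L/F}(F)$ and $\theta\in B^*$ of norm $1$. Reducing $\prod_{\sigma\in G}\sigma(\theta)=1$ modulo $\mathfrak n$, and using that $G$ acts trivially on $\kappa$, gives $\bar\theta^{\,n^2}=1$ in $\kappa^*$, hence $\bar\theta=\bar\rho^{\,j}$ for some $j\in\mathbf Z$; replacing $\theta$ by $\eta:=\theta\rho^{-j}$, which still has norm $1$ since $\rho^{n^2}=1$, I may assume $\eta\in(1+\mathfrak n)\cap T_{L/F}(F)$. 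It then remains to show $\eta\in RT_{L/F}(F)$, and this is the heart of the matter: it amounts to $\widehat H^{-1}(G,1+\mathfrak n)=0$. By Hensel's lemma in the complete local ring $B$ (using $(n,\mathrm{char}(\kappa))=1$), the $n$-th power map on $1+\mathfrak n$ is bijective, so $1+\mathfrak n$ is uniquely $|G|$-divisible; since $|G|$ annihilates $\widehat H^i(G,-)$ for all $i$ while acting invertibly on $1+\mathfrak n$, all these groups vanish. As $N_{L/F}(\eta)=1$, the class of $\eta$ lies in $\widehat H^{-1}(G,1+\mathfrak n)=0$, i.e. $\eta\in I_G(1+\mathfrak n)\subseteq I_GL^*=RT_{L/F}(F)$. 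Therefore $\lambda=s\eta\rho^{j}\in RT_{L/F}(F)\langle\rho\rangle$; the reverse inclusion is clear as $\rho\in T_{L/F}(F)$, so $T_{L/F}(F)=RT_{L/F}(F)\langle\rho\rangle$. The points needing the most care are the appeal to \cite[Lemma 3.2.]{PS} to see that $B$ is regular local with the stated maximal ideal and residue field $\kappa$ (so that \ref{ufd} applies and the reduction modulo $\mathfrak n$ is legitimate), and the vanishing $\widehat H^{-1}(G,1+\mathfrak n)=0$, which is exactly where $(n,\mathrm{char}(\kappa))=1$ and the completeness of $A$ enter.
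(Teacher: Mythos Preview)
Your argument is correct and follows the same skeleton as the paper's: invoke \ref{ufd} to reduce $\lambda$ to a unit $\theta\in B^*$, use that $B$ is complete regular local with residue field $\kappa$ (the paper cites \cite[Corollary 3.3.]{PS} rather than Lemma~3.2, but the content is the same), and finish with Hensel's lemma. The execution differs in one place. The paper lifts $\bar\theta\in\kappa$ to some $\theta_1\in A$, writes $\theta=\theta_1\alpha^{n^2}$ via Hensel, applies Lemma~\ref{nthpower} to get $N_{L/F}(\alpha)^{-1}\alpha^{n^2}\in RT_{L/F}(F)$, and only at the end observes that the leftover element $a=\theta_1N_{L/F}(\alpha)\in F$ satisfies $a^{n^2}=1$, hence lies in $\langle\rho\rangle$. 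You instead extract the root of unity immediately from $\bar\theta^{\,n^2}=1$, and then kill the residual $\eta\in(1+\mathfrak n)\cap T_{L/F}(F)$ by the cohomological vanishing $\widehat H^{-1}(G,1+\mathfrak n)=0$, which follows from unique $|G|$-divisibility of $1+\mathfrak n$. Your route is cleaner conceptually (it isolates exactly why the obstruction is a root of unity and why nothing else survives), while the paper's route is more elementary in that it never names Tate cohomology and stays within the explicit identity of Lemma~\ref{nthpower}; underneath, both reduce to the same Hensel step.
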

 
\begin{proof}
 Let $B$ be the integral closure of $A$ in $L.$ 
 Then $B$ is a regular local ring of dimension $2$ with
 fraction field $L$ and residue field $\kappa$ (\cite[Corollary 3.3.]{PS}). 
 Let $\lambda \in T_{L/F}(F).$ 
 Then $\lambda \in L^{\times}$ with $N_{L/F}(\lambda) = 1.$ 
 Then, by (\ref{ufd}), there exists $s \in RT_{L/F}(F)$ 
 and a unit $\theta \in B$ such that $\lambda = s \theta.$ 
 Since the residue fields of $A$ and $B$ are equal, 
 there exists $\theta_1 \in A$ such that $\theta \equiv \theta_1$ 
 modulo the maximal ideal of $B.$ 
Since $n$ is coprime to char$(\kappa),$ 
by Hensel's lemma, we have $\theta = \theta_1\alpha^{n^2}$ 
for some unit $\alpha \in B.$ 
Let $s_1 = N_{L/F}(\alpha)^{-1}\alpha^{n^2} \in L.$ 
Then, by (\ref{nthpower}), $s_1 \in RT_{L/F}(F).$ 
Let $a = \theta_1N_{L/F}(\alpha) \in F.$ 
Then $\theta = a s_1.$ Thus 
$\lambda = s \theta = s a s_1= s s_1 a.$ 
Since $N_{L/F}(\lambda )= 1= N_{L/F}(s) = 
N_{L/F}(s_1),$ $1= N_{L/F}(a) = a^{n^2}.$ 
Thus $a \in <\rho>.$ 
Hence $\lambda = s s_1 a \in RT_{L/F}(F)<\rho>.$ 
\end{proof}

\begin{lemma}
\label{rho_non-trivial}
 Let $F$ be a complete discretely valued field
 with residue field $\kappa$ and ring of integers $R.$ 
 Let $n$ be a positive integer coprime to char$(\kappa).$ 
 Let $\pi \in R$ be a parameter and $u \in R$ a unit 
 with $\displaystyle [F(\sqrt[n]{u}):F] = n.$ 
 Let $\displaystyle L = F(\sqrt[n]{u}, \sqrt[n]{\pi}).$ 
 Suppose that $F$ contains a 
 primitive $n^2$-th root of unity $\rho.$ 
 Then $\displaystyle \rho^t \in RT_{L/F}(F)$ 
 if and only if $n$ divides $t.$ 
\end{lemma}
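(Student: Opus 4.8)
The plan is to analyze the extension $L = F(\sqrt[n]{u}, \sqrt[n]{\pi})$ by passing to the residue field and tracking what happens to the root of unity $\rho$. Since $F$ is complete with ring of integers $R$, parameter $\pi$, and $u$ a unit with $[F(\sqrt[n]{u}):F] = n$, the subextension $F(\sqrt[n]{u})/F$ is unramified of degree $n$ (its residue extension is $\kappa(\sqrt[n]{\bar u})/\kappa$ of degree $n$, using that $n$ is coprime to $\mathrm{char}(\kappa)$ and Hensel), while $F(\sqrt[n]{\pi})/F$ is totally ramified of degree $n$. Thus $L/F$ has ramification index $e = n$ and residue degree $f = n$, with residue field $l = \kappa(\sqrt[n]{\bar u})$. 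The Galois group is $\Z/n \times \Z/n$, and $T_{l/\kappa}$ corresponds to the cyclic (unramified) part.

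The key step is to determine, for which $t$ one has $\rho^t \in RT_{L/F}(F)$, and here I would invoke the structural description of $R$-trivial elements together with the behavior of $\rho$ under the residue map. Writing $\rho_n = \rho^{n}$ for a primitive $n^{\mathrm{th}}$ root of unity, the relation in (\ref{residue_field_assumption}) and its proof show that an element of $T_{L/F}(F)$ lies in $RT_{L/F}(F)$ precisely when (after correcting by a power of $\rho_n$ coming from the ramification) its image in $l$ lies in $RT_{l/\kappa}(\kappa)$ and in fact maps to $1$ there, by (\ref{1modulo_maximal_ideal}). Since $\rho \in R$ is a unit, its image $\bar\rho$ in $l$ is a primitive $n^{\mathrm{th}}$ root of unity (the $n^2$-th root of unity $\rho$ reduces to an $n$-th root of unity because $l$ contains $\mu_n$ but the extra factor of $n$ is killed in the residue field — more precisely, $\rho^n$ reduces to a primitive $n$-th root of unity while $\rho$ itself need not, so $\bar\rho$ generates $\mu_n \subseteq l^*$). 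Therefore $\rho^t$ reduces to $1$ in $l$ iff $n \mid t$. Combining: if $n \mid t$ then $\rho^t = (\rho^n)^{t/n} = \rho_n^{t/n}$, and one must still check $\rho_n^{t/n} \in RT_{L/F}(F)$; this is where I would use (\ref{nthpower}) applied to a suitable $n^{\mathrm{th}}$ root — since $F$ contains $\mu_{n^2}$, $\rho_n$ is an $n^{\mathrm{th}}$ power in $F$, say $\rho_n = \rho^n = (\rho)^n$, and $N_{L/F}(\rho) = \rho^{n^2} = 1$, so $\rho^n = N_{L/F}(\rho)^{-1}\rho^{n^2}\cdot(\text{correction})$... more cleanly, $\rho \in T_{L/F}(F)$ already since $N_{L/F}(\rho) = \rho^{[L:F]} = \rho^{n^2} = 1$, and $\rho$ has trivial image in $l$ iff $n \mid 1$, which fails, so $\rho \notin RT_{L/F}(F)$; but $\rho^n$ does have trivial image in $l$, hence $\rho^n \in RT_{L/F}(F)$ by (\ref{1modulo_maximal_ideal}) after verifying $\rho^n \in T_{L/F}(F)$, which holds since $N_{L/F}(\rho^n) = \rho^{n^3} = 1$.

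For the converse — if $\rho^t \in RT_{L/F}(F)$ then $n \mid t$ — I would argue contrapositively: suppose $n \nmid t$. Then $\bar\rho^{\,t} \neq 1$ in $l$, so $\bar\rho^{\,t}$ is a nontrivial element of $\mu_n(l) = \mu_n(\kappa)$. Since $l/\kappa$ is cyclic, $RT_{l/\kappa}(\kappa) = T_{l/\kappa}(\kappa)$ by Hilbert 90, so the residue obstruction alone does not immediately finish it; instead I would use that $R$-trivial elements of $T_{L/F}(F)$ whose norm-one condition interacts with the ramification must, by the computation in the proof of (\ref{residue_field_assumption}), reduce into $RT_{l/\kappa}(\kappa)$ only up to a power of $\rho_n^{n/f} = \rho_n$ — but the $\rho_n$-ambiguity is exactly $\mu_n$, and $\rho^t$ for $n \nmid t$ is genuinely not in $\mu_n$-coset of $RT_{l/\kappa}$... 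The \textbf{main obstacle} I anticipate is precisely pinning down this converse: showing that no $R$-trivial element can produce $\rho^t$ with $n \nmid t$, which requires a clean invariant. The natural candidate is a residue-type map $T_{L/F}(F)/RT_{L/F}(F) \to \mu_n$ (or into $l^*/(\text{norms})$ composed with the reduction), under which $\rho \mapsto \bar\rho \neq 1$; one shows this map kills $RT_{L/F}(F)$ using the generators $a^{-1}\sigma(a)$ and the fact that their residues land in $RT_{l/\kappa}(\kappa)$, hence are norms and die in the target. I would set this up carefully, likely by reducing mod the maximal ideal and using (\ref{1modulo_maximal_ideal}) in the form: the composite $T_{L/F}(F) \to l^*$, $z \mapsto \bar z$, has image landing in $T_{l/\kappa}(\kappa) \cdot \mu_n$ and sends $RT_{L/F}(F)$ into $RT_{l/\kappa}(\kappa) = T_{l/\kappa}(\kappa)$ modulo... and then one reads off that $\rho^t \in RT_{L/F}(F)$ forces $\bar\rho^{\,t} \in T_{l/\kappa}(\kappa)$, which is automatic, so I would need instead the finer invariant valued in $\mu_n(\kappa)$ itself via a splitting. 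Concretely: $\kappa$ contains $\mu_n$, and $l^* = \kappa^* \cdot \langle \sqrt[n]{\bar u}\rangle$-ish, giving a projection to $\mu_n$ compatible with everything, under which $RT_{L/F}(F)$ dies and $\rho \mapsto \bar\rho$. That is the step to get right; everything else is bookkeeping with Hensel's lemma and (\ref{nthpower}).
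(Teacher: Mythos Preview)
Your argument rests on a mistaken computation of the residue of $\rho$. Since $n$ is coprime to $\mathrm{char}(\kappa)$, so is $n^2$, and by Hensel's lemma the reduction map $\mu_{n^2}(R)\to\mu_{n^2}(\kappa)$ is a bijection. Thus $\bar\rho\in\kappa$ is a primitive $n^2$-th root of unity, not a primitive $n$-th root of unity, and $\overline{\rho^n}\neq 1$ in $l$. Your appeal to (\ref{1modulo_maximal_ideal}) to conclude $\rho^n\in RT_{L/F}(F)$ is therefore invalid: the hypothesis of that lemma is not met. (The easy direction is in fact immediate without residues: if $\tau$ is the generator of $Gal(L/F)$ fixing $\sqrt[n]{\pi}$, then $\rho^n=\sqrt[n]{u}^{\,-1}\tau(\sqrt[n]{u})\in RT_{L/F}(F)$.) The same error contaminates your converse, since ``$\bar\rho^{\,t}=1$ iff $n\mid t$'' is false; it holds iff $n^2\mid t$.

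Your instinct to build a residue-type invariant is sound, but you never land on the correct one. What works is the composite $z\mapsto N_{l/\kappa}(\bar z)$ from $T_{L/F}(F)$ to $\kappa^*$. Writing $a=\sqrt[n]{\pi}^{\,m}v$ with $v$ a unit and $\sigma=\sigma_0^i\tau_0^j$ (with $\sigma_0$ generating inertia and $\tau_0$ the unramified generator), one computes $\overline{a^{-1}\sigma(a)}=\bar\rho^{\,nim}\cdot\bar v^{-1}\bar\tau_0^{\,j}(\bar v)$, and $N_{l/\kappa}$ of this is $\bar\rho^{\,n^2im}\cdot 1=1$. So the invariant kills $RT_{L/F}(F)$, while it sends $\rho^t$ to $\bar\rho^{\,nt}$, which vanishes iff $n\mid t$. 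Once you correct the order of $\bar\rho$, this completes your line of argument.

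The paper's proof is quite different and avoids the residue field entirely. It writes an $R$-trivial element as $a^{-1}\sigma(a)\,b^{-1}\tau(b)$, applies $N_{L/L'}$ with $L'=F(\sqrt[n]{\pi})$ to kill the $\tau$-factor, and obtains $\rho^{nt}=c^{-1}\sigma(c)$ with $c=N_{L/L'}(a)\in L'$. From $\sigma(c^n)=c^n$ one gets $c^n\in F$, so $c=\theta\sqrt[n]{\pi}^{\,m}$ with $\theta\in F$; but $c$ is a norm from the unramified extension $L/L'$, forcing $n\mid m$ and hence $c\in F$, whence $\rho^{nt}=1$ and $n\mid t$. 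Both arguments are short; the paper's uses the intermediate totally ramified field and a valuation constraint, while the corrected version of yours uses the residue field and $N_{l/\kappa}$.
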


\begin{proof}
 Let $\sigma$ be the automorphism of $L/F$ given by 
 $\displaystyle \sigma(\sqrt[n]{\pi}) = 
 \rho ^n \sqrt[n]{\pi}$ and $\sigma(\sqrt[n]{u}) = 
 \sqrt[n]{u}$ and 
 $\tau $ be the automorphism $L/F$ given by 
 $\displaystyle \tau(\sqrt[n]{u}) = \rho ^n \sqrt[n]{u}$ 
 and $\tau(\sqrt[n]{\pi}) = \sqrt[n]{\pi}.$ 
 Then the Galois group of $L/F$ is an abelian group of order 
 $n^2$ generated by $\sigma$ and $\tau$ and  hence $\displaystyle RT_{L/F}(F)$ 
 is generated by the set 
 $\{ \frac{\sigma(a)}{a} \frac{\tau(b)}{ b} \mid a, b \in L^{\times} \}.$ 
 Since $\displaystyle \rho^n = \tau(\sqrt[n]{u})/ \sqrt[n]{u} \in RT_{L/F}(F),$ 
 $\rho^{nj} \in RT_{L/F}(F)$ for any integer $j.$

Conversely, suppose $\rho^t \in RT_{L/F}(F)$ for some integer $t.$ 
Without loss of generality, we may assume that $1\leq t \leq n^2.$ 
Then $\displaystyle \rho^t = a^{-1}\sigma (a) b^{-1}\tau (b)$ 
for some $a, b \in L.$ 
Let $L' = F(\sqrt[n]{\pi}).$ Since 
$\rho \in F$ and $N_{L/L'} (b^{-1}\tau(b)) = 1,$ 
we have 
$\displaystyle \rho^{nt} = N_{L/L'}(a)^{-1} N_{L/L'}(\sigma(a)).$ 
Let $c = N_{L/L'}(a) \in L'.$ 
Since $\displaystyle \sigma(c) = 
\sigma(N_{L/L'}(a)) = N_{L/L'}(\sigma(a)),$ 
we have $\sigma(c) = \rho^{nt} c.$ 
Hence $\displaystyle \sigma(c^n) = 
(\sigma(c))^n = (\rho^{nt})^n c^n = c^n.$ 
Since $L'/F$ is a Galois extension with Galois group generated 
by $\sigma,$ $c^n \in F.$ 
Thus $\displaystyle c = \theta \sqrt[n]{\pi}^m$ for 
some integer $m$ and $\theta \in F$. 
Since $L/L'$ is an unramified extension of degree $n$ 
and $c$ is a norm from $L/L',$ 
the valuation of $c$ is divisible by $n$. 
Since $\theta \in F$ and $\sqrt[n]{\pi}$ is a parameter in $L',$ 
$m = nr$ for some $r.$ Hence $c \in F$ and 
$\rho^{nt} = c^{-1}\sigma(c) = 1.$ 
Since $\rho$ is a primitive $n^2$-th root of unity, 
$n$ divides $t.$ 
 \end{proof}

\begin{notation}
\label{three_maximal_ideals}
 Let $A$ be a semi-local regular ring of dimension $2$ 
 with three maximal ideals $m_1, m_2, m_3.$ 
 Suppose that there exist three prime elements 
 $\pi_1, \pi_2, \pi_3 \in A$ such that 
 $m_1 = (\pi_2, \pi_3),$ $m_2= (\pi_1, \pi_3)$ and 
 $m_3 = (\pi_1, \pi_2).$ 
 Suppose that $\pi_i \notin m_i$ for all $i.$ 
 Let $n \geq 2$ be an integer coprime to char$(A/m_i)$ for all $i.$ 
 Let $F$ be the fraction field of $A.$ 
 For $1 \leq i \leq 3,$ let $\widehat{A}_{m_i}$ 
 be the completion of $A$ at $m_i,$ 
 $F_{m_i}$ be the fraction field $\widehat{A}_{m_i}$ 
 and $F_{\pi_j}$ be the completion of $F$ at the 
 discrete valuation given by $\pi_j.$ 
 Let $1 \leq i \neq j, k\leq 3.$ 
 Since $m_i = (\pi_j, \pi_k),$ $\widehat{A}_{m_i}$ 
 is a regular local ring with maximal ideal $(\pi_j, \pi_k).$ 
 In particular, $\pi_j$ gives a discrete valuation on 
 $F_{m_i}$ which extends the discrete valuation on $F$ given by $\pi_j.$ 
 Let $F_{m_i, \pi_j}$ be the completion of $F_{m_i}$ at the 
 discrete valuation given by $\pi_j.$ 
 Then $F_{\pi_j} \subset F_{m_i, \pi_j}.$ 
 Let $\displaystyle L = F(\sqrt[n]{\pi_1\pi_2}, \sqrt[n]{\pi_2\pi_3}).$ 
 Suppose that $F$ contains $\rho,$ 
 a primitive $n^2$-th root of unity. 
\end{notation}

\begin{cor} 
\label{m_i}
With notations as in (\ref{three_maximal_ideals}), 
we have $T_{L/F}(F_{m_i}) = RT_{L/F}(F_{m_i})<\rho>.$
\end{cor}

\begin{proof}
Since $\pi_2$ is a unit at $m_2,$ 
we have $m_2A_{m_2} = (\pi_1\pi_2, \pi_3\pi_2).$ 
Hence, by (\ref{R-trivial_times_rho}), we have 
$\displaystyle T_{L/F}(F_{m_2}) = RT_{L/F}(F_{m_2})<\rho>.$ 
Since $\pi_1$ is a unit at $m_1,$ $\displaystyle m_1A_{m_1} =
 (\pi_1\pi_2, \pi_1^{-1} \pi_3).$ 
 Since $\displaystyle L = F(\sqrt[n]{\pi_1\pi_2}, \sqrt[n]{\pi_2 \pi_3}) 
 = \displaystyle F(\sqrt[n]{\pi_1\pi_2}, \sqrt[n]{\pi_1^{-1} \pi_3}) ,$ 
 by (\ref{R-trivial_times_rho}), 
 we have $T_{L/F}(F_{m_1}) = RT_{L/F}(F_{m_1})<\rho>.$ 
 Similarly, $T_{L/F}(F_{m_3}) = RT_{L/F}(F_{m_3})<\rho>.$
\end{proof}

\begin{cor} 
\label{pi_i}
With notations as in (\ref{three_maximal_ideals}), 
we have $T_{L/F}(F_{\pi_i}) = RT_{L/F}(F_{\pi_i})<\rho>.$
\end{cor}

\begin{proof}
Let $\kappa(\nu_{\pi_i})$ be the residue field of $F_{\pi_i}.$ 
The discrete valuation $\nu_{\pi_i}$ of $F$ given 
by $\pi_i$ has unique extension $\tilde{\nu}_{\pi_i}$ to $L$. 
Since $F$ contains a primitive $n^{\rm th}$ root of unity, 
the residue field $\kappa(\tilde{\nu}_{\pi_i})$ of $L$ at 
$\tilde{\nu}_{\pi_i}$ is a cyclic extension of $\kappa(\pi)$ of degree $n.$
 In particular, 
 $T_{\kappa(\tilde{\nu}_{\pi_i})/\kappa(\nu_{\pi_i})}( \kappa(\nu_{\pi_i})) = 
RT_{\kappa(\tilde{\nu}_{\pi_i})/\kappa(\nu_{\pi_i})}( \kappa(\nu_{\pi_i})).$
 Hence, by (\ref{residue_field_assumption}), 
 $$T_{L/F}(F_{\pi_i}) = RT_{L/F}(F_{\pi_i})<\rho>.$$
\end{proof}

\begin{cor}
\label{rho_notin_r1}
 Let $F_{m_i, \pi_j}$ be as in (\ref{three_maximal_ideals}). 
 Then $\rho^t \in RT_{L/F}(F_{m_i, \pi_j}) $ if and only if $n$ divides $t.$ 
\end{cor}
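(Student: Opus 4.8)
The plan is to reduce the claim to Lemma \ref{rho_non-trivial} by rewriting $L\otimes_F F_{m_i,\pi_j}$ as a Kummer extension of the shape occurring there. Write $E=F_{m_i,\pi_j}$ and let $k$ be the third index, so that $i,j,k$ is a permutation of $1,2,3$ and $m_i=(\pi_j,\pi_k)$. Then $E$ is a complete discretely valued field with uniformizer $\pi_j$ and $\rho\in F\subseteq E$; its ring of integers $\mathcal{O}_E$ is the completion of $(\widehat A_{m_i})_{(\pi_j)}$, and its residue field $\kappa_E$ is the fraction field of the complete discrete valuation ring $\widehat A_{m_i}/(\pi_j)$, whose uniformizer is the image $\bar\pi_k$ of $\pi_k$ and whose residue field is $A/m_i$. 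Since char$(\kappa_E)$ is $0$ or equals char$(A/m_i)$, it is coprime to $n$. Moreover $\mu_n\subset\mu_{n^2}\subset E$ injects into $\kappa_E$, so $\kappa_E$ contains a primitive $n$-th root of unity; in $E$ the $\pi_l$ with $l\ne j$ are units and $v(\pi_j)=1$; and $\bar\pi_i$ is a unit of $\kappa_E$ because $\pi_i\notin m_i$.

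Now choose $\pi'$ and $u'$. If $j\ne 2$, exactly one of $\pi_1\pi_2,\pi_2\pi_3$ has $\pi_j$-valuation $1$ in $E$; take it to be $\pi'$ and take the other, a unit of $\mathcal{O}_E$, to be $u'$. If $j=2$, both have valuation $1$, so put $\pi'=\pi_1\pi_2$ and $u'=\pi_1\pi_2/(\pi_2\pi_3)=\pi_1/\pi_3\in\mathcal{O}_E^*$; then, for the choice $\sqrt[n]{u'}:=\sqrt[n]{\pi_1\pi_2}/\sqrt[n]{\pi_2\pi_3}$, one has $E(\sqrt[n]{\pi_1\pi_2},\sqrt[n]{\pi_2\pi_3})=E(\sqrt[n]{\pi'},\sqrt[n]{u'})$, and this identity is trivial when $j\ne 2$. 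A short case check using the description of $\kappa_E$ above shows that in every instance $\bar u'$ equals a unit of $\kappa_E$ times $\bar\pi_k^{\pm1}$, hence has $\bar\pi_k$-adic valuation $\pm1$; in particular $\bar u'\notin\kappa_E^{*p}$ for every prime $p\mid n$, so $\bar u'$ has order exactly $n$ in $\kappa_E^*/\kappa_E^{*n}$. By Hensel's lemma ($n$ coprime to char$(\kappa_E)$) the reduction map induces $\mathcal{O}_E^*/(\mathcal{O}_E^*)^n\cong\kappa_E^*/\kappa_E^{*n}$, so $u'$ has order $n$ in $E^*/E^{*n}$ and $E(\sqrt[n]{u'})/E$ is unramified of degree $n$, while $E(\sqrt[n]{\pi'})/E$ is totally (tamely) ramified of degree $n$. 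These two Galois extensions have intersection $E$, so $[E(\sqrt[n]{\pi'},\sqrt[n]{u'}):E]=n^2$; and since the multiplication map $L\otimes_F E\to E(\sqrt[n]{\pi'},\sqrt[n]{u'})$ is a surjection of $E$-algebras, $n^2=[E(\sqrt[n]{\pi'},\sqrt[n]{u'}):E]\le\dim_E(L\otimes_F E)=[L:F]\le n^2$. Hence $[L:F]=n^2$ and $L\otimes_F E\cong E(\sqrt[n]{u'},\sqrt[n]{\pi'})$, a field.

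It remains to apply Lemma \ref{rho_non-trivial} with its base field taken to be $E$, its ring of integers $\mathcal{O}_E$, its parameter $\pi'$, its unit $u'$ (for which $[E(\sqrt[n]{u'}):E]=n$), its Kummer extension $E(\sqrt[n]{u'},\sqrt[n]{\pi'})=L\otimes_F E$, and its primitive $n^2$-th root of unity $\rho$: the lemma gives $\rho^t\in RT_{(L\otimes_F E)/E}(E)$ if and only if $n\mid t$. Since $RT_{(L\otimes_F E)/E}(E)=RT_{L/F}(F_{m_i,\pi_j})$ by base change of the norm one torus $T_{L/F}$ along $F\subseteq F_{m_i,\pi_j}$, this is exactly the corollary. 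The only real work is the case check in the middle paragraph that puts $L\otimes_F F_{m_i,\pi_j}$ into the form $E(\sqrt[n]{u'},\sqrt[n]{\pi'})$ with $\bar u'$ of valuation $\pm1$, so that $L\otimes_F E$ is a field and Lemma \ref{rho_non-trivial} is applicable; after that the conclusion is formal.
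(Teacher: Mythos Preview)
Your proof is correct and follows the same route as the paper: identify $L\otimes_F F_{m_i,\pi_j}$ with an extension of the form $E(\sqrt[n]{u},\sqrt[n]{\pi})$ satisfying the hypotheses of Lemma~\ref{rho_non-trivial}, and then invoke that lemma. The paper's proof simply asserts that this identification holds (``it is easy to see that $L\otimes_F F_{m_i,\pi_j}\simeq F_{m_i,\pi_j}(\sqrt[n]{v\pi_j},\sqrt[n]{u})$ for some units $u$ and $v$ such that $[F_{m_i,\pi_j}(\sqrt[n]{u}):F_{m_i,\pi_j}]=n$''), whereas you have carried out the case analysis explicitly and verified via the two-step residue structure that $\bar u'$ has valuation $\pm1$ in $\kappa_E$, hence order $n$ in $E^*/E^{*n}$; your argument also confirms along the way that $[L:F]=n^2$.
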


\begin{proof}
 Since the residue field of $F_{m_i, \pi_j}$ is a complete discretely valued field 
 with the image of $\pi_k$ ($k \neq i, j$) as a parameter 
 and the image of $\pi_i$ as a unit, 
 it is easy to see that 
 $\displaystyle L\otimes_F F_{m_i, \pi_j } \simeq F_{m_i, \pi_j}(\sqrt[n]{v\pi_j}, \sqrt[n]{u})$ 
 for some units $u$ and $v$ such that 
 $\displaystyle [ F_{m_i, \pi_j}(\sqrt[n]{u}) : F_{m_i, \pi_j}] = n.$ 
 Thus, the corollary follows from (\ref{rho_non-trivial}). 
\end{proof}

 For each $1 \leq i \neq j \leq 3,$ we have inclusions fields 
 $F_{m_i} \to F_{m_i, \pi_j}$ and $F_{\pi_j} \to F_{m_i, \pi_j}.$ 
 Thus we have the induced homororphisms 
 $\alpha_{ij} : T_{L/F}(F_{m_i})/R \to 
 T_{L/F}(F_{m_i, \pi_j})/R$ and $\beta_{ji} : 
 T_{L/F}(F_{\pi_j})/R \to T_{L/F}(F_{m_i, \pi_j})/R.$
 
\begin{lemma}
\label{not-onto}
 The product map
 $$\displaystyle \phi : ( \prod_{i = 1}^3 T_{L/F}(F_{m_i}) /R) 
 \times (\prod_{j =1}^3 T_{L/F}(F_{\pi_j})/R) 
 \to \prod_{1 \leq i \neq j \leq 3} 
 (T_{L/F}(F_{m_i, \pi_j})/R)$$
 is not onto. 
\end{lemma}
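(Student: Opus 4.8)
The plan is to show that $\phi$ is not onto by exhibiting an element of the target that cannot lie in the image. By Corollaries \ref{m_i} and \ref{pi_i}, every group in the domain is of the form $RT_{L/F}(\,\cdot\,)\langle\rho\rangle$, so modulo $R$-equivalence each factor $T_{L/F}(F_{m_i})/R$ and $T_{L/F}(F_{\pi_j})/R$ is generated by the class of $\rho$; hence the image of $\phi$ consists precisely of those tuples in $\prod_{i\neq j}(T_{L/F}(F_{m_i,\pi_j})/R)$ which, componentwise, are classes of powers of $\rho$ and which are ``consistent'' in the sense that the $(i,j)$-component is $\alpha_{ij}(\rho^{a_i}) + \beta_{ji}(\rho^{b_j})$ for a single collection of exponents $a_1,a_2,a_3,b_1,b_2,b_3$. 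So the first step is to pin down the image of $\langle\rho\rangle$ in each local factor $T_{L/F}(F_{m_i,\pi_j})/R$: by Corollary \ref{rho_notin_r1}, the order of the class of $\rho$ in $T_{L/F}(F_{m_i,\pi_j})/R$ is exactly $n$, so each such factor contains a cyclic subgroup $\langle\bar\rho\rangle\cong\Z/n\Z$ onto which both $\alpha_{ij}$ and $\beta_{ji}$ land.

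The second step is to choose a target tuple that forces an inconsistent system of congruences. Take the element $(\xi_{ij})$ whose $(1,2)$, $(2,3)$, $(3,1)$ components are the class of $\rho$ and whose other three components (the $(2,1)$, $(3,2)$, $(1,3)$ components) are trivial; this tuple does lie in the subgroup generated by the images of $\langle\rho\rangle$, so it is a legitimate candidate. If $(\xi_{ij})$ were in the image of $\phi$, there would exist integers $a_1,a_2,a_3$ and $b_1,b_2,b_3$ with
$$
a_i + b_j \equiv \begin{cases} 1 \pmod n & (i,j)\in\{(1,2),(2,3),(3,1)\}\\ 0 \pmod n & (i,j)\in\{(2,1),(3,2),(1,3)\}\end{cases}
$$
(using that $\alpha_{ij}$ and $\beta_{ji}$ each send $\rho$ to $\bar\rho$, and that $\bar\rho$ has order exactly $n$ by Corollary \ref{rho_notin_r1}). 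Summing the three ``$\equiv 1$'' congruences gives $(a_1+a_2+a_3)+(b_1+b_2+b_3)\equiv 3\pmod n$, while summing the three ``$\equiv 0$'' congruences gives $(a_1+a_2+a_3)+(b_1+b_2+b_3)\equiv 0\pmod n$. Hence $3\equiv 0\pmod n$, which is false for any $n\geq 2$ with $n\neq 3$; and one checks (by a direct look at the case $n=3$, or by choosing instead the tuple supported on a single branch, e.g. only the $(1,2)$-component equal to the class of $\rho$, which by the same argument forces $a_1+b_2\equiv 1$ while all five other branches force $a_i+b_j\equiv 0$, and then the cycle-sum argument over an appropriate $4$-cycle in the bipartite ``branch graph'' on $\{m_1,m_2,m_3\}\sqcup\{\pi_1,\pi_2,\pi_3\}$ yields a contradiction since that graph contains a cycle) that $\phi$ is not onto for every admissible $n$.

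The main obstacle I anticipate is not the congruence bookkeeping but justifying the precise description of the image of $\phi$ in the quotients $T_{L/F}(F_{m_i,\pi_j})/R$: one must be sure that the only relations among the classes of $\rho$ coming from the various local fields are the ones recorded above, i.e. that $\alpha_{ij}$ and $\beta_{ji}$ really do kill nothing beyond $\langle\rho^n\rangle$, which is exactly the content of Corollary \ref{rho_notin_r1}, and that there are no further identifications in $T_{L/F}(F_{m_i,\pi_j})/R$ that could ``accidentally'' realize the chosen tuple. Once one observes that it suffices to work inside the subquotient generated by $\bar\rho$ in each branch factor --- equivalently, to compare with the patching complex of the constant sheaf $\Z/n\Z$ attached to $\langle\rho\rangle\subset T_{L/F}$, whose first cohomology is governed by $H^1$ of the bipartite branch graph, which is nonzero because that graph (a $6$-cycle, or $K_{3,3}$ minus a perfect matching) is not a tree --- the non-surjectivity of $\phi$ is forced. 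I would phrase the write-up so that the contradiction is extracted from a single explicit target tuple, invoking Corollaries \ref{m_i}, \ref{pi_i} and \ref{rho_notin_r1} for the three inputs and leaving the elementary congruence computation to the reader.
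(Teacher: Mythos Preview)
Your approach is essentially the paper's: use Corollaries \ref{m_i}, \ref{pi_i} and \ref{rho_notin_r1} to reduce the question to a system of congruences $a_i+b_j\equiv c_{ij}\pmod n$ for exponents of $\rho$, and then exhibit an explicit target tuple that makes the system inconsistent. The paper skips your first (three-branch) tuple and goes straight to your fallback choice, the tuple supported only on the $(1,2)$-branch, deriving $1\equiv 0\pmod n$ by successive substitution: from $a_1+b_3\equiv 0$ and $a_2+b_3\equiv 0$ one gets $a_1\equiv a_2$; from $a_2+b_1\equiv 0$ and $a_3+b_1\equiv 0$ one gets $a_2\equiv a_3$; then $a_3+b_2\equiv 0$ forces $a_1+b_2\equiv 0$, contradicting $a_1+b_2\equiv 1$. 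This works uniformly for every $n\geq 2$, so there is no need for the three-branch tuple or a separate treatment of $n=3$.

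One slip to fix: you correctly identify the branch graph as a $6$-cycle ($K_{3,3}$ minus a perfect matching), but then speak of a ``$4$-cycle'' for the cycle-sum contradiction. A hexagon contains no $4$-cycles; the alternating sum must be taken over the full $6$-cycle $m_1\!-\!\pi_2\!-\!m_3\!-\!\pi_1\!-\!m_2\!-\!\pi_3\!-\!m_1$, which for the single-branch tuple indeed gives $1\equiv 0\pmod n$. With that correction (and dropping the unnecessary first tuple), your write-up is the paper's proof.
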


\begin{proof} 
 Let $y_{12} = \rho \in T_{L/F}(F_{m_1, \pi_2})$ 
 and $y_{ij} = 1 \in T_{L/F}(F_{m_i, \pi_j})$ 
 for all $i \neq j$ and $(i, j) \neq (1, 2).$ 
 Then we show that 
 $\displaystyle y = (y_{ij}) \in \prod_{1 \leq i \neq j \leq 3} 
 (T_{L/F}(F_{m_i, \pi_j})/R)$
  is not in the image of $\phi.$ 
 
 Suppose $y$ is in the image of $\phi.$ 
 Then there exist $a_i \in T_{L/F}(F_{m_i})$ and 
 $b_j\ \in T_{L/F}(F_{\pi_j})$ such that 
 $\phi(a_1, a_2, a_3, b_1, b_2, b_3) = y$ 
 modulo $R$-trivial elements. 
 Then we have 
 $\alpha_{12}(a_1) \beta_{21}(b_2) = y_{12} = \rho$ 
 modulo $R$-trivial elements and 
 $\alpha_{ij}(a_i) \beta_{ji}(b_j) \in RT_{L/F}(F_{m_i, \pi_j})$ 
 for all $i \neq j$ and $(i, j) \neq (1, 2).$ 
 By (\ref{m_i}) and (\ref{pi_i}), 
 we have $a_i = c_i \rho^{s_i}$ for some 
 $c_i \in RT_{L/F}(F_{m_i})$ and 
 $b_j = d_j \rho^{t_j}$ for some $d_i \in RT_{L/F}(F_{\pi_j}).$ 
 Hence $a_i = \rho^{s_i}$ and 
 $b_j = \rho^{t_j}$ modulo $R$-trivial elements. 
% Since $\alpha_{ij}$ and $\beta_{ji}$ map $R$-trivial elements to $R$-trivial elements, 
%$\alpha_{ij}(c_i) \in RR^1_{LF}(F_{m_i,\pi_j})$ and $\beta_{ji}(d_j) \in RR^1_{L/F}(F_{m_i, \pi_j}).$ 
Since $\rho \in F,$ $\alpha_{ij}(\rho) = \rho$ and 
$\beta_{ji}(\rho) = \rho$ for all $i \neq j.$ 
We have $\rho = y_{12} = \alpha_{12}(a_1) \beta_{21}(b_2) = 
\rho^{s_1 + t_2}$ modulo $R$-trivial elements. 
Hence, by (\ref{rho_notin_r1}), $n$ divides $1 - s_1 - t_2.$

 Let $1 \leq i \neq j \leq 3$ with $(i, j) \neq (1, 2).$ 
 Then $ 1 = \alpha_{ij}(a_i) \beta_{ji}(b_j) = \rho^{s_i + t_j}$ 
 modulo $R$-trivial elements. 
 Hence $\rho^{s_i + t_j } \in RT_{L/F}(F_{m_i, \pi_j})$ 
 and by (\ref{rho_notin_r1}), $n$ divides $s_i + t_j.$ 
 Since $n$ divides $s_2 +t_1$ and 
 $s_3 + t_1,$ $n$ divides $s_3 - s_2.$ 
 Since $n$ divides $s_1 + t_3$ and 
 $s_2 + t_3,$ $n$ divides $s_1 - s_2.$ 
 Hence $n$ divides $s_1 - s_3.$ 
 Since $n$ divides $s_3 + t_2,$ $n$ divides $s_1 + t_2,$ 
 which contradicts the fact that $n$ divides $1 - s_1 - t_2.$ 
\end{proof}

\begin{theorem} 
\label{lgpfails} 
Let $K$ be a complete discretely valued field 
with residue field $\kappa$ and ring of integers $R.$ 
Let $\XX$ be a regular integral surface proper over $R$ 
and $F$ be its fraction field. 
Let $X$ denote the reduced special fibre of $\XX$. 
Suppose that $X$ is a union of regular curves with normal crossings. 
Suppose that there exist three three irreducible curves 
$X_1,$ $X_2$ and $X_3$ regular on $\XX$ such that 
$X_i \cap X_j,$ $i \neq j$ has exactly one closed point. 
Let $n \geq 2$ be an integer coprime to char$(\kappa).$ 
Suppose that $K$ has a primitive $n^2$-th root of unity. 
Then there exists a Galois extension $L/F$ of 
degree $n^2$ with Galois group isomorphic to 
$\Z/n\Z \times \Z/n\Z$ such that 
the local-global principle fails for $T_{L/F}$. 
\end{theorem}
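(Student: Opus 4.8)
The plan is to realize the configuration of Notation~\ref{three_maximal_ideals} inside $\XX$ and then convert the non-surjectivity of Lemma~\ref{not-onto} into the non-vanishing of $\Sh_{\PP}(F,T_{L/F})$ for a suitable $\PP$. First I would set $P_1=X_2\cap X_3$, $P_2=X_1\cap X_3$ and $P_3=X_1\cap X_2$; since $X$ has normal crossings these three closed points are distinct and $P_i\notin X_i$. Let $A$ be the semilocalization of $\mathcal O_{\XX}$ at $\{P_1,P_2,P_3\}$, a regular semilocal ring of dimension $2$ with maximal ideals $\mathfrak m_1,\mathfrak m_2,\mathfrak m_3$. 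As $\XX$ is regular, each (reduced, irreducible) component $X_i$ is a Cartier divisor, so its ideal in $A$ is principal, say $(\pi_i)$; then $\pi_i$ is a prime element with $\pi_i\notin\mathfrak m_i$, and since $X_j$ and $X_k$ cross transversally at $P_i$, comparing localizations at the three maximal ideals gives $\mathfrak m_1=(\pi_2,\pi_3)$, $\mathfrak m_2=(\pi_1,\pi_3)$, $\mathfrak m_3=(\pi_1,\pi_2)$. Since $n$ is coprime to $\mathrm{char}(\kappa)=\mathrm{char}(\kappa(P_i))$ and $K\subseteq F$ contains a primitive $n^2$-th root of unity $\rho$, the data $(A;\pi_1,\pi_2,\pi_3;n)$ fits Notation~\ref{three_maximal_ideals}. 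Put $L=F(\sqrt[n]{\pi_1\pi_2},\sqrt[n]{\pi_2\pi_3})$: the divisorial valuations $v_{X_1},v_{X_2},v_{X_3}$ of $F$ show that $\pi_1\pi_2$ and $\pi_2\pi_3$ generate a subgroup of $F^{*}/F^{*n}$ isomorphic to $\Z/n\Z\times\Z/n\Z$ (for instance $v_{X_1}(\pi_1\pi_2)=v_{X_3}(\pi_2\pi_3)=1$ while $v_{X_3}(\pi_1\pi_2)=v_{X_1}(\pi_2\pi_3)=0$), so by Kummer theory $L/F$ is Galois of degree $n^2$ with $\mathrm{Gal}(L/F)\cong\Z/n\Z\times\Z/n\Z$.

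Next I would take $\PP$ to be the set of closed points of $X$ where distinct irreducible components meet; it is non-empty (it contains $P_1,P_2,P_3$) and contains the singular locus of $X$. Since each $X_i$ is irreducible and meets the other components of $X$ only inside $\PP$, the set $U_i:=X_i\setminus\PP$ is one of the connected components of $X\setminus\PP$, and for each $i$ the two branches at $P_i$ are $(P_i,U_j)$ and $(P_i,U_k)$, where $\{i,j,k\}=\{1,2,3\}$. Under the canonical identifications $F_{P_i}=F_{\mathfrak m_i}$, $F_{U_j}\subseteq F_{\eta_j}=F_{\pi_j}$ ($\eta_j$ the generic point of $X_j$) and $F_{P_i,U_j}=F_{P_i,\eta_j}=F_{\mathfrak m_i,\pi_j}$, these six branch fields are exactly the fields $F_{\mathfrak m_i,\pi_j}$, $i\neq j$, appearing in Lemma~\ref{not-onto}. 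By \cite[Corollary~3.6.]{HHK1}, and because $T_{L/F}$ is commutative, $\Sh_{\PP}(F,T_{L/F})$ is the cokernel of the product map $\Psi\colon\prod_{U\in\mathcal U}T_{L/F}(F_U)\times\prod_{P\in\PP}T_{L/F}(F_P)\to\prod_b T_{L/F}(F_b)$, the last product over all branches $b$.

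Now I would consider $\xi=(\xi_b)_b$ with $\xi_{(P_1,U_2)}=\rho$ and $\xi_b=1$ on every other branch; this is a legitimate element of the target of $\Psi$, since $N_{L/F}(\rho)=\rho^{n^2}=1$ puts $\rho\in T_{L/F}(F)$. The claim is that $\xi$ is not in the image of $\Psi$, which forces $\Sh_{\PP}(F,T_{L/F})\neq 0$. Indeed, if $\xi=\Psi\big((a_U)_U,(a_P)_P\big)$, then restricting this identity to the six branches $(P_i,U_j)$ above and reducing modulo $R$-equivalence, the classes of $a_{P_1},a_{P_2},a_{P_3}$ in $T_{L/F}(F_{\mathfrak m_i})/R$ together with the classes in $T_{L/F}(F_{\pi_j})/R$ of the images of $a_{U_1},a_{U_2},a_{U_3}$ (these images factor through $F_{\pi_j}$ by the inclusions above) give a preimage under the map $\phi$ of Lemma~\ref{not-onto} of the element of $\prod_{i\neq j}T_{L/F}(F_{\mathfrak m_i,\pi_j})/R$ that is $\rho$ on the $(1,2)$-component and trivial elsewhere, contradicting Lemma~\ref{not-onto}. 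Finally, $\Sh_{\PP}(F,T_{L/F})\subseteq\Sh_X(F,T_{L/F})\subseteq\Sh(F,T_{L/F})$ by \cite[Corollary~5.9.]{HHK1} and \cite[Proposition~8.2.]{HHK1}, so $\Sh(F,T_{L/F})\neq 0$; equivalently, some $\lambda\in F^{*}$ is a norm from $L\otimes_F F_\nu/F_\nu$ for every $\nu\in\Omega_F$ but not a norm from $L/F$, and the local-global principle for $T_{L/F}$-torsors fails.

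The step I expect to be the main obstacle is the bookkeeping in the last two paragraphs: matching the patching fields $F_U$, $F_P$, $F_{P,U}$ attached to the model with the abstract fields $F_{\pi_j}$, $F_{\mathfrak m_i}$, $F_{\mathfrak m_i,\pi_j}$ of Notation~\ref{three_maximal_ideals}; checking that the relevant field inclusions factor through the completions along the curves $X_j$, so that reduction modulo $R$-equivalence is compatible with $\phi$; and using the choice $\PP=\{\text{points where components of }X\text{ meet}\}$ to ensure that each $X_i$ contributes a single component $U_i$ adjacent to both of the points $P_j,P_k$. The arithmetic core, namely that $\phi$ is not onto, is already isolated in Lemma~\ref{not-onto}, leaving only this translation between the abstract and geometric pictures.
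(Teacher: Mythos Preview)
Your proposal is correct and follows essentially the same route as the paper: set up the semilocal ring $A$ at the three crossing points to match Notation~\ref{three_maximal_ideals}, take $L=F(\sqrt[n]{\pi_1\pi_2},\sqrt[n]{\pi_2\pi_3})$, and deduce non-surjectivity of the patching product map from Lemma~\ref{not-onto} via the factorizations $F_{U_j}\subseteq F_{\pi_j}$ and $F_{P_i,U_j}=F_{\mathfrak m_i,\pi_j}$. Your bookkeeping is in fact slightly more explicit than the paper's (you verify $[L:F]=n^2$ via the valuations $v_{X_i}$, and your choice of $\PP$ as the set of all crossing points of $X$ cleanly accommodates possible additional components of $X$), but the argument is the same.
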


\begin{proof} 
Let $P_1,$ $P_2$ and $P_3$ be the points of $X_i \cap X_j,$ $i \neq j.$ 
Let $A$ be the semi local ring at $P_1,$ $P_2$ and $P_3$ on $\XX.$ 
Then $A$ has three maximal ideals $m_1, m_2$ and $m_3.$ 
Since $\XX$ is regular and each $X_i$ is regular on 
$\XX,$ there exist primes $\pi_1, \pi_2, \pi_3 \in A$ such that 
$m_i = (\pi_j, \pi_k)$ for all distinct $ i, j , k.$ 
Let $\displaystyle L = F(\sqrt[n]{\pi_1\pi_2}, \sqrt[n]{\pi_2\pi_3}).$ 
Since $K$ contains primitive $n^{\rm th}$ root of unity, 
$L/F$ is a Galois extension with Galois group isomorphic to $\Z/n\Z \times \Z/n\Z.$ 
We claim that the local-global principle fails for $T_{L/F}$.

Let $\PP$ be a finite set of closed points of $X$ containing all the singular points of $X.$
 Let $X \setminus \PP = \cup U_i,$ with $U_i \subset X_i$ 
 for $i = 1, 2, 3.$ 
 By (\cite[Corollary 3.6.]{HHK1}), it is enough to show that 
 the product map $$\displaystyle  \prod_{P \in \PP} T_{L/F}(F_{P}) 
 \times \prod_i T_{L/F}(F_{U_i}) \to 
 \prod_{P, U_i } T_{L/F}(F_{P, U_i})$$ is not onto. 
 Since $X_1, X_2, X_3$ are the only curves in $X$ 
 passing through $P_1, P_2$ or $P_3,$ it is enough to show that 
 $$\displaystyle \phi : \prod_{i = 1}^3 T_{L/F}(F_{P_i}) 
 \times \prod_{j = 1}^3 T_{L/F}(F_{U_j}) 
 \to \prod_{P_i, U_j} T_{L/F}(F_{P_i, U_j})$$ 
 is not onto. 
 Since $F_{U_j} \subset F_{\pi_j}$ and 
 $F_{P_i, U_j} = F_{P_i, \pi_j},$ $\phi$ 
 factors as 
$$\displaystyle \prod_{i = 1}^3 T_{L/F}(F_{P_i}) 
\times \prod_{j = 1}^3 T_{L/F}(F_{U_j}) \to 
\prod_{i = 1}^3 T_{L/F}(F_{P_i}) 
\times \prod_{j = 1}^3 T_{L/F}(F_{\pi_j}) \to 
 \prod_{P_i, \pi_j} T_{L/F}(F_{P_i, \pi_j}).$$ 
 Since, by (\ref{not-onto}), 
 $$\displaystyle  \prod_{i = 1}^3 (T_{L/F}(F_{\pi_i})/R) 
 \times \prod_{j = 1}^3 (T_{L/F}(F_{P_j})/R) 
 \to \prod_{U_i, P_j} (T_{L/F}(F_{P_i, U_j})/R)$$
is not onto, $\phi$ is not onto. 
\end{proof}

\begin{remark} 
The above theorem for $\kappa$ algebraically closed and 
$n = 2$ is proved by 
Colliot-Th\'el\`ene, Parimala and Suresh 
(\cite[Section 3.1. \& Corollary 6.2.]{CTPS1}). 
\end{remark}

\begin{cor}
\label{norm1example}
Let $K$ be a complete discretely valued field with 
residue field $\kappa$ and ring of integers $R.$ 
Let $t \in R$ be a parameter. 
Let $\XX = \text{Proj}\left( R [x, y, z] /\langle xy(x+y-z) - tz^3\rangle \right).$
Let $X$ be the special fibre of $\XX.$ 
Then $X = \text{Proj} \left( \kappa[x, y, z]/\langle xy(x+y - z)\rangle \right)$ which is reduced. 
Then $X$ has three irreducible components $X_1,$ $X_2,$ $X_3$ and 
$X_i$ intersects $X_j,$ $i \neq j$ at exactly one point. 
Let $F$ be the function field of $\XX.$ 
Then $F \simeq K(x)[ y]/ \langle xy(x+y-1)-t\rangle.$ 
Let $n \geq 2$ be coprime to char$(\kappa).$ 
Suppose that $K$ contains a primitive $n^2$-th root of unity. 
Let $\displaystyle L = F(\sqrt[n]{xy}, \sqrt[n]{y(x-1)}).$ 
Then $L/F$ is a Galois extension with Galois group 
isomorphic to $\Z/n\Z \times \Z/n\Z.$ 
By (\ref{lgpfails}), the local-global principle for 
fails for $T_{L/F}$. 
\end{cor}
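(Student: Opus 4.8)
The plan is to recognize $\XX$ as an instance of the arithmetic surface of Theorem~\ref{lgpfails} and to identify $L$ with the extension produced in that theorem's proof. First I would check that $\XX = \operatorname{Proj} R[x,y,z]/(f)$, with $f = xy(x+y-z) - tz^{3}$, is a regular integral scheme, projective and flat over $R$. Flatness is immediate because $\bar f = xy(x+y-z) \neq 0$ in $\kappa[x,y,z]$, and projectivity is built in. For regularity: in the chart $z=1$, $\XX$ is $\operatorname{Spec} A$ with $A = R[x,y]/(xy(x+y-1)-t)$, and since $t$ is a uniformizer of $R$ the element $xy(x+y-1)-t$ extends to a regular system of parameters of $\mathbb{A}^{2}_{R}$ at every point (its $-t$-term is transverse to the $x,y$-directions in $\mathfrak m_P/\mathfrak m_P^{2}$), so $\XX$ is regular at every point with $z \neq 0$; the remaining points $[1{:}0{:}0]$, $[0{:}1{:}0]$, $[1{:}{-}1{:}0]$ lie in the chart $x=1$ or $y=1$, where dehomogenizing $f$ to $y(1+y-z)-tz^{3}$ (resp. $x(x+1-z)-tz^{3}$) and a one-line computation show that the linear part is nonzero at each. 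Integrality follows once $f$ is irreducible over $K$, equivalently once $xy^{2}+x(x-1)y-t$ has no root in $K(x)$: any such root $y_{0}$ would satisfy $xy_{0}(y_{0}+x-1)=t$, and comparing valuations at the place $x=0$ of $K(x)$ (where $v(t)=0$) gives a parity contradiction; hence $f$ is irreducible in $R[x,y,z]$ by Gauss's lemma.

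Next I would describe the special fibre. Reducing mod $t$ gives $X = \operatorname{Proj}\kappa[x,y,z]/\bigl(xy(x+y-z)\bigr)$; since $xy(x+y-z)$ is a product of three pairwise non-proportional linear forms it is squarefree, so $X$ is reduced with irreducible components $X_{1}=V(x)$, $X_{2}=V(y)$, $X_{3}=V(x+y-z)$, each $\cong \mathbb{P}^{1}_{\kappa}$ and hence regular. Two distinct lines of $\mathbb{P}^{2}_{\kappa}$ meet transversally in one point, and the three lines here are not concurrent, so $X$ has only normal crossings and the pairwise intersections are the single closed points $P_{12}=[0{:}0{:}1]$, $P_{13}=[0{:}1{:}1]$, $P_{23}=[1{:}0{:}1]$, all distinct and regular on $\XX$ by the previous paragraph. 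Thus $\XX$, together with $X_{1},X_{2},X_{3}$, satisfies every hypothesis of Theorem~\ref{lgpfails}.

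It then remains to pin down $F$ and $L$. From the chart $z=1$ one has $F=\operatorname{Frac}(A)$ and the relation $xy(x+y-1)=t$ in $F^{*}$. Near the three nodes the components $X_{1},X_{2},X_{3}$ are cut out by the primes $\pi_{1}=x$, $\pi_{2}=y$, $\pi_{3}=x+y-1$, and on the semilocal ring of $\{P_{12},P_{13},P_{23}\}$ on $\XX$ one checks that the three maximal ideals are $m_{3}=(\pi_{1},\pi_{2})$ at $P_{12}$, $m_{2}=(\pi_{1},\pi_{3})$ at $P_{13}$ and $m_{1}=(\pi_{2},\pi_{3})$ at $P_{23}$, with $\pi_{i}\notin m_{i}$ --- exactly the configuration of Notation~\ref{three_maximal_ideals}. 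Since $n$ is coprime to $\operatorname{char}(\kappa)$ and $K$ contains $\zeta_{n^{2}}$ (hence $\zeta_{n}$), Theorem~\ref{lgpfails} applies with these data, and the extension it produces is $L = F\bigl(\sqrt[n]{\pi_{1}\pi_{2}},\,\sqrt[n]{\pi_{2}\pi_{3}}\bigr)=F\bigl(\sqrt[n]{xy},\,\sqrt[n]{y(x+y-1)}\bigr)$, the extension appearing in the statement (in $F$ one has $y(x+y-1)=t/x$). It is Galois with group $\Z/n\Z\times\Z/n\Z$: being obtained by adjoining two $n$-th roots over a field containing $\zeta_{n}$ it is abelian of exponent dividing $n$, and the divisorial valuations $v_{X_{1}},v_{X_{2}}$ send the pair $\bigl(xy,\,y(x+y-1)\bigr)$ to the rows of the unimodular matrix $\left(\begin{smallmatrix}1&0\\ 1&1\end{smallmatrix}\right)$, so their classes are independent in $F^{*}/F^{*n}$; an abelian group that is $2$-generated of order $n^{2}$ and exponent $n$ is $\Z/n\Z\times\Z/n\Z$. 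By Theorem~\ref{lgpfails}, the local--global principle for $T_{L/F}$-torsors fails, which is the assertion of the corollary.

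The only genuinely computational parts --- and the steps most likely to conceal a slip --- are the regularity check at the three points of $\XX$ lying over $z=0$ and the verification that $x$, $y$, $x+y-1$ generate the maximal ideals of the semilocal ring in the precise pattern demanded by Notation~\ref{three_maximal_ideals}; once these are settled, the corollary is a direct application of Theorem~\ref{lgpfails}.
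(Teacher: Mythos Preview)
Your proposal is correct and follows essentially the same route as the paper: identify the affine chart $U=\operatorname{Spec} R[x,y]/(xy(x+y-1)-t)$, take $\pi_{1}=x$, $\pi_{2}=y$, $\pi_{3}=x+y-1$, check that the three nodal points $(1,0),(0,1),(0,0)$ have maximal ideals $(\pi_{2},\pi_{3}),(\pi_{1},\pi_{3}),(\pi_{1},\pi_{2})$, and invoke Theorem~\ref{lgpfails}. The paper's own proof records only the choice of points and the generators of the maximal ideals; you additionally verify regularity and integrality of $\XX$, the normal-crossings structure of $X$, and the Galois group computation --- details the paper takes for granted but which are exactly the hypotheses of Theorem~\ref{lgpfails}, so the extra care is appropriate rather than a different argument.
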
 

\begin{proof} 
Let $U =$ Spec$R[x, y]/(xy(x+ y -1) - t).$ 
Then $U$ is an affine open subset of $\XX.$ 
Let $P_1 = (1, 0),$ $P_2 = (0, 1)$ and $P_3 = (0, 0)$ 
be the three closed points of $U.$ 
Let $A$ be the semi local ring at $P_1,$ $P_2$ and $P_3$ 
and let $m_i$ be the maximal ideal of $A$ corresponding to $P_i.$ 
Then $m_1 = (x + y -1, y),$ $m_2 = (x , x+ y -1)$ and $m_3 = (x, y).$ 
Hence, by (\ref{lgpfails}), the local-global principle fails for 
$T_{L/F}$.
\end{proof}

\begin{cor} 
Let $K$ be a complete discretely valued field with residue field $\kappa$ 
and ring of integers $R.$ Let $t \in R$ be a parameter. 
Let $\XX = $ Proj $\left( R [x, y, z] / \langle xy(x+y-z)(x-2z) - tz^4 \rangle \right)$ 
and $F$ be the function field of $\XX.$ 
Then $F \simeq K(x)[y]/(xy(x+y- 1)(x-2)-t).$ 
Let $\theta_1 = (x - 2 )/(x - 2 + xy(x +y -1))$
and $\theta_2 = (y -2) / (y - 2 + xy(x + y-1)).$ 
Let $n \geq 2$ with $6n$ coprime to char$(\kappa).$ 
Let $\displaystyle L_1 = F(\sqrt[n]{xy}, \sqrt[n]{y(x+y -1)})$ and 
$\displaystyle L_2 = F(\sqrt[n]{xy \theta_1 }, \sqrt[n]{y(x + y-1)\theta_2}).$ 
Then $L_1$ and $L_2$ are Galois extensions of $F$ that are 
linearly disjoint and the local-global principle fails for 
$T_{L_1 \times L_2/F}.$
\end{cor}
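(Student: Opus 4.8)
The plan is to transfer the patching obstruction of Theorem~\ref{lgpfails} to the multinorm torus $T=T_{L_1\times L_2/F}$, using that $\theta_1,\theta_2$ were chosen so that $L_2$ becomes equal to $L_1$ after completion along every component of the special fibre through the ``triangle'' $X_1\cup X_2\cup X_3$, where $X_1=\{x=0\}$, $X_2=\{y=0\}$, $X_3=\{x+y-1=0\}$. First I would record the geometry: the reduced special fibre of the given $\XX$ is a union of four lines in general position, and $X_1,X_2,X_3$ meet pairwise in the three closed points $P_{12}=(0,0)$, $P_{13}=(0,1)$, $P_{23}=(1,0)$, each of which is a regular point of $\XX$ with maximal ideal $(\pi_i,\pi_j)$ for $\pi_1=x$, $\pi_2=y$, $\pi_3=x+y-1$. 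By a sequence of blow-ups I pass to a regular proper model $\XX'$ of $F$ whose reduced special fibre is a union of regular curves with normal crossings, along which $\mathrm{div}(\theta_1)$, $\mathrm{div}(\theta_2)$ and the ramification loci of $L_1/F$ and $L_2/F$ also meet in normal crossings; this in particular resolves the singularity of the given $\XX$ at the point at infinity on $X_1\cap X_4$, $X_4=\{x-2=0\}$, and may be taken to be an isomorphism over a neighbourhood of $P_{12},P_{13},P_{23}$, so that the triangle configuration survives on $\XX'$.

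Next I would check that $L_1/F$ and $L_2/F$ are Galois with group $\Z/n\Z\times\Z/n\Z$ and that $L_1\cap L_2=F$. Since $K$, hence $F$, contains a primitive $n^2$-th root of unity $\rho$, this is Kummer theory: for the Galois statements one checks that $\{xy,\,y(x+y-1)\}$ and $\{xy\theta_1,\,y(x+y-1)\theta_2\}$ are each independent of order $n$ in $F^*/F^{*n}$, using the valuations along $X_1$ and $X_3$ (along which $\theta_1$ and $\theta_2$ restrict to $1$); and for linear disjointness one checks that $xy,\,y(x+y-1),\,\theta_1,\,\theta_2$ span a subgroup of order $n^4$, the matrix of their valuations along $X_1$, $X_3$, $X_4$ and a divisorial valuation lying over $\{y=2\}$ being the identity modulo $n$.

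For the failure of the local--global principle I would use $H^1(F,T)\cong F^*/\bigl(N_{L_1/F}(L_1^*)\cdot N_{L_2/F}(L_2^*)\bigr)$, coming from $1\to T\to R_{L_1/F}(\mathbb{G}_m)\times R_{L_2/F}(\mathbb{G}_m)\to\mathbb{G}_m\to 1$, and argue as in Theorem~\ref{lgpfails}: by \cite[Corollary 3.6.]{HHK1} applied to $\XX'$ it suffices to exhibit a finite set $\PP$ of closed points (containing the nodes of the special fibre together with $P_{12},P_{13},P_{23}$) for which the patching map $\psi\colon\prod_{P\in\PP}T(F_P)\times\prod_i T(F_{U_i})\to\prod_{(P,U_i)}T(F_{P,U_i})$ is not surjective. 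The key point: in every local field $F_P$, $F_{U_i}$, $F_{P,U_i}$ attached to a point or component of the triangle, $\theta_1$ and $\theta_2$ are units congruent to $1$ modulo the uniformizer, hence $n$-th powers by Hensel's lemma (as $n$ is coprime to the residue characteristic), so $L_2$ and $L_1$ become equal over all those fields and Proposition~\ref{product_of_field} identifies the part of $\psi$ indexed by the six branches at $P_{12},P_{13},P_{23}$, modulo $R$-equivalence, with the corresponding map for $T_{L_1/F}$. As $L_1=F(\sqrt[n]{\pi_1\pi_2},\sqrt[n]{\pi_2\pi_3})$ with the above $\pi_i$ is exactly the extension of \ref{three_maximal_ideals}, Corollaries~\ref{m_i}, \ref{pi_i} and \ref{rho_notin_r1} and Lemma~\ref{not-onto} apply verbatim; choosing the branch family that is trivial on every branch except the one at $P_{23}=X_2\cap X_3$ along $X_2$, where it is $(\rho,1)\in T(F_{P_{23},U_2})$ (which maps to $\rho$ under the identification with $T_{L_1/F}$), the computation in the proof of \ref{not-onto} shows this family is not in the image of $\psi$. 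Hence $\Sh_{\PP}(F,T)\neq 0$, so $\Sh(F,T)\neq 0$ by \cite[Proposition 8.2.]{HHK1}, i.e.\ the local--global principle for $T_{L_1\times L_2/F}$-torsors fails.

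The main obstacle I expect is the bookkeeping that makes the reduction to \ref{not-onto} rigorous: one must choose $\XX'$ so that both the triangle $P_{12},P_{13},P_{23}$ and the $n$-th-power statements for $\theta_1,\theta_2$ survive; note that the extra component $X_4$ and the exceptional curves of the resolution contribute further branches, but the chosen branch family is trivial on all of them, so they impose no extra condition; and verify carefully that $L_2$ really does become equal to $L_1$ over every local field in play near the triangle. This last point is exactly where the precise shape $\theta_i=(\text{linear form})/\bigl((\text{linear form})+xy(x+y-1)\bigr)$ is used: the relation $xy(x+y-1)(x-2)=t$ gives $xy(x+y-1)=t/(x-2)$, so $\theta_i\equiv 1\pmod t$ wherever $x-2$, respectively $y-2$, is a unit, i.e.\ near $X_1,X_2,X_3$. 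The remaining ingredients --- Kummer theory, resolution of surface singularities, and the identifications above --- are routine or already in the previous section.
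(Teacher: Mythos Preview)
Your proposal is correct and follows the same line as the paper: both arguments observe that $\theta_1,\theta_2$ become $n$-th powers in every local field attached to the triangle $X_1\cup X_2\cup X_3$, so that $L_1$ and $L_2$ coincide there, then invoke Proposition~\ref{product_of_field} to identify $T_{L_1\times L_2/F}/R$ with $T_{L_1/F}/R$ over those fields and conclude non-surjectivity of the patching map via Lemma~\ref{not-onto}. The one genuine difference is in the proof of linear disjointness: rather than your four-valuation matrix, the paper works at the single closed point $Q=(2,2)$ of the special fibre and shows that $L_1\otimes_F F_Q = F_Q(\sqrt[n]{4},\sqrt[n]{6})$ is unramified on $\widehat{A}_{Q}$ while $L_2\otimes_F F_Q \simeq F_Q(\sqrt[n]{(x-2)/3},\sqrt[n]{(y-2)/2})$ is ramified along both generators of the maximal ideal $(x-2,y-2)$, forcing $L_1\otimes_F F_Q$ and $L_2\otimes_F F_Q$---hence $L_1$ and $L_2$---to be linearly disjoint. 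This is somewhat quicker and avoids having to pin down a divisorial valuation over the horizontal locus $\{y=2\}$; conversely, your extra care in passing to a blown-up model $\XX'$ and checking that the added branches impose no new conditions makes explicit what the paper leaves to the phrase ``as in the proof of~(\ref{lgpfails})''.
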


\begin{proof} 
To show that the local-global principle fails for 
$T_{L_1 \times L_2/F}$, by (\cite[Theorem 3.6.]{HHK}) 
and as in the proof of (\ref{lgpfails}), 
it is enough to show that 
$$\displaystyle  \phi : \prod_{i = 1}^3 (T_{L_1 \times L_2/F}(F_{\pi_i}) / R) 
\times \prod_{j = 1}^3 (T_{L_1 \times L_2/F}(F_{P_j})/ R) 
\to \prod_{U_i, P_j} (T_{L_1 \times L_2/F}(F_{P_i, U_j})/R)$$ is not onto. 

 Let $U =$ Spec$R[x, y]/(xy(x+ y -1)(x-2) - t).$ 
 Then $U$ is an affine open subset of $\XX.$ 
 Let $P_1 = (1, 0),$ $P_2 = (0, 1),$ $P_3 = (0, 0)$ and $Q = (2, 2).$ 
 Let $A$ be the semi local ring at $P_1,$ $P_2,$ $P_3$ and $Q.$ 
 Let $m_i$ be the maximal ideals of $A$ corresponding to $P_i$ 
 and $m$ be the maximal ideal corresponding to $Q.$ 
 Let $\pi_1 = x,$ $\pi_2 = y$ and $\pi_3 = x+y-1.$ 
 Then $m_1 = (\pi_2, \pi_3),$ 
 $m_2 = (\pi_1 , \pi_3),$ 
 $m_3 = (\pi_1, \pi_2).$ 
 We also have $m = (x-2, y-2).$
 Since $2 \neq $ char$(\kappa),$ 
 $x-2$ and $y -2$ are units at $m_i$ and 
 $\theta_i = 1$ modulo $m_j$ and $\pi_j.$ 
 Since $n$ is coprime to char$(\kappa),$ 
 $\theta_i \in F_{P_i}^n$ and $\theta_i \in F_{\pi_j}^n$ 
 for all $i$ and $j.$ 
 Hence $L_1 \otimes_F F_{\pi_i} \simeq L_2 \otimes_F F_{\pi_i}$ 
 and $L_1 \otimes_F F_{P_j} \simeq L_2 \otimes_F F_{P_j}.$ 
 By (\ref{product_of_field}), we have 
 $T_{L_1 \times L_2/F}(F_{\pi_i}) /R \simeq T_{L_1/F}(F_{\pi_i})$/R, 
 $T_{L_1 \times L_2/F}(F_{P_j}) /R \simeq T_{L_1/F}(F_{P_j})/R$ and 
 $T_{L_1 \times L_2/F}(F_{U_i, P_j}) /R \simeq T_{L_1/F}(F_{U_i, P_j})/R.$ 
 Since, by (\ref{not-onto}), 
 $$\displaystyle  \prod_{i = 1}^3 (T_{L/F}(F_{\pi_i})/R) 
 \times \prod_{j = 1}^3 (T_{L/F}(F_{P_j})/R) 
 \to \prod_{U_i, P_j} (T_{L/F}(F_{P_i, U_j})/R)$$ 
 is not onto, $\phi$ is not onto. 
 Hence the local-global principle fails for $T_{L/F}$.

Since $\pi_1\pi_2 = xy = 4$ modulo $m$ 
and $\pi_2 \pi_3 = 6 $ modulo $m,$ 
we have $\displaystyle L_1 \otimes_F F_Q = F_Q(\sqrt[n]{4}, \sqrt[n]{6}).$ 
Since $6n$ is coprime to 
char$(\kappa),$ $L_1 \otimes_F F_Q/F_Q$ is unramified. 
Since $xy(x+y -1) = 12$ modulo $m,$ $x-2 + xy(x+y-1) = 12 a^n$ 
for some $a \in F_Q.$ Similarly, $y - 2 + xy(x+y+1) = 12 b^n$ 
for some $b \in F_Q.$ 
Hence $\displaystyle L_2 \otimes_F F_Q = F_Q(\sqrt[n]{(x-2)/3}, \sqrt[n]{(y-2)/2}).$ 
Since the maximal ideal $m = (x-2, y-2)$ and $3, 2$ are units at $m,$ 
$L_1 \otimes_F F_Q$ and $L_2 \otimes_F F_Q$ are 
linearly disjoint over $F_Q.$ In particular, 
$L_1$ and $L_2$ are linearly disjoint over $F.$
\end{proof}

\vspace{0.2in}

\textbf{Acknowledgements}:
The author thanks his advisor Prof. V. Suresh for all his
help and guidance during this work. 
The author is also very thankful to Prof. R. Parimala 
for several helpful discussions. The author thanks 
Professors J.-L. Colliot-Th\'el\`ene and R. Parimala 
for going through an earlier version of the manuscript 
and providing their helpful feedbacks. 
The author thanks the anonymous referee for the 
constructive feedback due to which the exposition has 
improved considerably. 
The author is partially supported by National Science Foundation grants 
DMS-1463882 and DMS-1801951. 
The author thanks Emory University 
and Laney Graduate School for 
excellent working conditions.

\providecommand{\bysame}{\leavevmode\hbox to3em{\hrulefill}\thinspace}

\end{document}